\documentclass[reqno]{amsart}
\usepackage{bbm}
\usepackage[margin=1in]{geometry}
\usepackage{amsmath}
\usepackage{amssymb}
\usepackage{mathtools}

\usepackage{stmaryrd}
\usepackage[mathscr]{euscript}
\usepackage{amsthm}
\usepackage{enumerate}
\usepackage[hidelinks]{hyperref}
\usepackage{caption}
\usepackage{xcolor}
\usepackage{comment}
\usepackage{appendix}
\usepackage[style=numeric]{biblatex}
\addbibresource{refs.bib}
\usepackage{cleveref}

\newtheorem{thm}{Theorem}[section]

\newtheorem{lem}[thm]{Lemma}
\newtheorem{prop}[thm]{Proposition}
\newtheorem{clm}[thm]{Claim}
\newtheorem{defn}[thm]{Definition}
\newtheorem{example}[thm]{Example}
\newtheorem{rem}[thm]{Remark}
\newtheorem{con}[thm]{Conjecture}
\newtheorem{mainthm}{Theorem}

%Blackboard bold
\newcommand\bR{{\mathbb R}}
\newcommand\bS{{\mathbb S}}

\newcommand{\bZ}{\mathbb{Z}}
\newcommand{\bN}{\mathbb{N}}
\newcommand{\scF}{\mathcal{IF}}
\newcommand{\scH}{\mathscr{H}}

\newcommand\diam{{\mathrm{diam}}}
\newcommand{\MinA}{\mathrm{MinA}}

\newcommand{\bhemi}{\partial\mathbb{S}^n_+}
\newcommand{\hemi}{\mathbb{S}^n_+}
\newcommand{\Scal}{\mathrm{Scal}}
\makeatletter

%Comments

\@namedef{subjclassname@2020}{\textup{2020} Mathematics Subject Classification}
\makeatother               

%--------------------------------------------

\title{Width Stability of Rotationally Symmetric Metrics}

\author{Hunter Stufflebeam}
\address{Hunter Stufflebeam: Department of Mathematics, University of Pennsylvania, Philadelphia, PA, USA.}
\email{hstuff(at)sas(dot)upenn(dot)edu}

\author{Paul Sweeney Jr.}
\address{Paul Sweeney Jr.: Universit\`a di Trento, Dipartimento di Matematica, via Sommarive 14, 38123 Povo di Trento, Italy}
\email{paul(dot)sweeneyjr(at)unitn(dot)it}

\begin{document}

\begin{abstract}
We prove a conjecture of Marques-Neves in \cite{S}, and several alternative formulations thereof, about the stability of the min-max width of three-spheres under the additional assumption of rotational symmetry. We can moreover extend our results to all dimensions $n\geqslant 3$. 
\end{abstract}
\subjclass[2020]{}

\maketitle
%--------------------------------------------

\vspace{-15pt}\section{Introduction}

A classical question in Riemannian geometry is how curvature controls the size and topology of a manifold. Typically, comparison and rigidity theorems exemplify this type of phenomenon.  In \cite{MN}, Marques and Neves proved such theorems for Riemannian 3-spheres under the presence of lower scalar curvature bounds and the existence of a minimal surfaces produced via min-max methods. Precisely, they show that if there is a Riemannian metric on the 3-sphere with positive Ricci curvature and scalar curvature greater than or equal to 6, then the so-called ``width'' of the metric is less than or equal to $4\pi$. Moreover, equality is attained iff the metric is isometric to the standard unit round metric on the 3-sphere.

One can naturally wonder what happens when the hypotheses of a rigidity theorem are perturbed---if a geometric object almost satisfies the hypotheses of a rigidity theorem, is the object close to an object exemplifying the rigidity statement? These types of questions are typically phrased as  ``stability'' problems. At the 2018 IAS Emerging Topics Workshop on {\emph{Scalar Curvature and Convergence}} \cite{S}, Marques and Neves conjectured the following stability theorem related to their rigidity theorem above:
\begin{con}\label{conjMN}
Fix $D,V<\infty$. Suppose $(\mathbb S^3, g_k)$, $k=1,2,\ldots$ are Riemannian 3-spheres which satisfy
\[
\mathrm{Scal}_{g_k}\geqslant 6(1-k^{-1}), \text{ }\mathrm{MinA}_{g_k}\geqslant 4\pi(1-k^{-1}), \text{ }\diam_{g_j}\left(\mathbb S^3\right) \leqslant D, \text{ and } \mathrm{Vol}_{g_k}^3\left(\mathbb S^3\right) \leqslant V. 
\] Then $(\mathbb S^3, g_k)$ converges in the Sormani--Wenger Intrinsic Flat sense to the round unit sphere $(\mathbb S^3,g_{rd})$.
\end{con}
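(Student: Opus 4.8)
The plan is to argue by contradiction and compactness, after first noting that the hypotheses are partly self-improving. By the Schoen--Yau stability computation --- the stability inequality with test function $1$, the Gauss equation, and Gauss--Bonnet --- a stable closed minimal surface in a $3$-manifold with $\Scal\geqslant 6-\eps$ is a $2$-sphere of area at most $\tfrac{8\pi}{6-\eps}$. Hence for $k$ large the bound $\MinA_{g_k}\geqslant 4\pi(1-k^{-1})$ forces $(\mathbb S^3,g_k)$ to carry \emph{no} stable minimal surface at all; this ``no stable minimal sphere'' property is the substitute for the positive Ricci hypothesis of Marques--Neves, ensuring that the area-minimizing sweepout of $(\mathbb S^3,g_k)$ produces an index-$\leqslant 1$ min--max minimal surface $\Sigma_k$, which one then wants to show is a sphere of area close to $4\pi$.

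Now suppose the conclusion fails: after passing to a subsequence there is $\delta>0$ with $d_{\mathcal F}\big((\mathbb S^3,g_k),(\mathbb S^3,g_{rd})\big)\geqslant\delta$, where $d_{\mathcal F}$ denotes intrinsic flat distance. The central step is a compactness statement with no loss of mass: extract $(\mathbb S^3,g_k)\to(X,d,T)$ with $(X,d,T)$ a $3$-dimensional integral current space satisfying $\diam(X)\leqslant D$, $\mathbf M(T)\leqslant V$, and --- crucially --- $\mathrm{set}(T)=X$, so that there is no cancellation and no lower-dimensional collapse. Under a scalar lower bound alone one cannot exclude the thin necks and splines of Gromov--Lawson and Basilio--Dodziuk--Sormani along which volume disappears in the limit; it is precisely the $\MinA$ bound that obstructs these tunneling mechanisms. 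I would make this quantitative in the spirit of Sormani's $\MinA$ program, upgrading $\MinA_{g_k}\geqslant 4\pi(1-k^{-1})$ to a uniform sliced-filling (``cinching'') estimate along near-optimal sweepouts of $(\mathbb S^3,g_k)$, and then invoke the Sormani--Wenger compactness theorem.

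It then remains to identify the limit as the round sphere, for which two features must pass to the limit: a weak scalar curvature lower bound $\Scal\geqslant 6$ on $(X,d,T)$, via a level-set or harmonic-function description of $\Scal\geqslant 6$ that is lower semicontinuous under the non-collapsed convergence above; and a weak form of $\MinA\geqslant 4\pi$, obtained by tracking the $\Sigma_k$. Here the quantitative Marques--Neves analysis (index $\leqslant 1$ together with the Schoen--Yau integral estimate), applied with $\Scal_{g_k}\geqslant 6-k^{-1}$ and $\mathrm{Area}_{g_k}(\Sigma_k)\to 4\pi$ (the area being trapped between $\MinA_{g_k}$ and the Marques--Neves upper bound), forces $\Sigma_k$ to be nearly umbilic and nearly totally geodesic with Gauss curvature close to $1$, so that the $\Sigma_k$ converge to a round minimal $2$-sphere of area $4\pi$ inside $X$. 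A weak, synthetic version of the Marques--Neves rigidity theorem --- a round area-$4\pi$ minimal $2$-sphere in a space with $\Scal\geqslant 6$ must be an equator of $(\mathbb S^3,g_{rd})$ --- then gives $(X,d,T)=(\mathbb S^3,g_{rd})$, contradicting $d_{\mathcal F}\geqslant\delta$.

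I expect the compactness-with-full-mass step to be the main obstacle: turning the $\MinA$ hypothesis into intrinsic flat precompactness with no volume loss, under only a scalar lower bound and no Ricci or $C^0$ control, sits at the frontier of the Gromov--Sormani program, and the delicate point is propagating the area bound from minimal surfaces to uniform area lower bounds on the intermediate slices of a near-optimal sweepout, so that no volume can tunnel away. A secondary difficulty is the limiting rigidity off $\Sigma$: even with the ``no stable minimal sphere'' property in hand, controlling the ambient normal geometry in the limit plausibly needs an inverse-mean-curvature or harmonic-function foliation of $X\setminus\Sigma$ anchored on the nearly round minimal sphere, with a Bishop--Gromov-type monotonicity driven by $\Scal\geqslant 6$ and the area bound as boundary data. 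A natural reduction, which I would attempt first, is to show that the hypotheses force $g_k$ to be $W^{1,p}$- or Gromov--Hausdorff-close to a rotationally symmetric warped product on $\mathbb S^3$, after which the conjecture follows from the rotationally symmetric case.
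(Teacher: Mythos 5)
The statement you are proving is \Cref{conjMN}, which is an \emph{open conjecture}: the paper does not prove it in this generality, but only under the additional hypothesis of rotational symmetry (where the argument is entirely different in character --- an ODE/coordinate analysis of the warped-product functions $V(r)$ and $f(s)$, a uniqueness lemma for the minimal leaf of the canonical sweepout, and the Lakzian--Sormani estimate to bound the $\mathcal{VIF}$ distance directly, with rigidity coming from a Hang--Wang-type conformal PDE argument). Your text is a research program for the general case, and by your own admission its central steps are not established. Concretely: (a) the ``compactness with no loss of mass'' step is not a consequence of Wenger's compactness theorem, which only yields a subsequential $\scF$-limit that may a priori be the zero current or have $\mathrm{set}(T)\subsetneq X$; turning the $\MinA$ lower bound into a uniform non-collapsing/non-cancellation statement under only a scalar curvature lower bound is precisely the open heart of the Sormani program, and no ``sliced-filling'' estimate of the kind you invoke is available. (b) There is no known lower semicontinuity of scalar curvature lower bounds under intrinsic flat convergence, nor a usable weak notion of $\Scal\geqslant 6$ on a general $3$-dimensional integral current space, so the limit space cannot be endowed with the curvature hypothesis needed for any rigidity argument. (c) The ``synthetic Marques--Neves rigidity'' you appeal to --- that a round area-$4\pi$ minimal sphere in a limit current space with weak $\Scal\geqslant 6$ forces the space to be the round sphere --- does not exist; the Marques--Neves theorem is a smooth rigidity statement, and even the quantitative claim that index-one min-max surfaces with area pinched near $4\pi$ become nearly totally geodesic with Gauss curvature near $1$ does not follow from the Schoen--Yau integral estimate alone.

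The only step of your outline that is actually substantiated is the first one (absence of stable minimal surfaces when $\Scal$ is nearly $6$ and $\MinA$ is nearly $4\pi$), which the paper also uses, via \cite[Appendix A]{MN} in the discussion of the conjecture and via the stability inequality in Lemma \ref{oneminsurf} in the symmetric setting. Your closing ``natural reduction'' --- that the hypotheses force $g_k$ to be close to a rotationally symmetric warped product --- is asserted without any mechanism and is essentially as hard as the conjecture itself; note that the paper's symmetric proof consumes the symmetry from the very start (two global coordinate systems, the canonical sweepout, explicit integration of the scalar curvature ODE in Lemmas \ref{vestimate} and \ref{festimate}), so nothing in it suggests such a reduction. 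As written, the proposal is not a proof but a (reasonable) map of the known obstructions.
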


In \Cref{conjMN}, the original condition on the width is replaced with a stronger condition on $\MinA$, which is defined for a general Riemannian $n$-manifold $\left(M^n,g\right)$ by 
\[
\MinA_g:=\inf\left\{\mathrm{Vol}_g^{n-1}(\Sigma):\Sigma \text{ }\text{is a closed minimal hypersurface in}\text{ }M\right\}.
\] In fact, since the width $W_g$ is achieved by the area of a minimal surface, it is always true that $W_g\geqslant\mathrm{MinA}_g$. \Cref{conjMN} also drops the assumption of $\mathrm{Ric}>0$; in the proof of Marques--Neves' rigidity theorem, $\mathrm{Ric}>0$ is only used to ensure that the manifold contains no stable minimal 2-spheres. By \cite[Appendix A]{MN}, we see that if the scalar curvature of a $3$-manifold is sufficiently close to 6 and $\mathrm{MinA}$ is sufficiently close to $4\pi$, then the manifold contains no stable minimal embedded surfaces. Lastly, the additionally imposed bounds on diameter and volume guarantee the existence of a subsequential limit to $\{(\mathbb S^3, g_k)\}$ in the Sormani--Wenger intrinsic flat ($\mathcal{IF}$) topology (first defined in \cite{SW}) via Wenger's compactness theorem \cite{W}. This means that the aim of \Cref{conjMN} as stated is the identification of such a subsequential limit. 

In this paper, we prove \Cref{conjMN} under the additional assumption of rotational symmetry, but without the assumption of a volume upper bound. We can even extend our results to all dimensions $n\geqslant 3$. In particular, our first result is the following, where $\mathrm{d}_{\mathcal{IF}}$ denotes the  $\mathcal{IF}$ distance: 
\begin{mainthm}\label{t: mainIF}
     Fix $\delta>0$, $D>0$. Let $(\bS^n,g)$ be a rotationally symmetric metric on the $n$-sphere. There exists an  $\varepsilon=\varepsilon(n,D,\delta)>0$ such that if
    \begin{itemize}
        \item $\mathrm{diam}_g(\mathbb S^n)\leqslant D$,
        \item $\mathrm{Scal}_g\geqslant 6(1-\varepsilon)^2$,
        \item $\mathrm{MinA}_g\geqslant\omega_{n-1}(1-\varepsilon)^{n-1}$, where $\omega_{n-1}$ is the volume of the standard unit round $(n-1)$-sphere,
    \end{itemize} then $\mathrm{d}_{\mathcal{VIF}}((\mathbb S^3, g), (\mathbb S^3, g_{rd})):=\mathrm{d}_{\mathcal{IF}}((\mathbb S^3, g), (\mathbb S^3, g_{rd}))+\lvert\mathrm{Vol}^n_g(\mathbb S^n)-\mathrm{Vol}^n_{g_{rd}}(\mathbb S^n)\rvert\leqslant\delta$.
\end{mainthm}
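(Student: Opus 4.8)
The plan is to pass to the warped--product description of $g$, turn the three hypotheses into a rigidity problem for the warping profile, and then transport $C^1$--closeness of that profile to round back into an intrinsic flat and volume estimate. Being $\mathrm{SO}(n)$--invariant, $g=dr^2+\phi(r)^2 g_{\bS^{n-1}}$ on $r\in[0,L]$ with $\phi>0$ on $(0,L)$, $\phi(0)=\phi(L)=0$, $\phi'(0)=1=-\phi'(L)$, and $\phi$ smooth at the two poles. Projecting any path between the poles onto the $r$--coordinate shows the poles are at distance exactly $L$, so $L\leqslant\diam_g(\bS^n)\leqslant D$. I will use the standard identities $\Scal_g=-2(n-1)\phi''/\phi+(n-1)(n-2)(1-(\phi')^2)/\phi^2$, and that the slice $\{r=c\}$ is minimal precisely when $\phi'(c)=0$, in which case it has $(n-1)$--volume $\omega_{n-1}\phi(c)^{n-1}$.

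\emph{Step 1: the top value and unimodality.} The maximum $M:=\max\phi$ is attained at an interior critical point $r_\ast$, so $\{r=r_\ast\}$ is a closed minimal hypersurface and $\MinA_g\leqslant\omega_{n-1}M^{n-1}$; the $\MinA$ hypothesis then forces $M\geqslant1-\eps$. At any interior local minimum $r_0$ of $\phi$ one has $\phi''(r_0)\geqslant0$, hence $\Scal_g(r_0)\leqslant(n-1)(n-2)/\phi(r_0)^2$, so the scalar hypothesis gives $\phi(r_0)^2\leqslant(n-2)/(n(1-\eps)^2)$; but $\{r=r_0\}$ is then a minimal slice, so $\phi(r_0)\geqslant1-\eps$, which for $\eps$ small (depending only on $n$) is contradictory. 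The same computation rules out flat inflection points, so $\phi$ has the single critical point $r_\ast$: it increases on $[0,r_\ast]$ and decreases on $[r_\ast,L]$.

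\emph{Step 2: a sharp two--sided ODE bound (the crux).} On either monotone arc, write $w:=(\phi')^2$ as a function of $\phi$; since $\phi''=\tfrac12\,dw/d\phi$, the scalar hypothesis becomes $\dfrac{dw}{d\phi}+\dfrac{n-2}{\phi}\,w\leqslant-n(1-\eps)^2\phi+\dfrac{n-2}{\phi}$. Multiplying by the integrating factor $\phi^{\,n-2}$ and integrating from each end of the arc (where $(\phi,w)\in\{(0,1),(M,0)\}$; the $\phi=M$ end gives the lower, the $\phi=0$ end the upper bound) yields, on each arc,
\[
(1-\eps)^2\!\left(M^{n}\phi^{2-n}-\phi^2\right)-M^{\,n-2}\phi^{2-n}+1\ \leqslant\ (\phi')^2\ \leqslant\ 1-(1-\eps)^2\phi^2.
\]
Evaluating the upper bound at $r_\ast$ gives $M\leqslant1/(1-\eps)$, so $M\to1$ as $\eps\to0$, whereupon both sides of the sandwich converge to $1-\phi^2$ -- exactly the relation satisfied by $\phi=\sin$. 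Quantitatively, $(\phi')^2$ lies within $O_n(\eps)$ of $1-\phi^2$ on $\{\phi\leqslant1-\eta\}$ for each fixed $\eta>0$, $L\to\pi$, and (using $\phi(0)=0$, $\phi'(0)=1$) $\phi\to\sin$ uniformly on $[0,\pi]$ with a rate $\eta_0=\eta_0(n,\eps)\to0$. The delicate point here is the \emph{lower} bound: it is what forbids $\phi$ from developing a long thin ``neck'' at an intermediate height, and it relies on the exact integrating factor $\phi^{\,n-2}$ making the scalar inequality integrate to an estimate that is sharp (an equality on the round sphere); the upper bound, by contrast, is the expected ``scalar $\geqslant$'' Riccati/maximum--principle bound.

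\emph{Step 3: back to $\mathcal{IF}$ and volume.} The affine matching $r\mapsto\frac{\pi}{L}r$ identifies $(\bS^n,g)$ with $(\bS^n,g_{rd})$ by a diffeomorphism under which the metrics differ in $C^0$ by $O(\eta_0)+O(|L-\pi|)=o_\eps(1)$; in particular $\lvert\vol^n_g(\bS^n)-\vol^n_{g_{rd}}(\bS^n)\rvert=o_\eps(1)$. Since the diameters are uniformly bounded, a $(1+o_\eps(1))$--biLipschitz identification of bounded Riemannian manifolds has intrinsic flat distance $o_\eps(1)$ (either by writing down an explicit filling current, or by the Lakzian--Sormani smooth--convergence estimates), so $\mathrm{d}_{\mathcal{IF}}\big((\bS^n,g),(\bS^n,g_{rd})\big)=o_\eps(1)$. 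Choosing $\eps=\eps(n,D,\delta)$ small enough that the resulting total is $\leqslant\delta$ completes the proof.
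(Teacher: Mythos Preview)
Your Steps 1 and 2 are essentially the paper's argument: the unimodality in Step 1 is the content of Lemma~\ref{oneminsurf}, and your substitution $w=(\phi')^2$ as a function of $\phi$ is exactly the paper's $V(r)$, so your integrating--factor computation reproduces Lemma~\ref{vestimate} verbatim. However, you have the region of validity backwards, and this leads to a genuine gap in Step 3.

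Look again at your lower bound
\[
(\phi')^2\ \geqslant\ \phi^{2-n}M^{n-2}\bigl[(1-\eps)^2M^2-1\bigr]+1-(1-\eps)^2\phi^2.
\]
The bracketed coefficient is $O(\eps)$ but nonpositive, so the first term is of order $\eps\cdot\phi^{2-n}$ and \emph{blows down to $-\infty$ as $\phi\to0$}. Thus the two--sided sandwich holds on $\{\phi\geqslant\eta\}$, not on $\{\phi\leqslant1-\eta\}$; the paper makes exactly this observation after the lower bound in Lemma~\ref{vestimate}. Consequently you have no control on $(\phi')^2$ near the poles, and nothing prevents $\phi$ from lingering at small values for a long $r$--interval before reaching $0$: this is precisely the ``spine formation'' the paper discusses (see the remark after Lemma~\ref{festimate} and Figure~\ref{fig:badset}). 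In particular your assertions that $L\to\pi$ and that $\phi\to\sin$ uniformly on $[0,\pi]$ are false in general --- there exist admissible $\phi$ with $L$ arbitrarily close to $D$ --- so the global $(1+o_\eps(1))$--biLipschitz identification in Step 3 does not exist. (With the extra hypothesis $\mathrm{Ric}_g\geqslant0$ one gets $\phi''\leqslant0$, spines are excluded, and your Step 3 goes through; this is Theorem~\ref{t: mainRicci}.)

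The repair is what the paper does: restrict to the band $\Omega_\eta=\{\phi\geqslant\eta\}$, on which your Step 2 does give $C^0$ Cheeger--Gromov closeness to a round band (Lemma~\ref{C0CheegerGromov}), and then invoke the Lakzian--Sormani estimate (Theorem~\ref{LakzianSormani}) with the complementary region $\{\phi<\eta\}$ handled not by metric closeness but by the volume bound $\mathrm{Vol}^n_g(\{\phi<\eta\})\leqslant\omega_{n-1}\eta^{\,n-1}D$ coming from the diameter hypothesis (Lemma~\ref{volumeconv}(2)). This is why $D$ genuinely enters $\eps(n,D,\delta)$, whereas in your write--up $D$ is never used after the first paragraph.
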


We remark that without the uniform lower bounds on $\MinA$ to prevent bubbling along the sequence as in \Cref{conjMN}, counterexamples can be constructed as shown by the second author in \cite{Swe}. These examples are rotationally symmetric and so, even with our added hypothesis of rotational symmetry here, the $\MinA$ condition is necessary. We also note that we do not need a volume bound for our proof, as it ends up being a \emph{conclusion} (see also \cite[Remark 1.5]{PTW}).

 In dimension two, M\'{a}ximo and the first author have proven an analogous stability theorem which says that strictly convex $2$-spheres, all of whose simple closed geodesics are close in length to $2\pi$, are $C^0$ Cheeger--Gromov close to the round sphere \cite{MS}. In \cite{BamlerMaximo}, Bamler and M\'aximo prove a version of \Cref{conjMN} without any symmetry assumption, but under the stronger curvature assumptions that $\mathrm{sec}>0$ and $\mathrm{Scal}\geqslant 6$ instead of solely $\mathrm{Scal}\geqslant 6$. Their result is also phrased in terms of the $C^0$ Cheeger--Gromov distance. It is interesting to note that despite the similarity of these results, all of our respective proof methods are quite different. The main techniques of \cite{MS} come from metric geometry and geometric measure theory, the techniques of \cite{BamlerMaximo} come from Ricci flow, and ours in this work principally come from the Sormani--Wenger Intrinsic Flat theory. 
 
 Furthermore, it is interesting to compare the width rigidity of Marques--Neves with Min-Oo's conjecture: if $g$ is a smooth metric on the hemisphere $\hemi$ such that the scalar curvature satisfies $\Scal_g\geq n(n-1)$, the induced metric on the boundary $\bhemi$ agrees with the standard unit round metric on $\bS^{n-1}$, and the boundary $\bhemi$ is totally geodesic with respect to $g$, then $g$ is isometric to the standard unit round metric on $\hemi$. Perhaps surprisingly, Min-Oo's conjecture turned out to be false in general with $n\geqslant 3$ due to an example of Brendle, Marques, and Neves \cite{BMN} (see also \cites{CEM,Swe2}). Nevertheless, many special cases of Min-Oo's Conjecture are known to be true---see eg. \cites{BM,HW, HW2, HLS}. In dimension $n=2$, Min-Oo's conjecture is in fact true on the nose, and is contained in an old theorem of Toponogov (see \cite{Klingenberg} Theorem 3.4.10). In \cite{St}, the first author has proven a stability version of this Min-Oo Conjecture/Theorem. 
 
 Of particular interest here is the work of Hang and Wang in \cite{HW}, wherein the authors show that: given a metric satisfying the hypotheses of Min-Oo's Conjecture which is additionally conformal to the standard unit round metric, then the conjecture holds. Using PDE techniques from the proof of this result, we are able to prove an analogous ``width rigidity theorem'' in all dimensions $n\geqslant 3$ akin to the Marques--Neves theorem. In particular, we define a notion of width $W_g$ for rotationally symmetric metrics in all dimensions $n\geqslant 3$ in Section \ref{minmax}, and show that  
 \begin{mainthm} \label{t: rigidity}
     Let $n\geqslant3$ and $g$ be a rotationally symmetric Riemannian metric on the n-sphere, $\bS^n$ such that
\begin{itemize}
    \item $\mathrm{Scal}_g\geqslant n(n-1)$,
    \item $W_g\geqslant\omega_{n-1}$. 
\end{itemize}
Then $(\mathbb S^n, g)$ is isometric to $(\mathbb S^n, g_{rd})$.
 \end{mainthm}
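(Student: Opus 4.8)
The plan is to pass to warped-product coordinates and reduce the statement to an ODE analysis of the warping function; this is the rotationally symmetric shadow of the conformal PDE method of Hang--Wang, since any rotationally symmetric metric on $\bS^n$ is conformal to $g_{rd}$ and the condition $\Scal_g\geqslant n(n-1)$ becomes a differential inequality for the conformal factor. Concretely, I would write $g=dr^2+\phi(r)^2\,g_{\bS^{n-1}}$ for $r\in[0,L]$, where $g_{\bS^{n-1}}$ is the unit round metric on $\bS^{n-1}$, $\phi>0$ on $(0,L)$, and $\phi(0)=\phi(L)=0$ with $\phi'(0)=-\phi'(L)=1$ so that $g$ extends smoothly across the two poles. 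A direct computation gives
\[
\Scal_g=(n-1)(n-2)\,\frac{1-(\phi')^2}{\phi^2}-2(n-1)\,\frac{\phi''}{\phi}.
\]

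I would then assemble two ingredients. First, multiplying the curvature formula by $\phi^{n-1}$ gives the identity
\[
\frac{d}{dr}\Bigl[\phi^{n-2}\bigl(1-(\phi')^2\bigr)\Bigr]=\frac{\Scal_g}{n-1}\,\phi^{n-1}\phi',
\]
so subtracting $\tfrac{d}{dr}[\phi^n]=n\phi^{n-1}\phi'$ and using $\Scal_g\geqslant n(n-1)$, the function $G(r):=\phi^{n-2}\bigl(1-(\phi')^2-\phi^2\bigr)$ satisfies $G'(r)=\bigl(\tfrac{\Scal_g}{n-1}-n\bigr)\phi^{n-1}\phi'$; hence $G$ is nondecreasing on every interval where $\phi$ is nondecreasing and nonincreasing on every interval where $\phi$ is nonincreasing, with $G(0)=G(L)=0$. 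Second, the sweepout of $\bS^n$ by geodesic spheres (level sets of the distance to a pole) has maximal area $\omega_{n-1}\bigl(\max_{[0,L]}\phi\bigr)^{n-1}$, so by the discussion of the width in Section~\ref{minmax} one has $W_g\leqslant\omega_{n-1}\bigl(\max_{[0,L]}\phi\bigr)^{n-1}$; thus the hypothesis $W_g\geqslant\omega_{n-1}$ forces $M:=\max_{[0,L]}\phi\geqslant 1$.

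With these in hand, I would first rule out ``dumbbell'' profiles. Fix $r_0\in(0,L)$ with $\phi(r_0)=M$; then $\phi'(r_0)=0$, so $G(r_0)=M^{n-2}(1-M^2)\leqslant 0$ because $M\geqslant 1$. If $\phi$ were not monotone on $[0,r_0]$, pick an interior local minimum $q$ (necessarily with $\phi'(q)=0$, $\phi''(q)\geqslant 0$) such that $\phi$ is monotone on $[q,r_0]$; the curvature formula then gives $n(n-1)\leqslant\Scal_g(q)\leqslant(n-1)(n-2)/\phi(q)^2$, i.e. $\phi(q)^2\leqslant(n-2)/n<1$, so $G(q)=\phi(q)^{n-2}\bigl(1-\phi(q)^2\bigr)>0$; but monotonicity of $G$ on $[q,r_0]$ forces $G(r_0)\geqslant G(q)>0$, a contradiction. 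Running the same argument on $[r_0,L]$ shows $\phi$ is unimodal: nondecreasing on $[0,r_0]$ and nonincreasing on $[r_0,L]$. Then $G\geqslant\min\{G(0),G(L)\}=0$ on all of $[0,L]$, whence $0\leqslant G(r_0)=M^{n-2}(1-M^2)\leqslant 0$, so $M=1$; and since $G$ is monotone on each of $[0,r_0]$ and $[r_0,L]$ and vanishes at both endpoints of each, $G\equiv 0$. This means $(\phi')^2=1-\phi^2$ on $(0,L)$, and integrating with $\phi(0)=0$, $\phi'(0)=1$ ($\phi'\geqslant0$ before $r_0$, $\phi'\leqslant0$ after) forces $\phi(r)=\sin r$ and $L=\pi$. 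Hence $g=dr^2+\sin^2 r\,g_{\bS^{n-1}}=g_{rd}$.

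The main obstacle is conceptual rather than computational: neither hypothesis is rigid on its own --- $\Scal_g\geqslant n(n-1)$ comes nowhere near determining $g$, and $W_g\geqslant\omega_{n-1}$ holds for many metrics --- so the real point is to isolate a single monotone quantity, here $G=\phi^{n-2}(1-(\phi')^2-\phi^2)$ (a warped-product analogue of the comparison functions behind Hang--Wang's conformal rigidity), that couples the two bounds and whose identical vanishing characterizes $\phi=\sin r$. The only ancillary points are the width computation imported from Section~\ref{minmax} and the verification --- carried out above through the estimate at an interior local minimum --- that non-unimodal (dumbbell-type) profiles are genuinely incompatible with the two hypotheses together.
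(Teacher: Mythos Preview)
Your argument is correct. The only step that needs a word of justification is the existence of the interior critical point $q$ with $\phi'(q)=0$, $\phi''(q)\geqslant 0$, and $\phi$ monotone on $[q,r_0]$: taking $q=\sup\{s\in(0,r_0):\phi'(s)<0\}$ works, and $q<r_0$ because otherwise $\phi''(r_0)=0$ would force $M^2\leqslant(n-2)/n<1$ via the scalar curvature formula, contradicting $M\geqslant 1$. With that detail filled in, every step goes through.

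Your route is genuinely different from the paper's. The paper restricts to a hemisphere, writes the metric as $u^{4/(n-2)}g_{euc}$ on the unit ball, translates $\Scal_g\geqslant n(n-1)$ and the boundary condition $f(m^*)\geqslant 1$ into the system $-\Delta u\geqslant\frac{n(n-2)}{4}u^{(n+2)/(n-2)}$ with $u\vert_{\partial B^n(1)}\geqslant 1$, and then invokes Hang--Wang's Claim~3.5 as a black box to conclude that $u$ is the standard conformal factor. You instead stay in warped-product coordinates and isolate the monotone quantity $G=\phi^{n-2}(1-(\phi')^2-\phi^2)$, whose vanishing characterizes $\phi=\sin r$. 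Your approach is entirely self-contained and transparent---no external PDE result is needed---and it also handles the ``dumbbell'' obstruction (non-unimodal $\phi$) directly rather than implicitly through Hang--Wang. It is worth noting that your identity $\frac{d}{dr}[\phi^{n-2}(1-(\phi')^2)]=\frac{\Scal_g}{n-1}\phi^{n-1}\phi'$ is exactly the integrating-factor computation the paper carries out later (Lemma~\ref{vestimate}) in the $(r,V)$ coordinates to prove the \emph{stability} theorem; so in effect you have recognized that the paper's stability machinery already contains a rigidity proof, while the paper itself takes a detour through conformal methods for the rigidity statement.
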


 Other stability theorems involving scalar curvature and rotational symmetry have been proven. Lee and Sormani investigated the intrinsic flat stability of the positive mass theorem \cite{LeS} and the Penrose inequality \cite{LeS2} under the assumption of rotational symmetry. More recently, Park, Tian, and Wang \cite{PTW} proved that given $A,D>0$ and a sequence of oriented rotationally symmetric Riemannian 3-manifolds without boundary $(M_j^3,g_j)$ such that $\diam_{g_j}\left(M_j\right)<D$, $\mathrm{Scal}_{g_j}\geq 0$, and $\mathrm{MinA}_{g_j}\geq A>0$, then there is a subsequence that converges to a metric space $(M_\infty,g_\infty)$ such that $\mathrm{Vol}^3_{g_{j_k}}(M_{j_k})$ converges to the mass $\mathbf{M}(M_\infty)$, and where $g_\infty$ is a $C^0$, $H^1$, rotationally symmetric metric. Moreover, in a certain sense $g_\infty$ has nonnegative generalized scalar curvature. Therefore, by this result, we already know that there is a subsequence in \Cref{conjMN} that $\scF$-converges to such a limit space. The novel point of the conclusion of \Cref{t: mainIF} is that the limit of such a sequence is the unit round sphere. We do not, however, appeal to the results of \cite{PTW} in our arguments here.

 Finally, we note that if we add a Ricci nonnegative hypothesis to \Cref{conjMN} then we can replace the $\MinA_g$ lower bound with a lower bound on our width $W_g$ and prove the following Gromov--Hausdorff stability theorem for rotationally symmetric metrics.

\begin{mainthm}\label{t: mainRicci}
    Fix $n\geqslant 3$ and $\delta>0$. There exists an $\varepsilon=\varepsilon(n, \delta)>0$ such that if $g$ is a rotationally symmetric metric on the $n$-sphere $\mathbb S^n$ satisfying 
    \begin{itemize}
        \item $\mathrm{Scal}_g\geqslant n(n-1)(1-\varepsilon)^2$;
        \item $\mathrm{Ric}_g\geqslant 0$;
        \item $W_g\geqslant \omega_{n-1}(1-\varepsilon)^{n-1},$ 
    \end{itemize} then $\mathrm{d}_{GH}((\mathbb S^n, g), (\mathbb S^n, g_{rd}))\leqslant\delta$.
\end{mainthm}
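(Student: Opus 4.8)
The plan is to deduce the statement from the sharp ($\varepsilon=0$) rigidity by a compactness--and--contradiction argument, whose core is a purely dimensional diameter bound; I will also indicate how the same estimates give the result quantitatively. Write the metric as $g = dr^2 + \phi(r)^2\, g_{\bS^{n-1}}$ on $[0,L]$, with $\phi>0$ on $(0,L)$, $\phi(0)=\phi(L)=0$, $\phi'(0)=-\phi'(L)=1$, and recall $\Scal_g = -2(n-1)\phi''/\phi + (n-1)(n-2)(1-(\phi')^2)/\phi^2$. First I record the elementary consequences of the hypotheses. From $\mathrm{Ric}_g(\partial_r,\partial_r)\ge 0$ one gets $\phi''\le 0$, so $\phi$ is concave and unimodal; it has no flat top (a segment with $\phi\equiv c$, $\phi''\equiv 0$ would force $c\le\sqrt{(n-2)/n}\,(1-\varepsilon)^{-1}<1-\varepsilon$ through $\Scal_g$, for $\varepsilon$ small), so $\phi$ has a unique maximum $\phi_{\max}$ at a single point $r_0$, with $\phi'>0$ on $[0,r_0)$ and $\phi'<0$ on $(r_0,L]$. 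Unwinding the definition of $W_g$ from Section \ref{minmax}, the width hypothesis reads $\phi_{\max}\ge 1-\varepsilon$. Finally, from the identity
\[
\tfrac{d}{dr}\bigl[\phi^{n-2}\bigl(1-(\phi')^2\bigr)\bigr] \;=\; \tfrac{1}{n-1}\,\Scal_g\,\phi^{n-1}\phi'
\]
and $\Scal_g\ge n(n-1)(1-\varepsilon)^2$, integrating over $[0,r_0]$ (where $\phi'\ge 0$) and reflecting via $r\mapsto L-r$ gives $1-(\phi')^2\ge(1-\varepsilon)^2\phi^2$ on all of $[0,L]$; in particular $|\phi'|\le 1$ and $1-\varepsilon\le\phi_{\max}\le(1-\varepsilon)^{-1}$.

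The main point is a \emph{uniform diameter bound}: there are $\varepsilon_0(n)>0$ and $D_0(n)<\infty$ with $\diam_g(\bS^n)\le D_0(n)$ whenever $\varepsilon\le\varepsilon_0(n)$. This step has real content, since the hypotheses impose no a priori diameter bound and $\mathrm{Ric}\ge 0$ gives no Bonnet--Myers control. Fix $\phi'_*\in\bigl(\sqrt{(n-2)/n},\,1\bigr)$. On the region $\{\phi\ge\phi'_*\}$ the scalar lower bound forces $-2(n-1)\phi\phi''\ge n(n-1)(1-\varepsilon)^2(\phi'_*)^2-(n-1)(n-2)=:\beta_0>0$ for $\varepsilon$ small, hence $-\phi''\ge\delta_0(n)>0$ there (using $\phi\le 2$). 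Let $r_2<r_0$ be the point where $\phi(r_2)=\phi'_*$. Concavity on $[r_2,r_0]$ gives $\phi'(r_2)\ge\delta_0(r_0-r_2)$ and $\phi_{\max}-\phi'_*\le\phi'(r_2)(r_0-r_2)\le\phi'(r_2)^2/\delta_0$; since $\phi_{\max}-\phi'_*\ge\eta_0(n)>0$, this yields $\phi'(r_2)\ge\sqrt{\delta_0\eta_0}=:c_1(n)>0$ and $r_0-r_2\le 1/\delta_0$. Then $\phi'\ge c_1$ on $[0,r_2]$ by concavity, so $r_2\le\phi'_*/c_1$; hence $r_0$, and by reflection $L-r_0$, and so $L$ and $\diam_g(\bS^n)\le 2L$, are bounded by a dimensional constant. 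Heuristically: concavity forbids the profile from lingering at any height, while a genuine positive scalar lower bound forbids a thin cylindrical region once the profile must still climb to height $\approx 1$ --- this is precisely where the width hypothesis is used.

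Now suppose the conclusion fails: there are $\varepsilon_k\downarrow 0$ and rotationally symmetric $(\bS^n,g_k)$ satisfying the hypotheses with $\varepsilon=\varepsilon_k$ but $\mathrm{d}_{GH}((\bS^n,g_k),(\bS^n,g_{rd}))>\delta$. By the previous paragraph the diameters are uniformly bounded and $\mathrm{Ric}_{g_k}\ge 0$, so after reparametrising (the profiles $\phi_k$ are equi-Lipschitz, uniformly bounded, concave, with $L_k$ bounded above and below) a subsequence has $\phi_k\to\phi_\infty$ uniformly, $\phi_k'\to\phi_\infty'$ a.e., and $L_k\to L_\infty$. Then $\phi_\infty$ is concave, vanishes at the endpoints, satisfies $1-(\phi_\infty')^2\ge\phi_\infty^2$ and $\max\phi_\infty=1$, and --- passing the distributional scalar inequality to the limit, the measures $\phi_k''$ having uniformly bounded mass $2$ --- satisfies $\Scal\ge n(n-1)$ distributionally. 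Writing $h:=1-(\phi_\infty')^2-\phi_\infty^2\ge 0$, the scalar inequality rearranges to $\phi_\infty''+\phi_\infty\le(n-2)h/(2\phi_\infty)$, whence $h'\ge-(n-2)(\phi_\infty'/\phi_\infty)h$ and $\bigl(h\,\phi_\infty^{\,n-2}\bigr)'\ge 0$ on $[0,r_0^\infty]$; since $h\,\phi_\infty^{\,n-2}\ge 0$ and vanishes at the maximum point $r_0^\infty$ (where $\phi_\infty=1$, $\phi_\infty'=0$), it is identically zero, so $\phi_\infty'=\sqrt{1-\phi_\infty^2}$, i.e.\ $\phi_\infty(r)=\sin r$, $r_0^\infty=\pi/2$, and (by reflection) $L_\infty=\pi$ with $\phi_\infty\equiv\sin$ on $[0,\pi]$ --- this is Theorem \ref{t: rigidity} in the rotationally symmetric case, read off the warped-product ODE. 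Since uniform convergence of warping functions together with $L_k\to\pi$ implies Gromov--Hausdorff convergence of the corresponding rotationally symmetric metrics, $(\bS^n,g_k)\to(\bS^n,g_{rd})$ in $\mathrm{d}_{GH}$, a contradiction.

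The hard part is the diameter bound: without it there is neither compactness nor any Gromov--Hausdorff estimate, and the interplay of concavity, the strict positive scalar lower bound, and the width hypothesis used there to exclude long thin necks is the genuinely new ingredient. The limit rigidity requires some care with the low regularity of $\phi_\infty$ (products of $BV$ functions and measures) but is otherwise a rerun of the proof of Theorem \ref{t: rigidity}; alternatively, carrying the estimates of the first two paragraphs through quantitatively (tracking $h_\varepsilon:=1-(\phi')^2-(1-\varepsilon)^2\phi^2$, for which $\bigl(h_\varepsilon\phi^{n-2}\bigr)'\ge 0$ and $h_\varepsilon(r_0)=O(\varepsilon)$) yields $\lVert\phi-\sin\rVert_{C^0}+\lvert L-\pi\rvert\le\omega_n(\varepsilon)\to 0$ directly, and hence the stated bound once $\varepsilon\le\varepsilon(n,\delta)$.
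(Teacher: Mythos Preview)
Your proof is correct and takes a genuinely different route from the paper's. Both arguments begin with the same essential step --- an a priori diameter bound depending only on $n$, extracted from the interplay of concavity ($\mathrm{Ric}\geqslant 0$), the strict scalar lower bound, and the width hypothesis --- but the proofs of this bound differ: the paper integrates an explicit upper barrier $f''\leqslant p(x)$ to produce an explicit constant (Proposition~\ref{diameter}), while you argue geometrically that $-\phi''$ is bounded below on the high part of the profile, forcing the profile to climb at a definite rate. Both work.

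The real divergence is after the diameter bound. The paper feeds the bounded sequence into the Lakzian--Sormani machinery (Theorem~\ref{LakzianSormani}): it builds two explicit coordinate charts $U_{k,\eta}$ and $E_k$ covering all but an $\eta$-neighborhood of the poles, proves $C^0$ Cheeger--Gromov convergence on these charts via uniform estimates on $V_k$ and $f_k$ (Lemmas~\ref{vestimate}, \ref{festimate}), estimates the distortion $\lambda$ by curve surgery, and then --- crucially using concavity --- shows the complementary pole caps have small \emph{diameter} (Lemma~\ref{hausdorffsubregion}), which is what upgrades the $\mathcal{IF}$ conclusion of Theorem~\ref{main} to a genuine GH conclusion. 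Your argument instead extracts a uniform limit $\phi_\infty$ by Arzel\`a--Ascoli and identifies it directly via the monotone quantity $h\phi^{n-2}$ (equivalently $h_\varepsilon\phi^{n-2}$ at finite $k$), whose monotonicity is a one-line consequence of the scalar ODE; this is essentially a warped-product rerun of Theorem~\ref{t: rigidity} and bypasses the Lakzian--Sormani framework entirely.

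What each approach buys: yours is shorter and more self-contained for this particular statement, and the quantitative version you sketch at the end (tracking $h_\varepsilon\phi^{n-2}\leqslant O(\varepsilon)$) gives an effective modulus without contradiction. The paper's approach is more modular --- the same chart estimates simultaneously prove the $\mathcal{VIF}$ result (Theorem~\ref{t: mainIF}) and the surgery-GH result (Theorem~\ref{t: mainDSHI}), neither of which assumes $\mathrm{Ric}\geqslant 0$ --- and isolates exactly where concavity enters (only in Lemma~\ref{hausdorffsubregion}, to kill the pole spines). Two small points worth tightening in your write-up: the distributional passage to the limit in the scalar inequality is cleaner if you work with $h_{\varepsilon_k}\phi_k^{n-2}$ at finite $k$ (where everything is smooth and the monotonicity is exact) rather than with $\phi_\infty''$ as a measure; and the final sentence ``uniform convergence of warping functions with $L_k\to\pi$ implies GH convergence'' deserves one line of justification --- it holds here because the pole regions $\{\phi_k<\eta\}$ have $g_k$-diameter $O(\eta)$ once $\phi_k\to\sin$ uniformly and $L_k\to\pi$, which is precisely the content of the paper's Lemma~\ref{hausdorffsubregion}.
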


Lastly, as a result of the proofs of the above theorems we have a third version of the stability theorem in rotational symmetry---this time with respect to a convergence studied by Dong and Song in \cite[Theorem 1.3]{DS} (see also \cite{D}) to resolve a conjecture of Huisken and Ilmanen \cite[Conjecture page 430]{HI} about the stability of the positive mass theorem. Recently, a convergence of this type also was used by Bryden and Chen for stability theorems related to tori \cite{BC}. Roughly, the idea of this convergence is to surgically remove a controlled ``bad set'' from each manifold in the sequence, so that the remainders converge in the Gromov--Hausdorff sense to the desired limit. The notion of ``bad set" varies slightly from place to place in the literature, but the unifying idea is that the ``bad set" of non-convergent points is geometrically small. In particular, we show

\begin{mainthm}\label{t: mainDSHI}
     Fix $n\geqslant 3$, $D>0$, and $\delta>0$. There exists an $\varepsilon=\varepsilon(n, \delta, D)>0$ such that if $g$ is a rotationally symmetric metric on the $n$-sphere $\mathbb S^n$ satisfying 
    \begin{itemize}
        \item $\mathrm{diam}_g(\mathbb S^n)\leqslant D$;
        \item $\mathrm{Scal}_g\geqslant n(n-1)(1-\varepsilon)^2$;
        \item $\mathrm{MinA}_g\geqslant \omega_{n-1}(1-\varepsilon)^{n-1},$ 
    \end{itemize} then there exists a smooth domain $Z\subset\mathbb S^n$ with at most two connected components satisfying \[\mathrm{Vol}_g^n(Z)+\mathrm{Vol}^{n-1}_g(\partial Z)\leqslant \delta,\] so that $\mathrm{d}_{GH}((\mathbb S^n\setminus Z, g), (\mathbb S^n, g_{rd}))\leqslant\delta$.
\end{mainthm}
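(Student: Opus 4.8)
The plan is to deduce Theorem~\ref{t: mainDSHI} as essentially a corollary of the machinery built for Theorem~\ref{t: mainIF}, reading off from the proof of that result the structure of the metric $g$ under the stated hypotheses. Since $g$ is rotationally symmetric, write $g = dr^2 + \phi(r)^2 g_{\bS^{n-1}}$ on $[0,L]$ with $\phi(0)=\phi(L)=0$ and $\phi'(0)=1$, $\phi'(L)=-1$. The constraints $\Scal_g \geq n(n-1)(1-\eps)^2$, $\MinA_g \geq \omega_{n-1}(1-\eps)^{n-1}$, and $\diam_g \leq D$ should, by the analysis underlying Theorem~\ref{t: mainIF}, force the warping function $\phi$ to be $L^\infty$-close to $\sin$ on the bulk of its domain, with the possible failure confined to one or two short ``neck'' or ``cap'' intervals where $\phi$ can dip small — exactly the places where intrinsic flat convergence tolerates collapse but Gromov--Hausdorff convergence does not. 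The set $Z$ will be the preimage under $r$ of these bad intervals, so that $Z$ is a union of at most two rotationally symmetric collar/cap domains.

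The key steps, in order, are as follows. First, I would invoke the estimates from the proof of Theorem~\ref{t: mainIF} to extract a decomposition of $[0,L]$ into a ``good'' set $G$ on which $|\phi(r) - \sin(\rho(r))| $ is small (for an appropriate reparametrization $\rho$, or after controlling $L$ close to $\pi$) together with $|\phi'|$ controlled, and a ``bad'' set $B$ consisting of at most two intervals on which $\phi$ is uniformly small, say $\phi \leq \eta(\eps)$ with $\eta \to 0$ as $\eps \to 0$. Second, I would estimate the size of $Z := r^{-1}(B)$: its volume is $\mathrm{Vol}_g^n(Z) = \omega_{n-1}\int_B \phi(r)^{n-1}\,dr \leq \omega_{n-1}\,\eta^{n-1}\,|B|$, and since $|B| \leq L \leq$ (something controlled by $D$ via the diameter bound, noting $L \leq D$), this is $\leq \delta/2$ for $\eps$ small; the boundary $\partial Z$ consists of at most two round spheres of radius $\leq \eta$, so $\mathrm{Vol}_g^{n-1}(\partial Z) \leq 2\omega_{n-1}\eta^{n-1} \leq \delta/2$. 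Third, I would show $\mathrm{d}_{GH}((\bS^n \setminus Z, g), (\bS^n, g_{rd})) \leq \delta$: on $\bS^n \setminus Z$ the warping function is uniformly close to $\sin$ and bounded below away from zero, so the intrinsic metric is uniformly close to $g_{rd}$; one then builds an explicit almost-isometry (using the rotational structure, the correspondence is the ``identity'' on the sphere factor composed with a near-isometric reparametrization of the radial variable) and estimates distortion directly, using that distances in a warped product depend continuously and uniformly on the warping function in $C^0$ on a region where $\phi$ is bounded away from $0$. I would need to be slightly careful that excising $Z$ does not create long ``detours'' — but since $Z$ sits in collar/cap regions, the complement is connected (or its components embed near-isometrically into $(\bS^n, g_{rd})$), and shortest paths in $\bS^n \setminus Z$ between points of the good region are comparable to the corresponding paths in $g_{rd}$.

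The main obstacle I expect is passing cleanly from the \emph{intrinsic flat} estimates (which only control volumes and currents, and hence tolerate thin tentacles and collapsing necks without registering them) to a \emph{pointwise} $C^0$-type control on $\phi$ over the good region, strong enough to yield Gromov--Hausdorff closeness of $\bS^n \setminus Z$. Concretely, the challenge is to rule out, or else absorb into $Z$, any intermediate-scale behavior of $\phi$ — regions where $\phi$ is neither tiny nor close to $\sin$ — using only the scalar curvature lower bound, the $\MinA$ bound, and the diameter bound; this is where the ODE/Riccati analysis of $\phi$ from the rotationally symmetric reduction (controlling $\phi''$ via $\Scal_g = -2(n-1)\phi''/\phi + (n-1)(n-2)(1-(\phi')^2)/\phi^2$) must be leveraged, together with the fact that a local minimum of $\phi$ is a minimal hypersurface whose area $\omega_{n-1}\phi_{\min}^{n-1}$ is pinched by the $\MinA$ bound, forcing every interior local minimum of $\phi$ to be either close to $1$ or arbitrarily small. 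Establishing that this dichotomy, combined with the curvature and diameter bounds, limits the bad set to at most two controlled intervals and pins $\phi$ near $\sin$ elsewhere is the crux; once it is in hand, the volume estimate for $Z$ and the Gromov--Hausdorff comparison on the complement are routine.
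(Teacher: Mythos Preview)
Your overall architecture matches the paper's: excise two polar caps $Z$, show $C^0$ Cheeger--Gromov closeness on the complement, then upgrade to Gromov--Hausdorff closeness. But two points deserve correction or sharpening.

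First, your dichotomy for interior local minima of $\phi$ is not right, and getting this straight is exactly what dissolves your ``main obstacle.'' The $\MinA$ bound forces any minimal leaf $\{r_0\}\times\bS^{n-1}$ to have $\phi(r_0)\geqslant 1-\eps$; there is no ``arbitrarily small'' branch. On the other hand, if that leaf is \emph{stable} (i.e.\ $\phi$ has a local minimum there), the stability inequality plus the traced Gauss equation and the scalar lower bound give $\phi(r_0)\leqslant (1-\eps)^{-1}\sqrt{(n-2)/n}<1-\eps$ for small $\eps$, a contradiction. Hence the canonical sweepout contains \emph{no} stable minimal leaves and, by an elementary first/second-derivative argument, exactly one minimal leaf, the global maximum of $\phi$ (this is the paper's Lemma~\ref{oneminsurf}). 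Consequently $\phi$ is strictly monotone on each hemisphere, and your feared ``intermediate-scale'' oscillations simply do not occur: the bad set is automatically two polar caps.

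Second, the paper's route to pointwise control on $\phi$ is more direct than building an almost-isometry by hand. On each hemisphere one passes to the coordinate $r=\phi(s)$ and writes $g=V(r)^{-1}dr^2+r^2g_{rd}$ with $V(r)=\phi'(s)^2$. The scalar curvature inequality becomes the linear first-order ODI $(r^{n-2}(1-V))'\geqslant n(1-\eps)^2 r^{n-1}$; integrating once from $0$ and once from the equatorial radius $R$ (where $V(R)=0$) pins $V(r)$ to $1-r^2$ up to $\Psi(\eps:\eta)$ on $[\eta,R]$ (Lemma~\ref{vestimate}). This yields uniform $C^0$ closeness of the metric tensors on $\Omega_\eta=(\eta,2-\eta)\times\bS^{n-1}$, and the paper then invokes the Lakzian--Sormani theorem (Theorem~\ref{LakzianSormani}) to convert this, together with the easy distortion and volume estimates, into the GH bound---rather than constructing an explicit $\eps$-isometry. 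Your volume estimate $\mathrm{Vol}_g^n(Z)\leqslant\omega_{n-1}\eta^{n-1}D$ and boundary estimate are exactly what the paper uses (Lemma~\ref{volumeconv}).
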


The following is an outline of the paper. In \Cref{sec: back}, we provide the necessary background and preliminaries on rotationally symmetric manifolds, min-max theory, Gromov--Hausdorff convergence, and Sormani--Wenger intrinsic flat convergence. In \Cref{sec: width rig}, we prove \Cref{t: rigidity}. In \Cref{sec: a priori}, we prove some preliminary propositions needed in the proof of the stability theorems. Finally, in \Cref{sec: stab}, we prove \Cref{t: mainIF}, \Cref{t: mainRicci}, and {\Cref{t: mainDSHI}}.

\subsection{A Comment on Notation}\label{notation}

Aside from other notation which we will introduce in the coming sections, we emphasize here that we follow tradition in using notation such as $\Psi=\Psi(x)=\Psi(x_1, x_2,\ldots : a_1,a_2,\ldots)$ to denote a non-negative function, which may change from line to line, depending on any number of variables $x_i$ and any number of parameters $a_i$ with the property that if the $a_i$ are all held fixed, $\Psi\searrow 0$ as $x_i\to 0$. 

Throughout the paper, we will also denote the Hausdorff metric by $\mathrm{d}_\mathcal H(\cdot,\cdot)$, the Gromov-Hausdorff (GH) distance by $\mathrm{d}_{GH}(\cdot,\cdot)$, the Sormani-Wenger Intrinsic Flat ($\mathcal{IF}$) distance by $\mathrm{d}_{\mathcal{IF}}(\cdot,\cdot)$, and the Volume Preserving Intrinsic Flat ($\mathcal{VIF}$) distance by $\mathrm{d}_{\mathcal{VIF}}(\cdot,\cdot)$. 

\section{Acknowledgments}
This project has received funding from the European Research Council (ERC) under the European Union’s Horizon 2020 research and innovation programme (grant agreement No. 947923). This paper was partially supported by Simons Foundation International, LTD. This work was supported in part by NSF Grant DMS-2104229 and NSF Grant DMS-2154613.

\section{Background and Preliminaries}\label{sec: back}

\subsection{Rotationally Symmetric Manifolds: An Overview}\label{overview}
Since our main objects of interest in this paper are rotationally symmetric metrics on compact manifolds, we will work in coordinates that are well suited to this particular situation. Namely, we will view such an $n$-dimensional $(M^n,g)$ as a warped product of a line segment $(0,D)$ with the round $(n-1)$-sphere, endowed with a metric
\[
g=ds^2+f(s)^2g_{rd}
\]
where $g_{rd}$ is the round metric on $\mathbb S^{n-1}$. In general, we will use $g_{rd}$ to denote the round metric on spheres of different dimensions, but which dimension is meant should always be clear from context. Here, $f(s)$ is a smooth, non-negative function on $(0,D)$. In particular, it is well known (see eg. \cite{P}) that if $f(0^+)=0$, then smoothness of the metric implies that (after smoothly extending $f$ to $s=0$): \begin{itemize}
    \item $f'(0)=1$ and $f^{(even)}(0)=0$;
    \item $f(s)>0$ if $s\in (0,D)$;
    \item if $f(D^-)=0$, then also $f'(D)=-1$ and $f^{(even)}(D)=0$. 
\end{itemize} We will be able to focus our attention in this paper on regions of $(M^n,g)$ where $f'(s)$ is nonzero and where a related coordinate system can also be utilized. In general, a rotationally symmetric manifold may be broken into parts based on the trichotomy 
\[
f'(s)<,=,>0,
\] 
where $\{f'(s)=0\}$ contains the cylindrical pieces of the manifold. On a connected component with $f'(s)\neq 0$, we may instead use $r:=f(s)$ itself as the coordinate, and consider the metric \[g=\frac{dr^2}{V(r)}+r^2g_{rd}\] for a positive smooth function $V(r)=(f'(f^{-1}(r)))^2$. We note that $V(r)$ satisfies its own appropriate derivative constraints at the endpoints of the interval, provided such a coordinate singularity point corresponds to a genuine smooth manifold point. 

Focusing our attention now on such regions where both coordinate systems are available, we record the formulas we'll need for the fundamental geometric objects of concern. Given a metric 
\begin{equation}\label{e: rotsymmetric}
    g=\frac{dr^2}{V(r)}+r^2g_{rd}=ds^2+f(s)^2g_{rd}
\end{equation}
we let $\Sigma_s:=\{s\}\times\mathbb S^{n-1}$, endowed with its induced metric. We then compute (see eg. \cite{L}):  

\begin{description}
\item[Ricci Curvature]\label{ricciformula} 
    \begin{align*}
        \mathrm{Ric}_g(\partial_s,\partial_s)=-(n-1)\frac{f''(s)}{f(s)}
    \end{align*}
\item[Scalar Curvature]\label{scalarformula} 
    \begin{align*}
        \mathrm{Scal}_g&=\frac{n-1}{r^2}\left[(n-2)(1-V(r))-rV'(r)\right]\\
        &=(n-1)(n-2)\left(\frac{1-|f'(s)|^2}{f(s)^2}-\frac{2}{n-2}\frac{f''(s)}{f(s)}\right)
    \end{align*} 
\item[Mean Curvature of $\Sigma_s$]\footnote{In the following, the mean curvature is the divergence of the normal vector field along $\Sigma_s$ pointing in the direction of $\partial_s$.}\label{mcformula} 
    \begin{align*} 
        H_g(\Sigma_r)&=(n-1)\frac{V(r)^{1/2}}{r}\\ 
        &=(n-1)\frac{f'(s)}{f(s)}
    \end{align*}
\item[Second Fundamental Form of $\Sigma_s$]\label{Aformula} 
    \begin{align*} 
        A_{ij}&=f(s)f'(s)\delta_{ij}
    \end{align*}
\item[Volume Form]\label{volformula}  
\begin{align*}d\mathrm{Vol}^n_g=\frac{r^{n-1}}{V(r)^{1/2}}\mathrm{d}\mathcal{L}^1(r)\otimes d\mathrm{Vol}^{n-1}_{g_{rd}}\end{align*}
\end{description}

We also record, for the reader's convenience, that when $g=g_{rd}$ is the round metric on an $n$-hemisphere, the coordinate representation in \eqref{e: rotsymmetric} has \begin{align*}\begin{cases} 
V_{rd}(t)=1-t^2 &\text{ on }\quad [0,1] \\
f_{rd}(s)=\sin(s) &\text{ on }\quad [0,\pi/2].\end{cases}\end{align*}

Lastly, we recall the following a priori estimate for $f'(s)$ when the scalar curvature is non-negative, whose proof is a simple application of the mean value theorem and the ODE for $\mathrm{Scal}_g$: 

\begin{lem}[\emph{cf. {\cite[Lemma 2.6]{PTW}}}]\label{ScalarLipschitz} If $\mathrm{Scal}_g\geqslant 0$, then $|f'(s)|\leqslant 1$ everywhere. Consequently, $0\leqslant V(r)\leqslant 1$. 
\end{lem}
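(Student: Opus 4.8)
The plan is to read the scalar curvature formula recorded above as a second‑order differential inequality for $f$ and feed it into an elementary maximum‑principle‑type argument by contradiction. Since $n\geqslant 3$ we have $(n-1)(n-2)>0$, so on the interior $(0,D)$, where $f>0$, the hypothesis $\mathrm{Scal}_g\geqslant 0$ is equivalent — after dividing out the factor $(n-1)$ and multiplying through by $f(s)^2$ — to the pointwise inequality
\[
f(s)\,f''(s)\ \leqslant\ \frac{n-2}{2}\left(1-f'(s)^2\right).
\]
The single observation that drives the whole argument is the following: \emph{at any interior point where $|f'(s)|>1$, the right‑hand side is strictly negative, hence $f''(s)<0$ there.}

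Next, I argue by contradiction, using that at a smooth pole $|f'|=1$; concretely $f'(0)=1$ and $f'(D)=-1$ on $\mathbb S^n$. Suppose $f'(s_0)>1$ for some $s_0\in(0,D)$, and set $a:=\sup\{\,s<s_0:\ f'(s)=1\,\}$. This set contains $0$, so $a\in[0,s_0)$ is well defined; $f'(a)=1$ by continuity, and since $f'$ cannot take the value $1$ on $(a,s_0]$ while $f'(s_0)>1$, the intermediate value theorem forces $f'>1$ throughout $(a,s_0]$. By the observation above, $f''<0$ on all of $(a,s_0]$ (which lies in the interior, so $f>0$ there), hence $f'$ is strictly decreasing on $(a,s_0]$ by the mean value theorem, and therefore $f'(s_0)<f'(a)=1$ — a contradiction. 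The case $f'(s_0)<-1$ is entirely symmetric, with $a:=\inf\{\,s>s_0:\ f'(s)=-1\,\}$ and $f'(D)=-1$ playing the roles above. Hence $|f'(s)|\leqslant 1$ everywhere, and on any region where the coordinate $r=f(s)$ is available we immediately get $0\leqslant V(r)=f'(f^{-1}(r))^2\leqslant 1$.

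I do not anticipate a genuine obstacle; the proof is completely elementary once the sign information $f''<0$ on $\{|f'|>1\}$ is in hand. The only point that calls for a little care is that $f'$ need not be monotone on $[0,s_0]$, so one cannot simply track $f'$ from the pole to $s_0$; passing instead to the maximal subinterval $(a,s_0]$ on which $|f'|>1$ holds identically is exactly what makes $f''<0$, and hence the monotonicity of $f'$ supplied by the mean value theorem, valid on the entire subinterval. (When $a=0$ the subinterval still lies in the open interval $(0,D)$, so $f>0$ there and nothing changes.)
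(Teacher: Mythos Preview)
Your proof is correct and matches the paper's indicated approach exactly: the paper only records that the lemma ``is a simple application of the mean value theorem and the ODE for $\mathrm{Scal}_g$,'' and your argument---reading $\mathrm{Scal}_g\geqslant 0$ as $f f''\leqslant \tfrac{n-2}{2}(1-(f')^2)$, extracting $f''<0$ on $\{|f'|>1\}$, and then running a mean‑value‑theorem contradiction against the pole conditions $f'(0)=1$, $f'(D)=-1$---is precisely that.
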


\subsection{Min-Max Theory}\label{minmax}

In this section we introduce the main invariant of focus -- the Simon-Smith min-max width $W^{SS}_g$ of the metric $g$ (and a high dimensional analogue for rotationally symmetric metrics). 

The modern min-max theory for constructing minimal surfaces in manifolds has its genesis in the work of Birkhoff and Lyusternik-Schnirelmann on the existence of simple closed geodesics in two-spheres \cites{BirkhoffGeodesic, LyusternikSchnirelmann}, and can be described succinctly as an extension of classical Morse theory to the area functional. Since the pioneering work of Almgren and Pitts in \cites{AlmgrenHomotopyGroups, Pitts1974, PittsThesis}, min-max theory has been at the center of a veritable industry in geometric analysis, whose rich history is impossible to fully survey here. Amongst the many groundbreaking results in the area, we highlight the work of Marques--Neves \cite{MNInfinite}, Irie--Marques--Neves \cite{IrieMarquesNeves}, and Song \cite{SongInfinite} in the resolution of Yau's conjecture in \cite{YauSeminar} about the existence of infinitely many closed, embedded, minimal surfaces in every closed $n$-manifold ($3\leqslant n\leqslant 7$), and the work \cite{MNWillmore} of Marques--Neves in the resolution of the Willmore Conjecture. 

We now turn to introducing the aspects of min-max theory relevant to us here -- those of the \emph{Simon--Smith} variant of the theory originally pioneered by Almgren--Pitts. This min-max theory, developed by Smith in \cite{SmithThesis} (see also \cite{ColdingDeLellis} for a fantastic accounting), produces a smooth and embedded minimal hypersurface in a Riemannian 3-manifold $(M^3,g)$. Consider here a Riemannian 3-sphere $(\mathbb S^3, g)$. The starting point is the construction of \emph{sweepouts}: one-parameter families of 2-spheres in $(\mathbb S^3, g)$ starting and ending at degenerate point spheres, so that the induced maps $F:\mathbb S^3\to\mathbb S^3$ have nonzero degree. The $\emph{Simon--Smith width}$ of $(\mathbb S^3, g)$ is defined to be the infimum over the areas of largest two-spheres in all such one-parameter family of two-spheres ``sweeping out'' $(\mathbb S^3, g)$. Let us give the following more precise definition (cf. \cites{ColdingDeLellis, MN}):

\begin{defn}
    Given the standard embedding of $\mathbb S^3\hookrightarrow\mathbb R^4$, consider the level sets $\overline\Sigma_t:=(x^4)^{-1}(t)$, $t\in [-1,1]$, of the coordinate function $x^4:\mathbb S^3\subset\mathbb R^4\to\mathbb R$ (more generally, one could directly work with the level sets of a given Morse function on $(\mathbb S^3,g)$). Let also $\overline\Lambda$ be the collection of all families $\left\{\Sigma_t\right\}_{t\in[-1,1]}$ with the property that $\Sigma_t=F_t(\overline\Sigma_t)$ for some smooth one-parameter family of diffeomorphisms $F_t:\mathbb S^3\to\mathbb S^3$ all smoothly isotopic to the identity $\mathrm{id}:\mathbb S^3\to\mathbb S^3$. The \emph{Simon--Smith width} of $(\mathbb S^3, g)$ is then defined to be the number \[W^{SS}_g:=\inf_{\left\{\Sigma_t\right\}\in\overline\Lambda}\left\{\sup_{t\in [-1,1]}\mathrm{Vol}^2_g\left(\Sigma_t\right)\right\}.\]
\end{defn}

One of the touchstone theorems in the Simon--Smith theory, which motivates our work in this paper, is the following result. In it, the area of the minimal sphere produced by min-max theory realizing the value $W^{SS}_g$ can be viewed as a \emph{size invariant} of $(\mathbb S^3, g)$. This is akin to the way in which the lengths of closed geodesics indicate the size and shape of positively curved two-spheres (see, for example, \cites{MS,Croke}, and Chapter 3 of \cite{Klingenberg}). 

\begin{thm}[Marques--Neves cf. \cite{MN}]\label{MN}
    Let $(\mathbb S^3, g)$ have $\mathrm{Ric}(g)>0$ and $\mathrm{Scal}_g\geqslant 6$. Then there exists an embedded minimal $\Sigma\simeq\mathbb S^2$ of Morse index one\footnote{Recall that the \emph{Morse index} of a two-sided minimal hypersurface $\Sigma^{n-1}$ in $(M^n,g)$ is the number of negative eigenvalues (counted with multiplicity) of the Jacobi operator $L^\Sigma=\Delta^\Sigma+\mathrm{Ric}_g(\nu,\nu)+|A_\Sigma|^2$, where $\Delta^\Sigma=\mathrm{div}^\Sigma\circ\nabla^\Sigma$, and $\nu$ is a unit normal to $\Sigma$.} such that \[W^{SS}_g=\mathrm{Vol}^2_g(\Sigma)\leqslant 4\pi,\] with equality possible iff $(\mathbb S^3, g)$ is isometric to the unit round sphere $(\mathbb S^3, g_{rd})$. 
\end{thm}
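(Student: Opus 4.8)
The plan is to establish \Cref{MN} in three steps: produce the min--max sphere $\Sigma$ and pin down its Morse index, bound $\mathrm{Vol}^2_g(\Sigma)$ by $4\pi$ via Hersch's balancing trick together with the Gauss equation, and then analyze the equality case.

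\emph{Step 1: existence of an index-one min--max sphere.} I would run the Simon--Smith min--max scheme for sweepouts of $(\bS^3,g)$ by $2$-spheres. By the Simon--Smith regularity theory (see \cite{SmithThesis,ColdingDeLellis}) the width $W^{SS}_g$ equals the total area, counted with positive integer multiplicities, of a smooth closed embedded minimal surface $\Gamma$, and the genus bounds for one-parameter sweepouts by spheres rule out any higher-genus component. Every embedded closed surface in the simply connected $\bS^3$ is two-sided, and the hypothesis $\mathrm{Ric}_g>0$ excludes closed stable minimal surfaces, since $Q(1,1)=-\int_\Sigma(\mathrm{Ric}_g(\nu,\nu)+|A_\Sigma|^2)\,dA<0$; standard arguments in this setting then force each component of $\Gamma$ to be a multiplicity-one minimal $2$-sphere of Morse index at least one. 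Since a one-parameter min--max surface has total Morse index at most one, there can be only one component: $\Sigma\simeq\bS^2$ is embedded and minimal, appears with multiplicity one (so $W^{SS}_g=\mathrm{Vol}^2_g(\Sigma)$), and has index exactly one.

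\emph{Step 2: the bound $W^{SS}_g\leq 4\pi$.} By uniformization fix a conformal diffeomorphism $\phi=(\phi^1,\phi^2,\phi^3)\colon\Sigma\to\bS^2\subset\bR^3$, and let $u_1>0$ be a first eigenfunction of $L^\Sigma=\Delta^\Sigma+\mathrm{Ric}_g(\nu,\nu)+|A_\Sigma|^2$. A Brouwer-degree argument (Hersch's lemma) applied to the map sending $x$ in the open unit ball to the $u_1\,dA$-center of mass of the conformal dilation of $\phi$ attracting to $x$ produces, after replacing $\phi$ by such a dilation, the balancing $\int_\Sigma u_1\,\phi^i\,dA=0$ for $i=1,2,3$. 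Because $\Sigma$ has index one, $Q(\phi^i,\phi^i)\geq0$ for each $i$. Summing over $i$ and using $\sum_i(\phi^i)^2\equiv1$, the conformal identity $\int_\Sigma\sum_i|\nabla^\Sigma\phi^i|^2\,dA=2\,\mathrm{Vol}^2_{g_{rd}}(\bS^2)=8\pi$, the trace of the Gauss equation $\mathrm{Ric}_g(\nu,\nu)+|A_\Sigma|^2=\tfrac12\mathrm{Scal}_g+\tfrac12|A_\Sigma|^2-K_\Sigma$, the hypothesis $\mathrm{Scal}_g\geq6$, and Gauss--Bonnet $\int_\Sigma K_\Sigma\,dA=2\pi\chi(\Sigma)=4\pi$, one obtains $8\pi\geq3\,\mathrm{Vol}^2_g(\Sigma)-4\pi$, i.e. $\mathrm{Vol}^2_g(\Sigma)\leq4\pi$, hence $W^{SS}_g\leq4\pi$.

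\emph{Step 3: rigidity.} If $W^{SS}_g=4\pi$, then every inequality above is an equality: $|A_\Sigma|\equiv0$ (so $\Sigma$ is totally geodesic), $\mathrm{Scal}_g\equiv6$ along $\Sigma$, and $Q(\phi^i,\phi^i)=0$ for each $i$. Since the index is exactly one, the latter forces the second eigenvalue of $-L^\Sigma$ to vanish and the $\phi^i$ to be Jacobi fields, $\Delta^\Sigma\phi^i+\mathrm{Ric}_g(\nu,\nu)\phi^i=0$; together with $\sum_i(\phi^i)^2\equiv1$ this gives $\sum_i|\nabla^\Sigma\phi^i|^2=\mathrm{Ric}_g(\nu,\nu)$ on $\Sigma$, and combining with the Gauss equation and the conformality of $\phi$ one should be able to conclude that $(\Sigma,g_\Sigma)$ is the round unit $2$-sphere with $\mathrm{Ric}_g(\nu,\nu)\equiv2$ along it. The remaining, and genuinely hard, step is to promote this first-order rigidity along $\Sigma$ to the global statement that $(\bS^3,g)$ has constant curvature $1$, hence is isometric to $(\bS^3,g_{rd})$. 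I expect this to be the main obstacle: it cannot be reduced to a Min-Oo--type boundary rigidity for the two half-balls of $\bS^3\setminus\Sigma$, since such rigidity fails for $n\geq3$ by the examples of Brendle--Marques--Neves \cite{BMN}. Instead I would feed the rigidity back into the variational picture --- studying the normal exponential flow and the family of (constant mean curvature, or minimal) surfaces near $\Sigma$, using unique continuation for the scalar-curvature and Jacobi equations to propagate the infinitesimal rigidity off $\Sigma$, and using the exact value $W^{SS}_g=4\pi$ to prevent the rigid region from terminating before it exhausts $\bS^3$. Steps 1 and 2 are, respectively, an assembly of standard Simon--Smith min--max machinery and a clean application of Hersch's trick with the Gauss equation; Step 3 is where the real difficulty lies.
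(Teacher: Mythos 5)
Theorem~\ref{MN} is quoted in the paper as background from Marques--Neves \cite{MN}; the paper contains no proof of it, so your proposal is measured against the original argument. Your Steps 1 and 2 are a faithful reconstruction of that argument: Simon--Smith min--max with $\mathrm{Ric}_g>0$ ruling out stable embedded minimal spheres, and then the Hersch balancing trick applied to the index form, combined with the traced Gauss equation, $\mathrm{Scal}_g\geqslant 6$, and Gauss--Bonnet, giving $8\pi\geqslant 3\,\mathrm{Vol}^2_g(\Sigma)-4\pi$ and hence $W^{SS}_g\leqslant 4\pi$. One caveat in Step 1: the assertion that ``a one-parameter min--max surface has total Morse index at most one'' is not an off-the-shelf fact in the Simon--Smith setting at this level of generality; in \cite{MN} the index-one and multiplicity-one conclusions are themselves established by deformation arguments that use precisely the absence of stable embedded minimal spheres, so this step requires that work rather than an appeal to a general index upper bound.

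The genuine gap is Step 3. The equality case (``$W^{SS}_g=4\pi$ iff $(\mathbb S^3,g)$ is round'') is part of the statement, and what you actually derive is only the infinitesimal rigidity along $\Sigma$: $A_\Sigma\equiv 0$, $\mathrm{Scal}_g\equiv 6$ on $\Sigma$, and the coordinates $\phi^i$ lying in the kernel of the Jacobi operator. You then acknowledge that promoting this to global rigidity is unresolved in your write-up, and the tools you gesture at are not sufficient as stated: unique continuation has nothing to continue, since the hypothesis is the inequality $\mathrm{Scal}_g\geqslant 6$ rather than an equation holding off $\Sigma$, and (as you note) a Min-Oo--type argument on the two halves is unavailable by \cite{BMN}. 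In Marques--Neves' actual proof this is where the substantive work lies: one builds a constant mean curvature foliation of a neighborhood of $\Sigma$, shows via $\mathrm{Scal}_g\geqslant 6$ and stability-type identities that nearby leaves have area strictly below $4\pi$ unless the local geometry is round, extends this family to a genuine sweepout using $\mathrm{Ric}_g>0$, and plays the resulting competitor against the min--max characterization $W^{SS}_g=4\pi$ to force constant curvature $1$. As it stands, your proposal establishes (in outline) only the inequality, not the rigidity, and so is not a complete proof of Theorem~\ref{MN}.
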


The collection $\overline\Lambda$ is commonly referred to as the \emph{saturated family of sweepouts generated by $\left\{\overline\Sigma_t\right\}$}. In the rotationally symmetric case, it is easy to see that $\overline{\Lambda}$ is also generated by the following sweepout, which we term the \emph{canonical sweepout}. It will be the focus of essentially all of our computations in the sequel.

\begin{defn}[The Canonical Sweepout]
    Let \[g=ds^2+f(s)^2g_{rd} \text{\quad with }s\in[0,D]\] be a smooth, rotationally symmetric metric on the three-sphere $\mathbb S^3$. Let $p_-$ be the ``south pole'' of the suspension, where $s=0$. The canonical sweepout is the one-parameter family $\{\Sigma_s\}_{s\in [0, D]}$ of two spheres, where $\Sigma_s:=(\mathrm{dist}_{p_-})^{-1}(s)$. Each $\Sigma_s$ has the induced metric $f(s)^2g_{rd}$, and therefore $\mathrm{Vol}^2_g(\Sigma_s)=4\pi f(s)^2$.  
\end{defn}

Of course, our theorems are stated for all dimensions $n\geqslant 3$, a generality ultimately afforded in return for the rotational symmetry assumption. However, in high dimensions $n\geqslant 4$ \emph{there is no Simon--Smith theory available}. In part, this is due to the fact that the regularity theory for minimal hypersurfaces produced from isotopy classes, which forms a key part of the Simon-Smith method, cannot prevent the presence of large singular sets, even in dimensions $4\leqslant n\leqslant 7$ (see \cite{WhiteLeastAreaDomains}). In particular there even seems to be \emph{no known higher dimensional analogue of Theorem \ref{MN}}. Therefore, to state our theorems for $n\geqslant 4$, we consider both a closely related invariant called $\mathrm{MinA}$, and a higher dimensional width analogue both of which we introduce next. 

\begin{defn}[High Dimensional ``Width'']\label{Wdef}
    Let $n\geqslant 3$, and let \[g=ds^2+f(s)^2g_{rd} \text{\quad with }s\in[0,D]\] be a smooth, rotationally symmetric metric on the $n$-sphere $\mathbb S^n$. The canonical sweepout $\{\Sigma_s\}_{s\in [0, D]}$ by $(n-1)$-spheres is defined analogously to the $n=3$ case, and we define \[W_g:=\max_{s\in [0,D]}\mathrm{Vol}^{n-1}_g(\Sigma_s).\]  
\end{defn} We will state and can prove our main results using this clearly weaker invariant (in particular, we have not \emph{infimized} over any collection of sweepouts) because the largest leaf in the canonical sweepout strongly controls the global geometry of a rotationally symmetric $(\mathbb S^n, g)$ with the appropriate curvature bounds, as we will demonstrate. The other invariant, called $\mathrm{MinA}$, is a familiar quantity appearing in stability problems in the context of the Sormani--Wenger Intrinsic Flat theory (see Section \ref{SectionSWIF}) and is defined as follows: 

\begin{defn}[MinA]
    Let $(M^n,g)$ be a Riemannian $n$-manifold. Then \[\mathrm{MinA}_g:=\inf\left\{\mathrm{Vol}_g^{n-1}(\Sigma): \Sigma\text{ is a closed minimal hypersurface in }M\right\}.\]
\end{defn} Clearly, this is the \emph{strongest} size invariant of the three that we have introduced, in the sense that a lower bound on $\mathrm{MinA}_g$ gives a lower bound on $W_g$ (in all dimensions $n\geq 3$) -- see Theorem \ref{MN} and Lemma \ref{oneminsurf}. This invariant, originally introduced by Sormani in \cite{SormaniSWIFProblems}, serves to control \emph{bubbling} and \emph{spine formation} for sequences of manifolds converging in the Intrinsic Flat sense.

We end this section with a lemma which we will find useful in the proof of our main theorem. While a general rotationally symmetric metric on the sphere may have many minimal hypersurfaces which could be either stable or unstable, such a metric with nearly extremal scalar curvature and MinA bounds cannot.

\begin{lem}\label{oneminsurf}
    Let $n\geqslant 3$. Then there exists an $\varepsilon=\varepsilon(n)>0$ such that if \[g=ds^2+f(s)^2g_{rd} \text{\quad with }s\in[0,D]\] is a smooth, rotationally symmetric metric on the $n$-sphere $\mathbb S^n$ satisfying
    \begin{itemize}
        \item $\mathrm{Scal}_g\geqslant n(n-1)(1-\varepsilon)^2$;
        \item $\mathrm{MinA}_g\geqslant \omega_{n-1}(1-\varepsilon)^{n-1}$, 
    \end{itemize} 
    then the canonical sweepout $\{\Sigma_s\}_{s\in [0, D]}$ of $(\mathbb S^n,g)$ contains exactly one minimal hypersurface, which is necessarily unstable. If we further assume that \begin{itemize}
        \item $\mathrm{Ric}_g\geqslant 0,$
    \end{itemize}
    then the same conclusion holds if we replace $\mathrm{MinA}_g$ with $W_g$. 
\end{lem}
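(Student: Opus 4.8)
The plan is to analyze minimal hypersurfaces in the canonical sweepout directly via the warping function $f$. A leaf $\Sigma_s = \{s\}\times\mathbb S^{n-1}$ is minimal if and only if $H_g(\Sigma_s) = (n-1)f'(s)/f(s) = 0$, i.e. precisely at critical points of $f$. So the first step is to show that under the hypotheses, $f$ has exactly one critical point in $(0,D)$, necessarily a nondegenerate maximum. To do this I would exploit the scalar curvature formula
\[
\mathrm{Scal}_g = (n-1)(n-2)\left(\frac{1-|f'(s)|^2}{f(s)^2} - \frac{2}{n-2}\frac{f''(s)}{f(s)}\right) \geqslant n(n-1)(1-\varepsilon)^2.
\]
At a critical point $s_0$ we have $f'(s_0)=0$, so the inequality forces $-\frac{2(n-1)}{1}\,\frac{f''(s_0)}{f(s_0)} \geqslant n(n-1)(1-\varepsilon)^2 - \frac{(n-1)(n-2)}{f(s_0)^2}\cdot 0$; more carefully, $\frac{(n-1)(n-2)}{f(s_0)^2} - 2(n-1)\frac{f''(s_0)}{f(s_0)} \geqslant n(n-1)(1-\varepsilon)^2$. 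The key point is a two-sided control on $f(s_0)$: the MinA (or $W_g$) lower bound gives $\omega_{n-1}f(s_0)^{n-1} = \mathrm{Vol}^{n-1}_g(\Sigma_{s_0}) \geqslant \omega_{n-1}(1-\varepsilon)^{n-1}$ whenever $\Sigma_{s_0}$ is minimal (it is then a closed minimal hypersurface, so the $\mathrm{MinA}$ bound applies; or it is a leaf, so the $W_g$ bound applies), hence $f(s_0)\geqslant 1-\varepsilon$. On the other hand $\mathrm{Scal}_g \geqslant 0$ and \Cref{ScalarLipschitz} give $|f'|\leqslant 1$; combined with $f(0)=0$ and the fact that near a maximum $f$ is bounded, one gets an upper bound $f(s_0)\leqslant 1 + \Psi(\varepsilon)$ by integrating the scalar curvature ODE outward from $s_0$ (the Bishop-type argument: $1-|f'|^2 \leqslant$ something, forcing $f$ not to grow too large). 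Feeding $f(s_0)\approx 1$ back into the critical-point inequality yields $f''(s_0) < 0$ strictly (for $\varepsilon$ small, $f''(s_0)/f(s_0) \leqslant \frac{1}{2}\big(\frac{n-2}{f(s_0)^2} - n(1-\varepsilon)^2\big) < 0$ since $\frac{n-2}{f(s_0)^2} < n$ when $f(s_0)$ is close to $1$). Thus every critical point of $f$ is a strict local maximum, so there can be only one — this is the crux of the uniqueness.

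Second, I must rule out minimal hypersurfaces that are \emph{not} leaves of the canonical sweepout; but the lemma only asserts that the sweepout contains exactly one minimal hypersurface, so this is automatic — any leaf that is minimal is a $\Sigma_s$, and we have shown exactly one such $s_0$ exists. (I should double-check the edge case $f(D^-)=0$ versus $f'(D^-)\neq 0$: if the manifold closes up smoothly at $s=D$ then $f'(D)=-1\neq 0$ and $D$ is not a critical point; if instead there is a totally geodesic boundary-type leaf the analysis is similar. In the closed $\mathbb S^n$ case both poles are smooth points and $f>0$ on $(0,D)$ with $f'(0)=1>0$, $f'(D)=-1<0$, so by the intermediate value theorem $f$ has at least one interior critical point, and by the above exactly one.)

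Third, instability of $\Sigma_{s_0}$: the Jacobi operator is $L = \Delta^{\Sigma_{s_0}} + \mathrm{Ric}_g(\nu,\nu) + |A_{\Sigma_{s_0}}|^2$. Testing against the constant function $\varphi\equiv 1$ gives the quadratic form value $-\int_{\Sigma_{s_0}}\big(\mathrm{Ric}_g(\nu,\nu)+|A|^2\big)$. Here $\nu = \partial_s$, and from the Ricci formula $\mathrm{Ric}_g(\partial_s,\partial_s) = -(n-1)f''(s_0)/f(s_0) > 0$ by Step 1; and $|A|^2 = (n-1)(f'(s_0))^2/f(s_0)^2 = 0$. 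So the form is strictly negative on the constant, exhibiting a negative direction, hence $\Sigma_{s_0}$ is unstable. Alternatively one can cite \cite[Appendix A]{MN} as the excerpt suggests, but the direct computation is cleaner here given the explicit formulas.

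The main obstacle I expect is the \emph{upper} bound $f(s_0)\lesssim 1$: the $\mathrm{MinA}$/$W_g$ hypothesis only gives a lower bound on the area of minimal leaves, so to get $f(s_0)$ bounded above I need a genuine comparison/Bishop argument using $\mathrm{Scal}_g\gtrsim n(n-1)$, integrating the second-order ODE for $f$ away from the maximum and using $f(0)=0$, $f'(0)=1$, $|f'|\leqslant 1$ to trap $f(s_0)$ near $1$. Getting the $\varepsilon$-dependence right (i.e. a clean $\Psi(\varepsilon)$) so that $\frac{n-2}{f(s_0)^2} < n$ holds with room to spare — and doing so uniformly without a diameter bound — is the delicate part; I suspect one first proves $f(s_0) \leqslant 1 + \Psi(n,\varepsilon)$ and $f(s_0)\geqslant 1-\varepsilon$, then notes that on the interval $[1-\varepsilon, 1+\Psi]$ the quantity $\frac{n-2}{f^2}$ is bounded away from $n$ for small $\varepsilon$, closing the argument.
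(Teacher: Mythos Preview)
Your approach is correct and in fact more direct than the paper's. The paper argues by contradiction: if there were two minimal leaves, a local-minimum leaf between them would be \emph{stable}, and then the stability inequality combined with the traced Gauss equation and the scalar lower bound forces $f(r_0)\leqslant \frac{1}{1-\varepsilon}\sqrt{(n-2)/n}$, contradicting the $\mathrm{MinA}$ lower bound for small $\varepsilon$. You instead observe that at any critical point $s_0$ the scalar inequality reads
\[
\frac{f''(s_0)}{f(s_0)}\leqslant\frac12\left(\frac{n-2}{f(s_0)^2}-n(1-\varepsilon)^2\right),
\]
and with $f(s_0)\geqslant 1-\varepsilon$ the right-hand side is negative once $(1-\varepsilon)^4>(n-2)/n$; hence every critical point is a strict local maximum and there can be only one. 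This bypasses the stability machinery entirely and gives instability of $\Sigma_{s_0}$ for free via your constant-test-function computation.

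Two corrections. First, your ``main obstacle'' is a phantom: you do \emph{not} need an upper bound on $f(s_0)$. The quantity $\frac{n-2}{f(s_0)^2}-n(1-\varepsilon)^2$ is \emph{decreasing} in $f(s_0)$, so the lower bound $f(s_0)\geqslant 1-\varepsilon$ alone makes it negative; the Bishop-type digression should be dropped. Second, in the $W_g$ case your sentence ``it is a leaf, so the $W_g$ bound applies'' is wrong as written: $W_g$ is the \emph{maximum} leaf area, so $W_g\geqslant\omega_{n-1}(1-\varepsilon)^{n-1}$ only yields $f\geqslant 1-\varepsilon$ at the global maximum, not at an arbitrary minimal leaf. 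The fix is immediate: under $\mathrm{Ric}_g\geqslant 0$ one has $f''\leqslant 0$, so $f$ is concave and its critical set is a single closed interval on which $f$ equals its maximum; thus every critical point satisfies $f(s_0)=\max f\geqslant 1-\varepsilon$, and your displayed inequality then forces $f''(s_0)<0$ there, collapsing that interval to a point.
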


\begin{proof}
Since $f(0)=f(D)=0$, there is some minimal sphere $\Sigma_{s_0}$ where $f(s_0)=\max_{s\in [0,D]} f(s)\geqslant (1-\varepsilon)$. By maximality, $f''(s_0)\leqslant 0$. For the sake of contradiction, suppose that there were another minimal sphere, say $\Sigma_{t_0}$ where without loss of generality $s_0<t_0<D$. We will obtain a contradiction by showing the existence of a stable $\Sigma_{r_0}$ of large area, which is impossible because of the lower scalar curvature bound. Indeed, if $\Sigma_{r_0}$ is any stable minimal sphere in the sweepout, then as is well known the stability inequality and traced Gauss equation yield \[\int_{\Sigma_{r_0}} \mathrm{Scal}_{\Sigma_{r_0}} -|A_{\Sigma_{r_0}}|^2 \geq \int_{\Sigma_{r_0}} \mathrm{Scal}_M.\] Applying the scalar curvature lower bound, we obtain that 

\[\frac{(n-1)(n-2)}{f(r_0)^2}\mathrm{Vol}^{n-1}_g(\Sigma_{r_0})=\int_{\Sigma_{r_0}}\mathrm{Scal}_{\Sigma_{r_0}}\geqslant n(n-1)(1-\varepsilon)^2\mathrm{Vol}^{n-1}_g(\Sigma_{r_0})\] which yields the following upper bound on the radius of $\Sigma_{r_0}$: \[f(r_0)\leqslant \frac{1}{1-\varepsilon}\sqrt{\frac{n-2}{n}}.\] Therefore, if $\mathrm{MinA}_g\geqslant\omega_{n-1}(1-\varepsilon(n))^\frac{n-1}{2}$ where $\varepsilon(n)<1-\left(\frac{n-2}{n}\right)^\frac{n-1}{2}$ is fixed, then $\{\Sigma_s\}_{s\in [0,D]}$ contains no stable minimal surfaces since \[\left(\frac{\mathrm{MinA}_g}{\omega_{n-1}}\right)^\frac{1}{n-1}\leqslant f(r_0).\] 

Thus, for $\mathrm{MinA}_g$ so large it suffices to find a stable minimal $\Sigma_{r_0}$ to obtain a contradiction. If either of $\Sigma_{s_0}$, $\Sigma_{t_0}$ is stable then we are done, and otherwise we have $f'(s_0)=f'(t_0)=0$ with $f''(s_0), f''(t_0)<0$. Then $f$ attains a local minimum at some $r_0$ in the interior of $[s_0, t_0]$ where $f'(r_0)=0$ and $f''(r_0)\geqslant 0$, so that $\Sigma_{r_0}$ is a stable minimal sphere and we may again conclude. 

In the case where $\mathrm{Ric}_g\geqslant 0$, we also have that $f''(x)\leqslant 0$ on $[0,D]$. So $f'(s_0)=0$ and $f''(s_0)\leqslant 0$. In fact, by the proof of Proposition \ref{diameter} (which is independent of this lemma) it follows that $f''(s_0)<0$. Thus, $f'(s)<0$ for every $s\in (s_0, D]$, and there are no further minimal spheres in $\{\Sigma_s\}_{s\in (s_0, D]}$. A similar analysis shows that there are none in $\{\Sigma_s\}_{s\in [0, s_0)}$, so we are done. 
\end{proof}

\subsection{Gromov--Hausdorff Convergence}
Here we will review the Gromov--Hausdorff ($\mathrm{GH}$) distance between two metric spaces. Gromov defined this distance between two metric spaces by generalizing the concept of Hausdorff distance between two subsets of a metric space. We refer the reader to \cite{Gr1} and \cite{BBI} for further details.

The Gromov-Hausdorff distance between two compact metric spaces $(X_1,d_1)$ and $(X_2,d_2)$ can be defined by
\[
d_{\mathrm{GH}}((X_1,d_1),(X_2,d_2))=\inf_{Z} \{\mathrm{d}_{\mathcal H}^Z(\phi_1(X_1),\phi_2(X_2))\}
\]
where the infimum is taken over all complete metric spaces $(Z,d^Z)$ and all distance preserving maps $\phi_i:X_i\to Z$, and where $\mathrm{d}_{\mathcal H}^Z$ denotes the standard Hausdorff distance between two compact subsets of $(Z,d^Z)$: For any compact  $X, Y\subset Z$,
\[\mathrm{d}_{\mathcal H}^Z(X,Y)=\inf\{r>0: X\subset B_r(Y)\text{ and } Y\subset B_r(X)\}\] with $B_r(\cdot)$ denoting the $r$ neighborhood of a subset of $(Z,d^Z)$. We say that a metric spaces $(X_j,d_j)$ converge in the $\mathrm{GH}$-sense to a metric space $(X_\infty,d_\infty)$ if
\[
d_{\mathrm{GH}}((X_j,d_j),(X_\infty,d_\infty)) \to 0.
\] 

We note $\mathrm{GH}$-distance defines a distance between two Riemannian manifolds since we can naturally view a Riemannian manifold as a metric space $(M,d_g)$ where $d_g$ is the induced distance function from $g$.

\subsection{Sormani--Wenger Intrinsic Flat Convergence}\label{SectionSWIF}
In this section, we will review Sormani--Wenger intrinsic flat distance between two integral current spaces. Sormani and Wenger \cite{SW} defined intrinsic flat distance, which generalizes the notion of flat distance for currents in Euclidean space. To do so they used Ambrosio and Kirchheim's \cite{AK} generalization of Federer and Fleming's \cite{FF} integral currents to metric spaces. We refer the reader to \cite{AK} for further details about currents in arbitrary metric spaces and to \cite{SW} for further details about integral current spaces and intrinsic flat distance.

Let $(Z,d^Z)$ be a complete metric space. Denote by $\mathrm{Lip}(Z)$ and $\mathrm{Lip}_b(Z)$ the set of real-valued Lipschitz functions on $Z$ and the set of bounded real-valued Lipschitz functions on $Z$. 
\begin{defn}[\cite{AK}, Definition 3.1]
We say a multilinear functional 
\[
T:\mathrm{Lip}_b(Z)\times [\mathrm{Lip}(Z)]^m\to \bR
\]
on a complete metric space $(Z,d)$ is an $m$-dimensional current if it satisfies the following properties.
\begin{enumerate}[(i)]
    \item Locality: $T(f,\pi_1,\ldots,\pi_m)=0$ if there exists an $i$ such that $\pi_i$ is constant on a neighborhood of $\{f\neq 0\}$.
    \item Continuity: $T$ is continuous with respect to pointwise convergence of $\pi_i$ such that $\mathrm{Lip}(\pi_i)\leq 1$.
    \item Finite mass: there exists a finite Borel measure $\mu$ on $X$ such that 
    \begin{equation} \label{fm}
        |T(f,\pi_1,\ldots,\pi_m)|\leq \prod_{i=1}^m \mathrm{Lip}(\pi_i) \int_Z |f| d\mu
    \end{equation}
    for any $(f,\pi_1,\ldots,\pi_m)$.
 \end{enumerate}
\end{defn}
We call the minimal measure satisfying (\ref{fm}) the mass measure of $T$ and denote it $||T||$. We can now define many concepts related to a current. $\mathbf{M}(T)=||T||(Z)$ is defined to be the mass of $T$ and the canonical set of a $m$-current $T$ on $Z$ is 
\[
\text{set}(T)=\left\{p\in Z \text{ }\Big|\text{ } \liminf_{r\to 0} \frac{||T||(B(p,r))}{r^m}>0\right\}.
\]
The support of $T$ is
\[
\mathrm{spt}\left(T\right):= \mathrm{spt} ||T|| = \left\{z\in Z:||T||\left(B_z\left(r\right)\right)>0\quad \forall r >0\right\}.
\]
Moreover, the closure of $\mathrm{set}\left(T\right)$ is $\mathrm{spt}\left(T\right)$.
The boundary of a current $T$ is defined as $\partial T:\mathrm{Lip}_b(X)\times [\mathrm{Lip}(X)]^{m-1}\to \bR$, where
\[
\partial T(f,\pi_1,\ldots,\pi_{m-1})=T(1,f,\pi_1,\ldots,\pi_{m-1}).
\]
Given a Lipschitz map $\phi:Z\to Z'$, we can pushforward a current $T$ on $Z$ to a current $\phi_\# T$ on $Z'$ by defining
\[
\phi_{\#} T(f,\pi_1,\ldots,\pi_m) = T(f\circ \phi,\pi_1\circ \phi ,\ldots,\pi_m\circ \phi).
\]
A standard example of an $m$-current on $Z$ is given by
\[
\phi_\#\left\llbracket\theta\right\rrbracket(f,\pi_1,\ldots,\pi_m) =\int_A (\theta \circ \phi) (f\circ \phi)d(\pi_1\circ \phi)\wedge \cdots \wedge d(\pi_m\circ \phi),
\]
where $\phi:A\subseteq\bR^m\to Z$ is bi-Lipschitz and $\theta\in L^1(A,\bZ)$.
We say that an $m$-current on $Z$ is integer rectifiable if there is a countable collection of bi-Lipschitz maps $\phi_i: A_i\to X$ where $A_i\subseteq \bR^m$ are precompact Borel measurable with pairwise disjoint images and weight functions $\theta_i\in L^1(A_i,\bZ)$ such that 
\[
T=\sum_{i=1}^\infty \phi_{i\#} \left\llbracket\theta_i\right\rrbracket. 
\]
Moreover, we say an integer rectifiable current whose boundary is also integer rectifiable is an integral current. We denote the space of integral $m$-currents on $Z$ as $\mathbf{I}_m(Z)$. We say that the triple $(X,d,T)$ is an $m$-dimensional integral current space if $(X,d)$ is a metric space, $T\in\mathbf{I}_m(\bar{X})$ where $\bar{X}$ is the metric completion of $X$, and $\text{set}(T)=X$. 
\begin{example}
    Let $(M^n,g)$ be a closed oriented Riemannian manifold. Then there is a naturally associated $n$-dimensional integral current space $\left(M,d_g,\left\llbracket M\right\rrbracket\right)$, where $d_g$ is the distance function induced by the metric $g$ and $\left\llbracket M \right  \rrbracket: \mathrm{Lip}_b\left(M\right)\times \left[\mathrm{Lip}\left(M\right)\right]^{n}\to \bR$ is given by 
    \[
    \left\llbracket M \right  \rrbracket=\sum_{i,j} \psi_{i\#} \left\llbracket \mathbbm{1}_{A_{ij}} \right  \rrbracket
    \]
    where we have chosen a smooth locally finite atlas $\left\{\left(U_i,\psi_i\right)\right\}_{i\in\bN}$ of $M$ consisting of positively oriented biLipschitz charts, $\psi_i: U_i\subseteq \bR^n\to M$ and $A_{ij}$ are precompact Borel sets such hat $\psi_i\left(A_{ij}\right)$ have disjoint images for all $i,j$ and cover $M$ $\scH^n$ almost everywhere. Moreover, we have $\left|\left|\left\llbracket M \right  \rrbracket\right|\right|=d\mathrm{Vol}^n_g$. 
\end{example}
The flat distance between two integral currents $T_1$, $T_2\in\mathbf{I}(Z)$ is
\[
\mathrm{d}^Z_\mathcal{F}(T_1,T_2) = \inf\{\mathbf{M}(U)+\mathbf{M}(V)\mid U\in\mathbf{I}_m(X), V\in\mathbf{I}_{m+1}(X), T_2-T_1=U+\partial V\}.
\]
The intrinsic flat $(\scF)$ distance between two integral current spaces $(X_1,d_1,T_1)$ and $(X_2,d_2,T_2)$ is
\[
\mathrm{d}_\scF((X_1,d_1,T_1),(X_2,d_2,T_2)) = \inf_Z\{\mathrm{d}_{\mathcal F}^Z(\phi_{1\#}T_1,\phi_{2\#}T_2)\},
\]
where the infimum is taken over all complete metric spaces $(Z,d^Z)$ and isometric embeddings $\phi_1:(\bar{X}_1,d_1)\to (Z,d^Z)$ and $\phi_2:(\bar{X}_2,d_2)\to (Z,d^Z)$. We note that if $(X_1,d_1,T_1)$ and $(X_2,d_2,T_2)$ are precompact integral current spaces such that 
\[
\mathrm{d}_\scF((X_1,d_1,T_1),(X_2,d_2,T_2))=0
\]
then there is a current preserving isometry between $(X_1,d_1,T_1)$ and $(X_2,d_2,T_2)$, i.e., there exists an isometry $f:X_1\to X_2$ whose extension $\bar{f}:\bar{X}_1\to \bar{X}_2$ pushes forward the current: $\bar{f}_\#T_1=T_2$. We say a sequence of $(X_j,d_j,T_j)$ precompact integral current spaces converges to $(X_\infty,d_\infty,T_\infty)$ in the $\scF$-sense if
\[
\mathrm{d}_\scF((X_j,d_j,T_j),(X_\infty,d_\infty,T_\infty))\to 0.
\]
If, in addition, $\mathbf{M}(T_i)\to \mathbf{M}(T_\infty)$, then we say $(X_j,d_j,T_j)$ converges to $(X_\infty,d_\infty,T_\infty)$ in the volume preserving intrinsic flat $(\mathcal{VIF})$ sense.

In this paper, our main tool for estimating the Gromov-Hausdorff and Intrinsic-Flat distance between two spaces is the following key result of Lakzian and Sormani in \cite{LS}. In it, two manifolds are found to be close in one of these distances if there are large diffeomorphic subregions situated similarly in each space which are themselves close in the $C^0$ Cheeger--Gromov sense (defined in the following). Specifically, they proved the following:

\begin{thm}[Lakzian--Sormani \emph{cf. \cite{LS}}]\label{LakzianSormani} Suppose that $M_1=(M, g_1)$ and $M_2=(M,g_2)$ are oriented precompact Riemannian manifolds with diffeomorphic subregions $U_i\subset M_i$ and diffeomorphisms $\psi_i:U\to U_i$ such that \[\frac{1}{(1+\varepsilon)^2}\psi_2^*g_2(v,v)\leqslant\psi_1^*g_1(v,v)\leqslant (1+\varepsilon)^2\psi_2^*g_2(v,v)\] for every $v\in TU$ (i.e. $g_1$ and $g_2$ are close in the $C^0$ Cheeger-Gromov sense). We define the following quantities: 
\begin{itemize}
    \item $D_{U_i}=\sup\{\mathrm{diam}_{M_i}(W): \text{W is a connected component of $U_i$}\}$;
    \item $a>\pi^{-1}\arccos\left((1+\varepsilon)^{-1}\right)\max\{D_{U_1}, D_{U_2}\}$;
    \item $\lambda=\sup_{x,y\in U} |d_{M_1}(\psi_1(x),\psi_1(y))-d_{M_2}(\psi_2(x),\psi_2(y))|$;
    \item $h=\sqrt{
    \lambda(\max\{D_{U_1}, D_{U_2}\}+\lambda/4)
    }$;
    \item $\overline{h}=\max\{h, \sqrt{\varepsilon^2+2\varepsilon}D_{U_1},\sqrt{\varepsilon^2+2\varepsilon}D_{U_2}\}$.
\end{itemize} Then the Gromov--Hausdorff distance between the metric completions of the $M_i$ is bounded: \[\mathrm{d}_{GH}(\overline{M_1}, \overline{M_2})\leqslant a+2\overline h+\max\left\{\mathrm{d}_H^{M_1}(U_1, M_1), \mathrm{d}_H^{M_2}(U_2, M_2)\right\}.\] Similarly, the intrinsic flat distance between the settled completions\footnote{i.e. the set of all points in the metric completion of $M_i$ with positive lower densitiy with respect to $d\mathrm{Vol}_g$.} of the $M_i$ is bounded: 
\begin{align*}
    \mathrm{d}_{\scF}(M_1', M_2')&\leqslant \left(a+2\overline h\right)\left(\mathrm{Vol}^n_{g_1}(U_1)+\mathrm{Vol}^n_{g_2}(U_2)+\mathrm{Vol}^{n-1}_{g_1}(\partial U_1)+\mathrm{Vol}^{n-1}_{g_2}(\partial U_2)\right)\\ 
    &+\mathrm{Vol}^{n}_{g_1}(M_1\setminus U_1)+\mathrm{Vol}^{n}_{g_2}(M_2\setminus U_2).
\end{align*}
\end{thm}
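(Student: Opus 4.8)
The plan is to build a single complete metric space $Z$ into which both metric completions $\bar M_1$ and $\bar M_2$ embed isometrically, and then to extract both the Gromov--Hausdorff and the intrinsic flat estimates from the geometry of $Z$. I would take $Z$ to be $M_1$ and $M_2$ glued along a ``bridge'' $C$, where $C$ is a Riemannian cylinder diffeomorphic to $U\times[0,H]$ with total height $H:=a+2\overline h$; the bottom slice $U\times\{0\}$ is identified with $U_1\subset M_1$ via $\psi_1$, the top slice $U\times\{H\}$ with $U_2\subset M_2$ via $\psi_2$, and $Z$ is endowed with the associated length metric $\mathrm d_Z$. All three asserted inequalities will come from this one construction.

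The key construction step is the metric on the bridge $C$. Since $(U,\psi_1^*g_1)$ and $(U,\psi_2^*g_2)$ are only $\eps$-bi-Lipschitz close, not isometric, one cannot glue $U_1$ to $U_2$ directly; instead the bridge interpolates between them. The bi-Lipschitz distortion is absorbed by a cone-like neck: in the model case of two concentric round $(n-1)$-spheres of radii $r$ and $r'\in[r,(1+\eps)r]$, the two are joined by a conical collar of height $\sim\sqrt{r'^2-r^2}\sim\sqrt{\eps^2+2\eps}\,r$, which is the origin of the $\sqrt{\eps^2+2\eps}\,D_{U_i}$ terms in $\overline h$, while the term $a\sim\pi^{-1}\arccos\!\big((1+\eps)^{-1}\big)\max D_{U_i}$ supplies the additional angular room needed so that an $M_i$-geodesic of length up to $D_{U_i}$ cannot be shortened by cutting up into the neck. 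The intrinsic distance discrepancy---that a pair of points of $U$ may be joined by a much shorter path in one $M_i$ than in the other, quantified by $\lambda$---is absorbed by the extra height $h=\sqrt{\lambda(\max D_{U_i}+\lambda/4)}$: any path in $Z$ that leaves $M_1$, passes through $M_2$, and returns costs at least $2H$ in the bridge direction, and since $2H\ge 2h\ge\lambda$, the inequality $\mathrm d_{M_2}(\psi_2x,\psi_2y)\ge \mathrm d_{M_1}(\psi_1x,\psi_1y)-\lambda$ then forbids such a path from short-cutting a geodesic of $M_1$. I would then verify that $\mathrm d_Z$ restricts to $\mathrm d_{M_i}$ on each $\bar M_i$ (the isometric-embedding claim) by a case analysis on where a competitor path wanders---staying in $M_i$, dipping into the neck and returning, or crossing to the other manifold---each case being excluded by one of the length-scale computations just described. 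This verification is the technical heart of the theorem.

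Granting $Z$, the Gromov--Hausdorff bound is almost immediate: every point of $M_2$ lies within $\max_i\mathrm d_{\mathcal H}^{M_i}(U_i,M_i)$ of $U_2$, and every point of $U_2\subset\partial C$ is joined to $U_1\subset M_1$ by a bridge path of length $H=a+2\overline h$, so $\mathrm d_{\mathcal H}^Z(\bar M_1,\bar M_2)\le a+2\overline h+\max_i\mathrm d_{\mathcal H}^{M_i}(U_i,M_i)$, which gives the stated $\mathrm d_{GH}$ bound. For the intrinsic flat bound I would use the same $Z$ with the isometric embeddings $\phi_i$, and exhibit an explicit filling: let $B:=\llbracket C\rrbracket$ be the bridge viewed as an $(n+1)$-dimensional integral current, let $S:=\llbracket\partial U\times[0,H]\rrbracket$ be its lateral boundary, and set $A:=\phi_{1\#}\llbracket M_1\setminus U_1\rrbracket-\phi_{2\#}\llbracket M_2\setminus U_2\rrbracket+S$, with orientations arranged so that $\phi_{2\#}\llbracket M_2\rrbracket-\phi_{1\#}\llbracket M_1\rrbracket=A+\partial B$ (this is just the identity $\partial\llbracket C\rrbracket=\llbracket U_2\rrbracket-\llbracket U_1\rrbracket-S$, rearranged). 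Then $\mathrm d_{\scF}(M_1',M_2')\le\mathbf M(A)+\mathbf M(B)$, and the bridge metric can be chosen so that each of its codimension-$0$ slices has volume at most $\mathrm{Vol}^n_{g_1}(U_1)+\mathrm{Vol}^n_{g_2}(U_2)$ and each lateral slice volume at most $\mathrm{Vol}^{n-1}_{g_1}(\partial U_1)+\mathrm{Vol}^{n-1}_{g_2}(\partial U_2)$; hence $\mathbf M(B)\le H(\mathrm{Vol}^n_{g_1}(U_1)+\mathrm{Vol}^n_{g_2}(U_2))$, $\mathbf M(S)\le H(\mathrm{Vol}^{n-1}_{g_1}(\partial U_1)+\mathrm{Vol}^{n-1}_{g_2}(\partial U_2))$, while $\mathbf M(\phi_{i\#}\llbracket M_i\setminus U_i\rrbracket)=\mathrm{Vol}^n_{g_i}(M_i\setminus U_i)$; summing these yields exactly the asserted bound with $H=a+2\overline h$.

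\textbf{Main obstacle.} The crux---and the source of the particular constants $a$ and $\overline h$---is the bridge construction: producing a genuine Riemannian metric on $U\times[0,H]$ that interpolates between two merely $\eps$-bi-Lipschitz metrics while retaining quantitative control of both the absence of internal distance shortcuts and the volumes of all (codimension $0$ and $1$) slices. Once that neck is in hand, the remaining work is bookkeeping with the triangle inequality and with the definitions of the flat and intrinsic flat distances.
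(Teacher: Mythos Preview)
The paper does not give its own proof of this statement: Theorem~\ref{LakzianSormani} is quoted from Lakzian--Sormani \cite{LS} as a black-box tool and then applied in Section~\ref{sec: stab}. So there is nothing in the present paper to compare your argument against.

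That said, your sketch is a faithful outline of how Lakzian and Sormani actually establish the result in \cite{LS}: one constructs a common ambient metric space by attaching a product-type bridge region between the diffeomorphic subregions $U_1$ and $U_2$, checks that the inclusions of the $\bar M_i$ are distance-preserving (this is where the precise constants $a$, $h$, and $\overline h$ arise), reads off the Gromov--Hausdorff bound from the Hausdorff distance in the ambient space, and reads off the intrinsic flat bound from the mass of the bridge and its lateral boundary together with the excess regions $M_i\setminus U_i$. Your identification of the isometric-embedding verification as the technical heart is correct, and your decomposition $\phi_{2\#}\llbracket M_2\rrbracket-\phi_{1\#}\llbracket M_1\rrbracket=A+\partial B$ with the stated mass bounds is exactly the mechanism behind the $\scF$ estimate. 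One small caveat: in \cite{LS} the bridge is not literally a smooth Riemannian cylinder interpolating between $\psi_1^*g_1$ and $\psi_2^*g_2$; rather, the construction is carried out at the level of metric spaces (a ``hemispherical bridge''), which sidesteps the issue of producing a smooth interpolating metric that you flag as the main obstacle. The constants you interpret geometrically do indeed trace back to the cone/hemisphere geometry you describe.
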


\section{A Width Rigidity Theorem in all Dimensions \'a la Marques-Neves}\label{sec: width rig}

In this section, we prove a version of Marques-Neves' Theorem \ref{MN} for rotationally symmetric manifolds in \emph{all dimensions} $n\geqslant 3$. We emphasize that Theorem \ref{MN} is manifestly a \emph{three-dimensional} result, on account of the fact that it concerns the Simon--Smith width which has no known higher dimensional analogue in generality (cf. Section \ref{minmax}). Rotational symmetry, however, allows us to introduce the related, weaker invariant $W_g$ of Definition \ref{Wdef}, which is clearly defined for every $n\geqslant 3$. With this extended definition of ``width'', we can state and prove a rotationally symmetric version of Theorem \ref{MN} which is valid in all dimensions $n\geqslant 3$, and moreover requires \emph{no} assumption on the Ricci curvature. The following is a restatement of \Cref{t: rigidity}:
\begin{thm}[Rigidity of the Width in Rotational Symmetry]
Let $n\geq3$ and $g$ be a rotationally symmetric Riemannian metric on the n-sphere, $\bS^n$ such that
\begin{itemize}
    \item $\mathrm{Scal}_g\geqslant n(n-1)$;
    \item $W_g\geqslant\omega_{n-1}$,
\end{itemize} where $W_g$ is the Simon--Smith width when $n=3$, and is as in Definition \ref{Wdef} for $n\geqslant 4$. Then $(\mathbb S^n, g)$ is isometric to $(\mathbb S^n, g_{rd})$.
\end{thm}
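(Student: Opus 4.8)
The plan is to work entirely with the profile function, writing $g=ds^2+f(s)^2g_{rd}$ with $f(0)=f(D)=0$, $f>0$ on $(0,D)$, and $f'(0)=1=-f'(D)$ (from smoothness at the two poles). Since each slice has $\mathrm{Vol}^{n-1}_g(\Sigma_s)=\omega_{n-1}f(s)^{n-1}$, the width hypothesis $W_g\geqslant\omega_{n-1}$ forces $\max_s f(s)\geqslant 1$ (for $n\geqslant 4$ directly from Definition \ref{Wdef}, and for $n=3$ because the canonical sweepout lies in $\overline\Lambda$, so $W_g\leqslant \max_s\mathrm{Vol}^2_g(\Sigma_s)$). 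The scalar curvature formula turns $\Scal_g\geqslant n(n-1)$ into the pointwise differential inequality
\[
2f(s)f''(s)\leqslant (n-2)\bigl(1-f'(s)^2\bigr)-nf(s)^2 .
\]
In particular, wherever $f(s)^2>\tfrac{n-2}{n}$ one has $2ff''<(n-2)-nf^2<0$, hence $f''<0$ there: the profile is strictly concave on $\{f>\sqrt{(n-2)/n}\}$.

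The main device is the first-order quantity
\[
\phi(s):=f(s)^{n-2}\bigl(1-f(s)^2-f'(s)^2\bigr),
\]
which in the coordinate $r=f(s)$ equals $r^{n-2}\bigl(1-V(r)-r^2\bigr)$, i.e.\ it measures the deviation of $V$ from the round profile $1-r^2$. Differentiating $P:=f^2+(f')^2$ and feeding in the displayed inequality gives $\phi'\geqslant 0$ on every interval where $f'>0$, $\phi'\leqslant 0$ on every interval where $f'<0$, and $\phi$ constant (and nonnegative, since such a level $c$ satisfies $c\leqslant\sqrt{(n-2)/n}<1$) on any cylindrical piece $\{f\equiv c\}$. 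Moreover $\phi(0)=\phi(D)=0$, because $f$ vanishes at the poles.

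Now let $s_0$ attain $\max_s f=:M\geqslant 1$, so $f'(s_0)=0$; since $f(s_0)\geqslant 1>\sqrt{(n-2)/n}$, the concavity observation makes $s_0$ a strict local maximum, and $f$ is strictly increasing on some maximal interval $(\alpha,s_0)$. I claim $\alpha=0$: otherwise $f'(\alpha)=0$ with $f'\geqslant 0$ just to the right of $\alpha$, so $f''(\alpha)\geqslant 0$; but monotonicity of $\phi$ on $(\alpha,s_0)$ gives $\phi(\alpha)\leqslant\phi(s_0)=M^{n-2}(1-M^2)\leqslant 0$, and since $f'(\alpha)=0$ this forces $f(\alpha)\geqslant 1>\sqrt{(n-2)/n}$, whence $f''(\alpha)<0$ — a contradiction. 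Therefore $f'>0$ on $(0,s_0)$, $\phi$ is non-decreasing there, and $0=\phi(0^+)\leqslant\phi\leqslant\phi(s_0)\leqslant 0$, so $\phi\equiv 0$ on $[0,s_0]$. This yields $M=1$ (from $\phi(s_0)=0$) and $f'=\sqrt{1-f^2}$ on $(0,s_0)$, hence $f(s)=\sin s$ and $s_0=\pi/2$. Running the identical argument on the reflected profile $s\mapsto f(D-s)$ gives $D=\pi$ and $f\equiv\sin$ on $[\pi/2,\pi]$ as well, so $f(s)=\sin s$ throughout $[0,\pi]$ and $g=g_{rd}$.

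The substantive step — and the one I expect to require the most care — is the identification and use of $\phi$: everything hinges on the fact that $\phi$ vanishes at $s=0$ (the pole) \emph{and} at $s=s_0$ (where $f'=0$ and, once $M=1$ is known, $f=1$), so monotonicity pins it to zero on the whole cap. The ``concavity where $f$ is large'' estimate is what localizes the argument correctly — in particular it excludes the a priori possibility that $\max f$ is reached only after preliminary oscillations of the profile — and dealing with the (harmless) cylindrical pieces of a general rotationally symmetric metric is the remaining, purely bookkeeping, point.
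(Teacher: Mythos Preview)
Your argument is correct and takes a genuinely different route from the paper's. The paper restricts to the hemisphere $[0,m^*]\times\mathbb S^{n-1}$, observes that this is conformal to the Euclidean ball $B^n(1)$ with conformal factor $u$, and then feeds the scalar curvature bound and the boundary condition $u\vert_{\partial B^n(1)}\geqslant 1$ (coming from $f(m^*)\geqslant 1$) into a semilinear elliptic inequality $-\Delta u\geqslant \tfrac{n(n-2)}{4}u^{(n+2)/(n-2)}$; the rigidity is then read off from Claim~3.5 of Hang--Wang \cite{HW}, which identifies $u$ as the round conformal factor. Your approach stays entirely at the ODE level: the monotone quantity $\phi(s)=f^{n-2}(1-f^2-(f')^2)$ is, in the $r=f(s)$ coordinate, exactly $r^{n-2}(1-V(r))-r^n$, and its monotonicity is precisely the integrating-factor computation $(r^{n-2}(1-V))'\geqslant nr^{n-1}$ that the paper itself uses later in Lemmas~\ref{Rk} and~\ref{vestimate} for the \emph{stability} estimates. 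So you have, in effect, noticed that the paper's stability machinery already contains a self-contained rigidity proof, bypassing the appeal to \cite{HW}. The gain is that your argument is elementary and requires no external PDE input; the paper's route, on the other hand, makes the structural link to Min--Oo's conjecture and the Hang--Wang hemisphere rigidity explicit, which is part of the narrative. Your handling of the possibility $\alpha>0$ via the strict concavity on $\{f>\sqrt{(n-2)/n}\}$ is the one genuinely new ingredient relative to what the paper writes down, and it is clean.
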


\begin{proof}
Since $(\mathbb S^n,g)$ is rotationally symmetric, we can write it in the isometric form 
\[
\left([0,1]\times \mathbb S^{n-1}, ds^2+f(s)^2g_{rd}\right)
\]
where $f:[0,1]\to \mathbb R$ is a non-negative smooth function such that $f(0)=f(1)=0$ and $f'(0)=-f'(1)=1$. Therefore, we know that $f(s)$ obtains an interior maximum at some point $m^*\in(0,1)$, and by the definition of $W_g$ for each $n\geqslant 3$, we have that 
\[
\omega_{n-1}\leqslant W_g \leqslant \omega_{n-1}f(m^*)^{n-1}.
\]
Now, consider the restricted metric $\overline g=ds^2+f(s)^2g_{rd}$ on the hemisphere $\mathbb S^n_+=[0,m^*]\times \mathbb S^{n-1}$, and note that $\mathrm{Scal}_{\overline{g}} \geqslant n(n-1)$. Furthermore, $\partial \mathbb S^n_+$ is totally geodesic with induced metric $f(m^*)^2g_{rd}$. By the above observation, $f(m^*)\geqslant 1$ so we also obtain $\overline{g}\geqslant g_{rd}$ along $\partial\mathbb S^n_+$. 

Next, note that by virtue of its rotational symmetry, $\overline g$ is conformal to the Euclidean metric on the Euclidean $n$-ball of radius $1$, $B^n(1)$. In particular, $(\mathbb S^n_+, \overline g)$ is isometric to $\left(B^n(1), \tilde g:= u^\frac{4}{n-2}g_{euc}\right)$ for some smooth function $u:B^n(1)\to\mathbb R^{>0}$. As is well known, the conformal factor $u$ obeys the Yamabe equation
\[
\mathrm{Scal}_g = \mathrm{Scal}_{\tilde g} = u^{-\frac{n+2}{n-2}}\left(-\frac{4(n-1)}{n-2} \Delta u\right)
\] 
where $\Delta$ is the standard Euclidean Laplacian. Moreover, the fact that $\tilde g\geqslant g_{rd}$ along the boundary implies that $u\vert_{\partial B_1^n}\geqslant 1.$ Therefore, since $\mathrm{Scal}_g\geqslant n(n-1)$, $u$ satisfies the PDE problem \begin{align*} \label{scalpde}
    \begin{cases}
        - \Delta u \geq\frac{n(n-2)}{4} u^{\frac{n+2}{n-2}} \qquad &\text{in }  B^n(1)\\
        u\geqslant 1 \qquad &\text{on }  \partial B^n(1),
    \end{cases}
\end{align*} and by Claim 3.5 of Hang--Wang in \cite{HW}, the only possible solution is \[u(x)=\left(\frac{2}{1+|x|^2}\right)^{\frac{n-2}{2}}.\] This is exactly the conformal factor of the standard round metric $g_{rd}$ on $\mathbb S^n_+$ with respect to $g_{euc}$ on $B^n(1)$, and applying the same argument to the other hemisphere of $(\mathbb S^n, g)$ thus establishes the Theorem. 
\end{proof}

\section{A Priori Diameter Estimate}\label{sec: a priori}

In this section we prove an explicit a priori upper bound for the diameter of a rotationally symmetric $n$-sphere with positive scalar curvature, non-negative Ricci curvature, and width bounded below by a dimensional constant. 

\begin{prop}\label{diameter}
    Let $n\geqslant 3$, $\Lambda>0$, $w_0>\sqrt{\frac{(n-1)(n-2)}{\Lambda}}\coloneqq 1/\sqrt{\tilde \Lambda}$. Then there exists a $D_0=D_0(n, \Lambda, w_0)<\infty$ such that if \[g=ds^2+f(s)^2g_{rd} \text{\quad with }s\in[0,D]\] is a smooth, rotationally symmetric metric on the $n$-hemisphere $\mathbb S^n_+$ satisfying
    \begin{itemize}
        \item $\mathrm{Scal}_g\geqslant \Lambda>0$;
        \item $\mathrm{Ric}_g\geqslant 0$;
        \item $f(0)\geqslant w_0>0$, $f'(0)=0$, and $f(D)=0$, 
    \end{itemize} 
    then $D\leqslant D_0$. In particular, if $(\mathbb S^n, g)$ is rotationally symmetric and satisfies the above curvature conditions with $W_g\geqslant\omega_{n-1}w_0^{n-1}$, then $\mathrm{diam}_g\leqslant 2D_0$. 
    
    Moreover, one may take the explicit value
    \[
    D_0=\frac{2nw_0}{n-2}\cdot\frac{1}{\tilde{\Lambda}w_0^2-\log\left(\tilde\Lambda w_0^2\right)-1}.
    \]
\end{prop}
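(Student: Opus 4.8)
The plan is to realize $D$ as an explicit integral in the warping function and then bound that integral using a monotone ``energy.'' First I would record the basic structure. Writing $g=ds^2+f(s)^2g_{rd}$, the hypothesis $\mathrm{Ric}_g\ge 0$ forces $f''\le 0$ (via the formula for $\mathrm{Ric}_g(\partial_s,\partial_s)$), and since $f'(0)=0$ this makes $f$ concave and non-increasing on $[0,D]$; moreover $f$ cannot be constant on any non-degenerate subinterval meeting $s=0$, since that would be a cylinder of radius $f(0)\ge w_0>1/\sqrt{\tilde\Lambda}$ with $\mathrm{Scal}_g=(n-1)(n-2)/f(0)^2<\Lambda$. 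Hence $f$ is strictly decreasing from $w\coloneqq f(0)=\max f\ge w_0$ to $f(D)=0$, so in the coordinate $r=f(s)$ on $(0,D)$, using $ds=dr/\sqrt{V(r)}$ with $V=(f')^2\circ f^{-1}$, one gets $D=\int_0^w dr/\sqrt{V(r)}$. Here $V(0)=1$ (smoothness at the pole $s=D$), $V(w)=0$, and $V'=2f''\le0$, so $V$ is non-increasing; integrating the inequality form $\frac{d}{dr}(r^{n-2}V)\le(n-2)r^{n-3}(1-\tilde\Lambda r^2)$ of $\mathrm{Scal}_g\ge\Lambda$ from $0$ (using $|f'|\le 1$, so $r^{n-2}V\to0$) gives $V(r)\le 1-\frac{(n-2)\tilde\Lambda}{n}r^2$, hence the a priori bound $\tilde\Lambda w^2\le\frac{n}{n-2}$ (so the statement is vacuous unless $\tilde\Lambda w_0^2\le\frac{n}{n-2}$).

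The crucial step is a monotone energy. From $\mathrm{Scal}_g\ge\Lambda$ and $(f')^2\ge0$ one gets $f''\le-\frac{n-2}{2}\big(\tilde\Lambda f-\frac1f\big)$; multiplying by $f'\le0$ and collecting total $s$-derivatives shows that $E(s)\coloneqq (f'(s))^2+(n-2)\big(\frac{\tilde\Lambda}{2}f(s)^2-\log f(s)\big)$ satisfies $E'\ge0$. Therefore $E(s)\ge E(0)=(n-2)\big(\frac{\tilde\Lambda}{2}w^2-\log w\big)$, which rearranges to the pointwise lower bound $V(r)\ge(n-2)\,\phi(r/w)$, where $\phi(t)\coloneqq\frac{x}{2}(1-t^2)+\log t$ and $x\coloneqq\tilde\Lambda w^2$. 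The function $\phi$ is strictly concave on $(0,\infty)$, vanishes at $t=1$, and attains its maximum $M\coloneqq\phi(x^{-1/2})=\frac12(x-1-\log x)>0$ at $t=x^{-1/2}$, i.e.\ at $r=1/\sqrt{\tilde\Lambda}$; in particular $V(1/\sqrt{\tilde\Lambda})\ge(n-2)M$.

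Next I would estimate $D=\int_0^w dr/\sqrt{V}$ by splitting at $r_1\coloneqq 1/\sqrt{\tilde\Lambda}\in(0,w)$. On $[0,r_1]$ monotonicity of $V$ gives $V\ge V(r_1)\ge(n-2)M$, so that part contributes at most $\frac{1}{\sqrt{\tilde\Lambda}\,\sqrt{(n-2)M}}$. On $[r_1,w]$, concavity of $\phi$ between its maximum at $x^{-1/2}$ and its zero at $1$ yields the chord bound $\phi(t)\ge M\frac{1-t}{1-x^{-1/2}}$, hence $V(r)\ge\frac{(n-2)M}{w(1-x^{-1/2})}(w-r)$; integrating $(w-r)^{-1/2}$ and using $w-r_1=w(1-x^{-1/2})$ bounds that part by $\frac{2w(1-x^{-1/2})}{\sqrt{(n-2)M}}$. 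Adding and using $1/\sqrt{\tilde\Lambda}=wx^{-1/2}$ gives $D\le\frac{w(2-x^{-1/2})}{\sqrt{(n-2)M}}\le\frac{2\sqrt2\,w}{\sqrt{(n-2)(x-1-\log x)}}$. This bound is decreasing in $w$ (its square has $w$-derivative with the sign of $(x-1-\log x)-x(1-\frac1x)=-\log x<0$), so since $w\ge w_0$ it is at most its value at $w_0$; writing $P_0\coloneqq \tilde\Lambda w_0^2-1-\log(\tilde\Lambda w_0^2)$ and using $(n-2)P_0<2\le\frac{n^2}{2}$ (from the a priori bound), one concludes $D\le\frac{2\sqrt2\,w_0}{\sqrt{(n-2)P_0}}\le\frac{2nw_0}{(n-2)P_0}=D_0$. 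For the ``in particular'' claim, apply this to each of the two monotone pieces of $(\mathbb S^n,g)$ on either side of a maximum point $m^*$ of $f$ (where $f(m^*)=\max f\ge w_0$ because $W_g\ge\omega_{n-1}w_0^{n-1}$): each piece has $s$-length $\le D_0$, so the total $s$-length is $\le 2D_0$, and for any warped product metric on $\mathbb S^n$ of total $s$-length $L$ one has $\mathrm{diam}_g\le L$ (join any two points by a meridian path through a suitable pole).

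The step I expect to be the main obstacle is producing the monotone quantity $E$: one must exploit $\mathrm{Scal}_g\ge\Lambda$ together with the boundary condition $f'(0)=0$ (equivalently $V(w)=0$), and it is precisely the $\log f$ term entering $E$ that is responsible for the logarithm appearing in $D_0$. Once $E$ is in hand the remaining integral estimate and the monotonicity-in-$w$ bookkeeping are routine but somewhat fiddly; some care is also needed near $r=w$, where $V$ vanishes linearly and the integrand has an integrable singularity handled by the concavity chord bound.
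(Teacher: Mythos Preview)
Your argument is correct, but it takes a genuinely different route from the paper's. The paper proceeds directly in the $s$-variable: from concavity and the boundary data it uses the chord bound $f(s)\ge(1-s/D)w_0$, plugs this into the scalar curvature inequality to get a pointwise upper bound $f''(s)\le p(s)$ with $p$ explicit, identifies the interval $[0,\delta]$ (with $\delta=D(1-1/(w_0\sqrt{\tilde\Lambda}))$) on which $p<0$, and then simply integrates $-1=\int_0^D f''\le\int_0^\delta p$, which is already linear in $D$ and yields the stated $D_0$ after one elementary computation. Your approach instead passes to the $r$-variable, realizes $D=\int_0^w dr/\sqrt{V}$, and manufactures the monotone energy $E(s)=(f')^2+(n-2)\bigl(\tfrac{\tilde\Lambda}{2}f^2-\log f\bigr)$ to produce a pointwise lower bound $V(r)\ge(n-2)\phi(r/w)$; the concavity chord for $\phi$, the split at $r_1=1/\sqrt{\tilde\Lambda}$, and the monotonicity in $w$ then close the estimate. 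The paper's method is shorter and uses both boundary values $f'(0)=0$, $f'(D)=-1$ in one stroke via the fundamental theorem of calculus; your energy method is more elaborate but gives sharper pointwise information on $V$ (and hence on the geometry of each slice), and the quantity $E$ is the kind of monotone object that could be useful in less symmetric or perturbative settings. It is a nice coincidence that both routes land on the same logarithmic expression $\tilde\Lambda w_0^2-\log(\tilde\Lambda w_0^2)-1$: in the paper it arises from integrating $1/((1-s/D)w_0)$, while in yours it is the maximum value $2M$ of $\phi$.
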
 

\begin{rem}
    The threshold value of $1/\sqrt{\tilde\Lambda}$ for the lower bound on $w_0$ is sharp, as can be seen by considering long and thin ellipsoids opening up to the constant scalar curvature $\Lambda$ cylinder. We also recall, in dimension $n=3$, that $8\pi/\Lambda$ is the upper threshold of area for a \emph{stable}, embedded, closed, orientable minimal surface (necessarily a two-sphere) in an $(M^3, g)$ with $\mathrm{Scal}_g\geqslant \Lambda>0$ (see eg. \cite{MN} Proposition A.1(i)). The explicit value of $D_0$ is certainly not sharp, however.  
\end{rem}

\begin{rem}
    It is easy to see that each of these curvature assumptions is necessary for a universal bound on diameter. Without a positive scalar curvature lower bound, the sole assumption of non-negative Ricci curvature cannot force the hemisphere to ``close up'' in a controlled manner, as one can see by considering capped-off cylinders of unbounded length. Without the assumption of non-negative Ricci curvature, unduloid-like strings of spheres joined by thick necks with arbitrarily small negative Ricci lower bounds exist. 
\end{rem}

\begin{proof}
    Let $f:[0,D]\to [0,1]$ be such a warping function for a metric $g$ on $\mathbb S^n_+$, and recall that the curvature conditions in the hypotheses enforce the following differential inequalities: \begin{equation*}
    f''(x)\leqslant 0 \qquad\text{and}\qquad \Lambda\leqslant (n-1)(n-2)\left(\frac{1-f'(x)^2}{f(x)^2}-\frac{2}{(n-2)}\frac{f''(x)}{f(x)}\right).
 \end{equation*} Together with $f(0)\geqslant w_0$, the first inequality tells us that on $[0,D]$\[f(x)\geqslant \left(1-\frac{x}{D}\right)f(0)\geqslant\left(1-\frac{x}{D}\right)w_0.\] Rearranging the second inequality, plugging in the above lower bound on $f(x)$, and using Lemma \ref{ScalarLipschitz} tells us that on $[0,D]$ \[f''(x)\leqslant p(x)\coloneqq\frac{n-2}{2}\left(\frac{1}{\left(1-\frac{x}{D}\right)w_0}-\tilde\Lambda w_0\left(1-\frac{x}{D}\right)\right).\] Since $w_0>1/\sqrt{\tilde\Lambda}$, we have that $p(0)<0$ and $p(x)$ is increasing. Moreover,
 \[
 \delta\coloneqq \sup\left\{0< x\leqslant D : p(x)<0 \text{ on } [0, x)\right\}>0,
 \]
 which is explicitly calculated to be 
 \[
 \delta=D\left(1-\frac{1}{w_0\sqrt{\tilde\Lambda}}\right).
 \]

 Now, recalling that $f'(0)=0$, $f'(D)=-1$, and $f''(x)\leqslant 0$, we compute from the above that 
 \[
 -1=\int_0^D f''(s)ds\leqslant \int_0^\delta p(s)ds=\frac{n-2}{2nw_0}\left(1-\tilde\Lambda w_0^2+\log\left(\tilde\Lambda w_0^2\right)\right)D<0
 \] 
 from which the claim follows. 
\end{proof}

\section{Stability of the Width: Proof of the Main Theorems}\label{sec: stab}

In this section we prove our main results, which will follow immediately from the following theorem, Lemma \ref{oneminsurf}, and Proposition \ref{diameter}:

\begin{thm}\label{main}
    Fix $n\geqslant 3$, $D>0$, and $\delta>0$. There exists an $\varepsilon=\varepsilon(n, \delta, D)>0$ such that if $g$ is a rotationally symmetric metric on the $n$-sphere $\mathbb S^n$ satisfying 
    \begin{itemize}
        \item $\mathrm{diam}_g(\mathbb S^n)\leqslant D$;
        \item $\mathrm{Scal}_g\geqslant n(n-1)(1-\varepsilon)^2$;
        \item $\mathrm{MinA}_g\geqslant \omega_{n-1}(1-\varepsilon)^{n-1},$ 
    \end{itemize} then $\mathrm{d}_{\mathcal{VIF}}((\mathbb S^n, g), (\mathbb S^n, g_{rd}))\leqslant\delta$.
\end{thm}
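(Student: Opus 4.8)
The plan is to write $g = ds^2 + f(s)^2 g_{rd}$ with $s\in[0,L]$, reduce to a clean warped-product picture via \Cref{oneminsurf}, run a short scalar-curvature ODE analysis showing that $g$ is $C^0$ close to $g_{rd}$ away from two small-volume ``bad'' regions, and then feed this into the Lakzian--Sormani comparison \Cref{LakzianSormani}.

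For $\varepsilon$ small we have $\mathrm{Scal}_g>0$, so $|f'|\leq 1$ by \Cref{ScalarLipschitz}, and by \Cref{oneminsurf} the function $f$ has a unique interior critical point $s_0$, a strict maximum; hence $f$ is strictly monotone on each hemisphere $[0,s_0]$ and $[s_0,L]$. (Any path joining the poles has length at least $L$, so $L = d_g(p_-,p_+)\leq \mathrm{diam}_g\leq D$.) On each hemisphere pass to the coordinate $r=f(s)$, so $g = V(r)^{-1}dr^2 + r^2g_{rd}$ with $V\colon[0,R]\to(0,1]$, $V(0)=1$, $V(R)=0$, where $R:=f(s_0)$. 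Since $\Sigma_{s_0}$ is a closed minimal hypersurface, $\omega_{n-1}R^{n-1} = \mathrm{Vol}^{n-1}_g(\Sigma_{s_0})\geq \mathrm{MinA}_g\geq \omega_{n-1}(1-\varepsilon)^{n-1}$, so $R\geq 1-\varepsilon$. The scalar curvature hypothesis reads, in these coordinates, $\frac{d}{dr}\bigl[r^{n-2}(1-V)\bigr]\geq n(1-\varepsilon)^2 r^{n-1}$; integrating from $0$ and from $R$, using $V(0)=1$ and $V(R)=0$, gives
\[
V(r)\leq 1-(1-\varepsilon)^2r^2,\qquad 1-V(r)\leq \frac{R^{n-2}\bigl(1-(1-\varepsilon)^2R^2\bigr)}{r^{n-2}}+(1-\varepsilon)^2r^2 .
\]
The first inequality at $r=R$ forces $R\leq(1-\varepsilon)^{-1}$, so $R\to 1$ as $\varepsilon\to 0$, and then the two inequalities together yield $|V(r)-(1-r^2)|\leq\Psi(\varepsilon\mid n,\alpha)$ on every interval $[\alpha,R]$ with $\alpha>0$ fixed. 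Finally, since $\frac{dV}{dr}=2f''$ on the interior of each hemisphere, the scalar bound also gives $\frac{dV}{dr}\leq (n-2)\frac{1-V}{r}-n(1-\varepsilon)^2r$, which is $\leq -1$ for $r$ near $R$ once $\varepsilon$ is small (as $R\to1$ and $V(R)=0$); integrating, $V(r)\geq R-r$ for $r\in[R-\beta,R]$ with $\beta=\beta(n)$ small.

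Now fix $\delta,D,n$; choose small $\alpha=\alpha(n,\delta,D)$ and $\beta=\beta(n,\delta,D)$, and let $U\subset\mathbb{S}^n$ be $\{\alpha\leq r\leq R-\beta\}$ in each hemisphere, with $\psi_1,\psi_2$ the obvious $r$-preserving maps into $(\mathbb{S}^n,g)$ and $(\mathbb{S}^n,g_{rd})$. On $U$ the pulled-back metrics $\psi_1^*g$ and $\psi_2^*g_{rd}$ have identical angular parts $r^2g_{rd}$ and radial parts $V(r)^{-1}dr^2$ and $(1-r^2)^{-1}dr^2$; since $1-r^2\geq\beta/2$ and $|V-(1-r^2)|\leq\Psi(\varepsilon\mid n,\alpha)$ there, they are $C^0$ Cheeger--Gromov $\varepsilon_{CG}$-close with $\varepsilon_{CG}=\Psi(\varepsilon\mid n,\alpha,\beta)\to 0$. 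The complement $\mathbb{S}^n\setminus U$ consists of two polar caps $\{r<\alpha\}$, each of $g$-volume at most $\omega_{n-1}\alpha^{n-1}D$ by the coarea formula and $L\leq D$, together with the equatorial collar $\{r>R-\beta\}$, whose meridional thickness is at most $2\sqrt{\beta}$ by the bound $V\geq R-r$, hence of $g$-volume at most $C(n)\sqrt{\beta}$; the corresponding pieces of $(\mathbb{S}^n,g_{rd})$ have equally small volume. The same estimates give $\bigl|\mathrm{Vol}^n_g(\mathbb{S}^n)-\mathrm{Vol}^n_{g_{rd}}(\mathbb{S}^n)\bigr|\leq\Psi(\varepsilon\mid n,\delta,D)$: on $U$ one integrates $V\approx 1-r^2$, and the leftover collars contribute at most $\omega_{n-1}\alpha^{n-1}D+C(n)\sqrt{\beta}$ to each hemisphere integral $\int_0^R r^{n-1}V(r)^{-1/2}\,dr$. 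This disposes of the volume term in $\mathrm{d}_{\mathcal{VIF}}$.

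It remains to bound the distance distortion $\lambda:=\sup_{x,y\in U}\bigl|d_g(\psi_1 x,\psi_1 y)-d_{g_{rd}}(\psi_2 x,\psi_2 y)\bigr|$ and then apply \Cref{LakzianSormani}. Once $\lambda\to 0$, the quantities $a$, $h$, $\overline h$ of that theorem all tend to $0$ with $\varepsilon$ (the factor $\arccos((1+\varepsilon_{CG})^{-1})$ tends to $0$, and $\max\{D_{U_1},D_{U_2}\}\leq\max\{D,\pi\}$); since $\mathrm{Vol}^{n-1}_g(\partial U)$ and $\mathrm{Vol}^{n-1}_{g_{rd}}(\partial U)$ are merely bounded while they are multiplied by $a+2\overline h\to 0$, the intrinsic flat estimate collapses to $\mathrm{d}_{\mathcal{IF}}\leq\Psi(\varepsilon\mid n,\delta,D)$, and together with the volume convergence this gives $\mathrm{d}_{\mathcal{VIF}}\leq\delta$ after choosing first $\alpha,\beta$ and then $\varepsilon$ small. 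To estimate $\lambda$ I would use that for a rotationally symmetric metric with a single maximal sphere the distance between two points depends only on their radial coordinates, their hemispheres, and the angle between their $\mathbb{S}^{n-1}$ factors, combined with: (i) on $U$ the warping data of $g$ and $g_{rd}$ agree up to $\varepsilon_{CG}$; and (ii) a minimizing segment between points of $U$ can be altered, at a cost of length $\leq\Psi(\alpha,\beta)$, so as to avoid the polar caps (an excursion into $\{r<\alpha\}$ is replaced by an arc inside the round sphere $\Sigma_\alpha$, of length $\leq\pi\alpha$, using that a thin polar spike is always ``long'' and never a shortcut) and to cross the equatorial collar along only a portion of meridional length $\leq\Psi(\beta)$. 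This comparison of intrinsic distances across the low-volume bad sets---in particular, ruling out that a thin but possibly long polar spike distorts distances between points of the bulk---is the principal technical obstacle; by contrast, the ODE estimates yielding $V\approx 1-r^2$ are brief, and it is there that the sharp thresholds in $\mathrm{Scal}_g$ and $\mathrm{MinA}_g$ enter decisively (via $R\to 1$). Finally, \Cref{diameter} plays no role in this argument; it is needed only to produce the hypotheses of the corollaries under a Ricci lower bound.
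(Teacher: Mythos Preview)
Your approach closely parallels the paper's---both invoke \Cref{oneminsurf}, integrate the scalar-curvature ODE to pin $V(r)\approx 1-r^2$ on $[\alpha,R]$, and feed a $C^0$ Cheeger--Gromov comparison into \Cref{LakzianSormani}---but differs in how the equatorial region is handled. The paper argues by contradiction with a sequence $g_k$ and introduces a \emph{second} coordinate chart $(s,\theta)$ near the equator, where it shows via Arzel\`a--Ascoli that $f_k\to\sin(\,\cdot\,+\text{shift})$ uniformly; this makes the good region $\Omega_\eta$ a single \emph{connected} band, so the distortion estimate for $\lambda$ only has to handle excursions into the two polar caps. You instead excise a thin equatorial collar $\{r>R-\beta\}$ and control its meridional thickness directly via your lower bound $V\geq R-r$. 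This is more elementary (no subsequence extraction, no second chart, direct quantitative dependence on $\varepsilon$), but leaves the good region $U$ disconnected and pushes extra work into the $\lambda$ estimate. Your sketch there is the thinnest part of the proposal: the phrase ``cross the equatorial collar along only a portion of meridional length $\leq\Psi(\beta)$'' is not quite the right mechanism, since a minimizing geodesic may traverse a large \emph{angular} distance inside the collar which cannot be altered away at small cost. What actually makes the comparison go through is that on the collar both metrics have angular part $r^2g_{rd}$ with $r$ pinned near $1$, so any collar arc between boundary points $(R-\beta,\theta_1)$ and $(R-\beta,\theta_2)$ has length $(R-\beta)\,d_{\mathbb S^{n-1}}(\theta_1,\theta_2)+O(\sqrt\beta)$ in \emph{both} metrics; with this observation (and a Clairaut-type bound on the number of cap/collar excursions, as in the paper's footnote to its own distortion lemma) your cut-and-paste argument is correct.
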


Without any further assumptions, and notably without any further curvature bounds, we obtain the following more-or-less equivalent rephrasing of Theorem \ref{main} in terms of the Gromov--Hausdorff distance. In this case, the potential formation of spines at the poles of the spheres is not controlled by the Gromov--Hausdorff topology, so we must excise this potential ``bad set'' to get Gromov--Hausdorff convergence (this technique has been used to great effect in stability probems before--see eg. \cites{D,DS,HirshZhangLlaurllStab, HI}). The following is a restatement of \Cref{t: mainDSHI}:

\begin{thm}\label{GHSurgery}
     Fix $n\geqslant 3$, $D>0$, and $\delta>0$. There exists an $\varepsilon=\varepsilon(n, \delta, D)>0$ such that if $g$ is a rotationally symmetric metric on the $n$-sphere $\mathbb S^n$ satisfying 
    \begin{itemize}
        \item $\mathrm{diam}_g(\mathbb S^n)\leqslant D$;
        \item $\mathrm{Scal}_g\geqslant n(n-1)(1-\varepsilon)^2$;
        \item $\mathrm{MinA}_g\geqslant \omega_{n-1}(1-\varepsilon)^{n-1},$ 
    \end{itemize} then there exists a smooth domain $Z\subset\mathbb S^n$ with at most two connected components satisfying \[\mathrm{Vol}_g^n(Z)+\mathrm{Vol}_g^{n-1}(\partial Z)\leqslant \delta,\] so that $\mathrm{d}_{GH}((\mathbb S^n\setminus Z, g), (\mathbb S^n, g_{rd}))\leqslant\delta$.
\end{thm}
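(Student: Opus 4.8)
The plan is to deduce Theorem \ref{GHSurgery} from Theorem \ref{main} (or rather from the geometric information generated in its proof), exploiting the fact that the only obstruction to Gromov--Hausdorff closeness—as opposed to $\mathcal{VIF}$ closeness—is the possible formation of thin spines near the two poles. First I would set up coordinates as in Section \ref{overview}, writing $g = ds^2 + f(s)^2 g_{rd}$ with $f(0)=f(D_g)=0$, $f'(0)=1=-f'(D_g)$, where $D_g = \mathrm{diam}_g(\mathbb{S}^n)$ is controlled by $D$. By Lemma \ref{oneminsurf}, for $\varepsilon$ small the canonical sweepout has a single, necessarily unstable, maximal leaf $\Sigma_{s_0}$ with $f(s_0) = \max f \geqslant (1-\varepsilon)$, and the scalar curvature computation forces $f(s_0)$ to be close to $1$ from above as well (since $f(s_0)^{n-1} \leqslant W_g/\omega_{n-1}$ and, by the rigidity mechanism of Theorem \ref{t: rigidity} applied quantitatively, a maximal leaf much larger than the round one would violate the scalar curvature bound on the capped-off hemisphere). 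The heart of the matter is then the a priori control on $f$ on the two hemispheres $[0,s_0]$ and $[s_0, D_g]$: on each one I expect the combination $\mathrm{Scal}_g \geqslant n(n-1)(1-\varepsilon)^2$ together with $|f'|\leqslant 1$ (a near-version of Lemma \ref{ScalarLipschitz}, valid up to a $\Psi(\varepsilon)$ error) to force $f$ to stay $\Psi(\varepsilon)$-close to $f_{rd}(s) = \sin(s)$ in $C^0$, and correspondingly force $f'$ to be close to $f_{rd}'$, \emph{except possibly on short intervals adjacent to the poles} where $f$ and its derivatives can misbehave.

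Next I would carry out the excision. Define, for a small threshold $\rho = \rho(\delta) > 0$ to be chosen, the set $Z := (\{s < s_-\} \cup \{s > s_+\}) \times \mathbb{S}^{n-1}$ where $s_-$ is the smallest value with $f(s_-) = \rho$ and $s_+$ the largest. This $Z$ is a smooth domain with at most two connected components (metric balls around the two poles). Using $|f'| \lesssim 1$ we get $s_- \lesssim \rho$ and $D_g - s_+ \lesssim \rho$, so $\mathrm{Vol}_g^n(Z) \lesssim \rho \cdot \rho^{n-1} \cdot \omega_{n-1}$ wait—more carefully, $\mathrm{Vol}_g^n(Z) = \omega_{n-1}\int_{Z} f^{n-1}\,ds \lesssim \omega_{n-1}\rho^{n-1}\cdot(\text{length}) \lesssim \omega_{n-1}\rho^n$ and $\mathrm{Vol}_g^{n-1}(\partial Z) = \omega_{n-1}(f(s_-)^{n-1} + f(s_+)^{n-1}) \leqslant 2\omega_{n-1}\rho^{n-1}$, which is $\leqslant \delta$ once $\rho$ is small depending on $n,\delta$. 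Then on $M \setminus Z = [s_-, s_+]\times\mathbb{S}^{n-1}$, I would apply Theorem \ref{LakzianSormani} of Lakzian--Sormani with $U_1 = U_2$ the corresponding collar region $[s_-, s_+]$ inside the round sphere: the $C^0$-closeness hypothesis $\tfrac{1}{(1+\eta)^2} g_{rd} \leqslant g \leqslant (1+\eta)^2 g_{rd}$ on this region (with $\eta = \Psi(\varepsilon)$) holds because $f/f_{rd}$ is pinched to $1$ there—this is exactly where I reuse the $C^0$ estimate for $f$ away from the poles. The GH bound from that theorem then reads $\mathrm{d}_{GH}(M\setminus Z, \mathbb{S}^n_{rd}) \lesssim a + 2\bar h + \max\{\mathrm{d}_{\mathcal{H}}^{\,\cdot}(U_i, \cdot)\}$; the Hausdorff-distance terms account for the excised polar caps and are $\lesssim \rho$, while $a, \bar h$ are $\Psi(\varepsilon \mid D)$, giving $\mathrm{d}_{GH} \leqslant \delta$ after choosing $\varepsilon$ small relative to $\delta, D, \rho$.

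The main obstacle, as I see it, is establishing the $C^0$-closeness of $f$ to $\sin s$ on the bulk region $[s_-, s_+]$ with a bound that is uniform in the metric and quantitatively controlled by $\varepsilon$ (and $D$): the scalar curvature inequality is a second-order differential inequality for $f$, and near-saturation of the scalar and width bounds does not obviously close up into a pointwise estimate without some care. Concretely, from $\mathrm{Scal}_g \geqslant n(n-1)(1-\varepsilon)^2$ one gets $(n-1)(n-2)\big(\tfrac{1-f'^2}{f^2}\big) - 2(n-1)\tfrac{f''}{f} \geqslant n(n-1)(1-\varepsilon)^2$, and one wants to integrate this against a suitable multiplier (e.g. comparing $(f')^2 + (1-\varepsilon)^2 f^2$ to its round value $\cos^2 s + \sin^2 s = 1$, whose derivative is driven by the scalar curvature defect) to conclude $f^2 + (f')^2 = 1 + \Psi(\varepsilon)$ along the hemisphere, hence $f$ stays near $\sin s$. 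Handling the integration constants correctly at the poles (where $f'\to \pm 1$) and ruling out the scenario where the ``defect'' concentrates into a long thin tube—which is precisely where $\mathrm{MinA}_g$ and the maximality/uniqueness of $\Sigma_{s_0}$ from Lemma \ref{oneminsurf} must be invoked to prevent the warping function from dipping and re-rising—is the technical crux. I expect these $C^0$ and derivative estimates for $f$ to already be established in the course of proving Theorem \ref{main}, in which case Theorem \ref{GHSurgery} follows by the packaging above; if not, they would need to be proved first as standalone ODE comparison lemmas.
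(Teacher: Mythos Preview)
Your strategy matches the paper's: your excised set $Z=\{f<\rho\}$ is exactly the paper's $Z_{k,\eta}=\mathbb{S}^n\setminus\overline{\Omega_{k,\eta}}$ with $\eta=\rho$, and the paper likewise feeds the $C^0$ estimates already proved for Theorem~\ref{main} (Lemmas~\ref{C0CheegerGromov}--\ref{volumeconv}) into Lakzian--Sormani's Theorem~\ref{LakzianSormani}, with $M_1=\mathbb{S}^n\setminus Z$ so that the Hausdorff term on the $g$-side vanishes and only the round-side term $\Psi(\eta)$ remains.

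There is, however, one concrete error that touches the heart of the theorem. Your claim that $|f'|\leqslant 1$ gives $s_-\lesssim\rho$ is backwards: from $f(0)=0$, $f(s_-)=\rho$, $|f'|\leqslant 1$ you get only the \emph{lower} bound $s_-\geqslant\rho$. In fact $s_-$ can be of order $D$---this is precisely the spine phenomenon, and is the entire reason $Z$ must be excised at all (were $s_-\lesssim\rho$ true, the full GH conclusion of Theorem~\ref{GHRicci} would follow with no excision and no Ricci hypothesis; compare Lemma~\ref{hausdorffsubregion}, where concavity of $f$ is what gives that bound). The volume estimate survives, since $\mathrm{Vol}^n_g(Z)\leqslant\omega_{n-1}\rho^{n-1}\cdot s_-\leqslant\omega_{n-1}\rho^{n-1}D$ rather than $\rho^n$, and this is exactly the paper's computation in Lemma~\ref{volumeconv}(2). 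More seriously, the same spine issue invalidates your plan to compare $f(s)$ to $\sin s$ via the identity in $s$-coordinates: with a spine of length $L$ at a pole one has $f(s)\approx\sin(s-L)$ on the bulk, not $\sin s$, so ``the corresponding collar region $[s_-,s_+]$ inside the round sphere'' need not even make sense (possibly $s_->\pi$). The paper avoids this by working primarily in the coordinate $r=f(s)$, which collapses spines, comparing $V_k(r)$ to $1-r^2$ (Lemma~\ref{vestimate}), and using an $s$-chart only in a fixed neighborhood of the equator (Lemma~\ref{festimate}). Your $s$-coordinate route can be salvaged by first reparametrizing $s\mapsto s-s_-+\arcsin\rho$, but that is effectively the same change of variables.
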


If we further impose the condition that $g$ has non-negative Ricci curvature, then we can obtain Gromov--Hausdorff stability without the assumed diameter bound and without the surgical removal of the bad set using the proof of Theorem \ref{main} and Proposition \ref{diameter}. In this situation, we can also phrase the $\mathrm{MinA}_g$ condition in terms of $W_g$ instead, by the last conclusion of Lemma \ref{oneminsurf}. Indeed, all that is important in the proof of Theorem \ref{main} is that no leaf of the sweepout $\Sigma_s$ is minimal if $s\neq 0$. Thus we obtain the following (a restatement of Theorem \ref{t: mainRicci}), which is another stabilized version of Marques-Neves' Theorem \ref{MN} in general $n$-dimensions under the assumption of rotational symmetry: 

\begin{thm}\label{GHRicci}
    Fix $n\geqslant 3$ and $\delta>0$. There exists an $\varepsilon=\varepsilon(n, \delta)>0$ such that if $g$ is a rotationally symmetric metric on the $n$-sphere $\mathbb S^n$ satisfying 
    \begin{itemize}
        \item $\mathrm{Scal}_g\geqslant n(n-1)(1-\varepsilon)^2$;
        \item $\mathrm{Ric}_g\geqslant 0$;
        \item $W_g\geqslant \omega_{n-1}(1-\varepsilon)^{n-1},$ 
    \end{itemize} then $\mathrm{d}_{GH}((\mathbb S^n, g), (\mathbb S^n, g_{rd}))\leqslant\delta$.
\end{thm}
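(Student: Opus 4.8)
The plan is to bootstrap off of \Cref{main} --- more precisely, off of the construction in its proof which feeds \Cref{LakzianSormani} --- together with the a priori diameter estimate \Cref{diameter}, exploiting the hypothesis $\mathrm{Ric}_g\geqslant 0$ for two distinct purposes: first to \emph{remove} the diameter assumption of \Cref{main}, and second to show that the ``bad set'' which \Cref{GHSurgery} must surgically excise is in fact Gromov--Hausdorff negligible when $\mathrm{Ric}_g\geqslant 0$, so that no surgery is needed.

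\emph{Step 1: a dimensional diameter bound.} Write $g=ds^2+f(s)^2g_{rd}$ on $[0,D]\times\mathbb S^{n-1}$. Since $\mathrm{Ric}_g\geqslant 0$ we have $f''\leqslant 0$, so $f$ is concave, and as $f(0)=f(D)=0$ it attains an interior maximum at some $m^*\in(0,D)$ with $f'(m^*)=0$; by \Cref{Wdef}, $W_g=\omega_{n-1}f(m^*)^{n-1}$, whence the width hypothesis gives $f(m^*)\geqslant 1-\varepsilon$, and substituting this together with $f'(m^*)=0$ into the scalar curvature formula at $m^*$ forces $f''(m^*)<0$ (the same mechanism used in the proof of \Cref{oneminsurf}), so the maximum is strict. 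Now set $\Lambda=n(n-1)(1-\varepsilon)^2$ and $w_0=1-\varepsilon$: for $\varepsilon$ below a dimensional threshold $\varepsilon_0(n)$ one has $w_0=1-\varepsilon>\tfrac{1}{1-\varepsilon}\sqrt{(n-2)/n}=\sqrt{(n-1)(n-2)/\Lambda}$, and the width bound reads $W_g\geqslant\omega_{n-1}w_0^{n-1}$. Since also $\mathrm{Scal}_g\geqslant\Lambda>0$ and $\mathrm{Ric}_g\geqslant 0$, the final assertion of \Cref{diameter} gives $\mathrm{diam}_g(\mathbb S^n)\leqslant 2D_0(n,\Lambda,w_0)$. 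With these choices $\tilde\Lambda w_0^2=\tfrac{n(1-\varepsilon)^4}{n-2}\to\tfrac{n}{n-2}>1$ as $\varepsilon\to 0$, so for $\varepsilon<\varepsilon_0(n)$ the denominator in the explicit formula for $D_0$ is bounded below by a positive dimensional constant while the numerator is bounded above; hence $\mathrm{diam}_g(\mathbb S^n)\leqslant D(n)$ for a \emph{purely dimensional} constant $D(n)$.

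\emph{Step 2: run the stability machinery, then shave off the caps.} Feeding the uniform bound $\mathrm{diam}_g(\mathbb S^n)\leqslant D(n)$ into the proof of \Cref{main} (equivalently \Cref{GHSurgery}): that argument uses the $\mathrm{MinA}$ hypothesis only through the output of \Cref{oneminsurf} --- that the canonical sweepout contains no minimal leaf besides the maximal one --- and by the last sentence of \Cref{oneminsurf} this output is equally guaranteed by $\mathrm{Ric}_g\geqslant 0$ together with the lower bound on $W_g$. Thus the \Cref{LakzianSormani} comparison applies to $U=\mathbb S^n\setminus Z$, where $Z$ is the pinched region (of the form $[0,\eta_-]\times\mathbb S^{n-1}$ and $[D-\eta_+,D]\times\mathbb S^{n-1}$, at most two polar caps since $f$ is unimodal), and for $\varepsilon<\varepsilon_2(n,\delta_1)$ --- a function of $n$ and $\delta_1$ only, as $D(n)$ is dimensional --- we obtain both $\mathrm{Vol}^n_g(Z)+\mathrm{Vol}^{n-1}_g(\partial Z)\leqslant\delta_1$ and $\mathrm{d}_{GH}((\mathbb S^n\setminus Z,g),(\mathbb S^n,g_{rd}))\leqslant\delta_1$, with $\delta_1=\delta_1(n,\delta)\leqslant\delta/2$ still to be pinned down. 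Concavity of $f$ with $f(0)=0$ now gives $f(s)\geqslant\tfrac{f(m^*)}{m^*}s\geqslant\tfrac{1-\varepsilon}{D(n)}s=:c(n)\,s$ on $[0,m^*]$, symmetrically near $s=D$, where $c(n)>0$ is dimensional precisely because $m^*\leqslant D(n)$. Since $\omega_{n-1}f(\eta_-)^{n-1}\leqslant\mathrm{Vol}^{n-1}_g(\partial Z)\leqslant\delta_1$ we get $\eta_-\leqslant c(n)^{-1}(\delta_1/\omega_{n-1})^{1/(n-1)}$, and because $f\leqslant f(\eta_-)$ on that cap, its $d_g$-diameter is at most $2\eta_-+\pi f(\eta_-)\leqslant\Psi(\delta_1:n)$, and likewise for the other cap.

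\emph{Step 3: conclusion, and the main difficulty.} As $Z$ lives at the two ends of $\mathbb S^n$ and a $g$-geodesic joining two points of $\mathbb S^n\setminus Z$ can enter $Z$ only to depth the cap diameters, $\mathbb S^n\setminus Z$ is $\Psi(\delta_1:n)$-dense in $(\mathbb S^n,g)$ with its intrinsic metric agreeing with $d_g$ up to $O(\Psi(\delta_1:n))$, so $\mathrm{d}_{GH}((\mathbb S^n,g),(\mathbb S^n\setminus Z,g))\leqslant\Psi'(\delta_1:n)$. Fixing $\delta_1=\delta_1(n,\delta)\leqslant\delta/2$ small enough that also $\Psi'(\delta_1:n)\leqslant\delta/2$, and setting $\varepsilon(n,\delta)=\min\{\varepsilon_0(n),\varepsilon_2(n,\delta_1)\}$, the triangle inequality gives $\mathrm{d}_{GH}((\mathbb S^n,g),(\mathbb S^n,g_{rd}))\leqslant\delta_1+\Psi'(\delta_1:n)\leqslant\delta$. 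The main obstacle is Step 2: one must pin down exactly where in the proof of \Cref{main}/\Cref{GHSurgery} the $\mathrm{MinA}$ hypothesis is invoked and check that it may be replaced throughout by the structural conclusion of \Cref{oneminsurf} under the weaker $W_g$ hypothesis with $\mathrm{Ric}_g\geqslant 0$ (and that the excised set there is indeed two polar caps). A secondary, lesser, subtlety is that the estimate $f(s)\geqslant c(n)\,s$ near the poles used to make the caps negligible genuinely requires \emph{both} the concavity coming from $\mathrm{Ric}_g\geqslant 0$ \emph{and} the dimensional diameter bound of Step 1.
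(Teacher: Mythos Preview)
Your proposal is correct and follows essentially the same strategy as the paper: use \Cref{diameter} to manufacture a dimensional diameter bound, invoke the last clause of \Cref{oneminsurf} to replace the $\mathrm{MinA}_g$ hypothesis by the $W_g$ one, and then exploit the concavity of $f$ coming from $\mathrm{Ric}_g\geqslant 0$ to show the polar caps are Gromov--Hausdorff negligible.

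The only real difference is organizational. The paper does not route through \Cref{GHSurgery} and a triangle inequality; instead it reopens the Lakzian--Sormani machinery directly with $M_1=(\mathbb S^n,g_k)$ and $M_2=(\mathbb S^n,g_{rd})$, and supplies the one missing ingredient---the Hausdorff term $\mathrm{d}_{\mathcal H}^{M_1}(\Omega_{k,\eta},\mathbb S^n)$---via a short lemma bounding the $s$-length $s_k$ of each cap. Their concavity estimate is also slightly different from yours: rather than your secant bound $f(s)\geqslant (f(m^*)/m^*)\,s$ together with the dimensional diameter bound, they use that $f'$ is nonincreasing and $f'(s_k)=V_k(\eta)^{1/2}\to\sqrt{1-\eta^2}$ (from \Cref{vestimate}) to get $\eta=f(s_k)\geqslant s_k f'(s_k)$, hence $s_k\leqslant\Psi(\eta:k^{-1})$. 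This is sharper but not materially so; your estimate is equally sufficient. Your packaging has the minor advantage of treating \Cref{GHSurgery} as a black box, at the cost of the small extra argument in Step~3 that the intrinsic and restricted metrics on $\mathbb S^n\setminus Z$ agree up to $\Psi$, which the paper's direct application of \Cref{LakzianSormani} sidesteps.
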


\begin{proof}[Proof of Theorem \ref{main}]

We proceed by way of contradiction, supposing that the result were false and obtaining some $\delta_0>0$ and a sequence of smooth counterexample metrics on $\mathbb S^n$, written in coordinates as \[g_k=ds^2+f_k(s)^2g_{rd}\text{\quad on\quad} [0,S_k]\times\mathbb S^{n-1},\]for $k=1, 2, \ldots$ satisfying \begin{itemize}
    \item $S_k=\mathrm{diam}_{g_k}(\mathbb S^n)\leqslant D$;
    \item $\mathrm{Scal}_{g_k}\geqslant n(n-1)(1-k^{-1})^2$;
    \item $\mathrm{MinA}_{g_k}\geqslant \omega_{n-1}(1-k^{-1})^{n-1}$; 
    \item $\mathrm{d}_{\mathcal{VIF}}((\mathbb S^n, g_k), (\mathbb S^n, g_{rd}))\geqslant\delta_0$ for all $k\geqslant 1$.
\end{itemize} To begin, we utilize  \Cref{oneminsurf} for all large enough $k\geqslant 1$ to identify the sole minimal hypersurface $\Sigma^{(k)}_{\tilde S_k}=\{\tilde S_k\}\times\mathbb S^{n-1}$ in the canonical sweepout of $(\mathbb S^n, g_k)$, which divides $(\mathbb S^n, g_k)$ into two connected hemispheres which we denote by $(\mathbb S^n_\pm, g_k)$. See Figure \ref{fig:rotsymsphere}. For the sake of readability, in the following sequence of lemmas we will only explicitly work on $(\mathbb S^n_-, g_k)$ since the situation with $(\mathbb S^n_+, g_k)$ is handled nearly identically. As in \Cref{overview}, we introduce $(r,\theta)$ coordinates on the hemisphere so that 
 \begin{align*}
     \left(\mathbb S^n_-, g_k\right)&=_{isom}\left(\left[0,\tilde S_k\right)\times\mathbb S^{n-1}, \quad g_k=ds^2+f_k(s)^2g_{rd}\right)\\ &=_{isom}\left(\left[0,R_k\right)\times\mathbb S^{n-1}, \quad g_k=\frac{1}{V_{k,-}(r)}dr^2+r^2g_{rd}\right).
 \end{align*} 
 \begin{figure}
     \centering
     \includegraphics[width=0.4\linewidth]{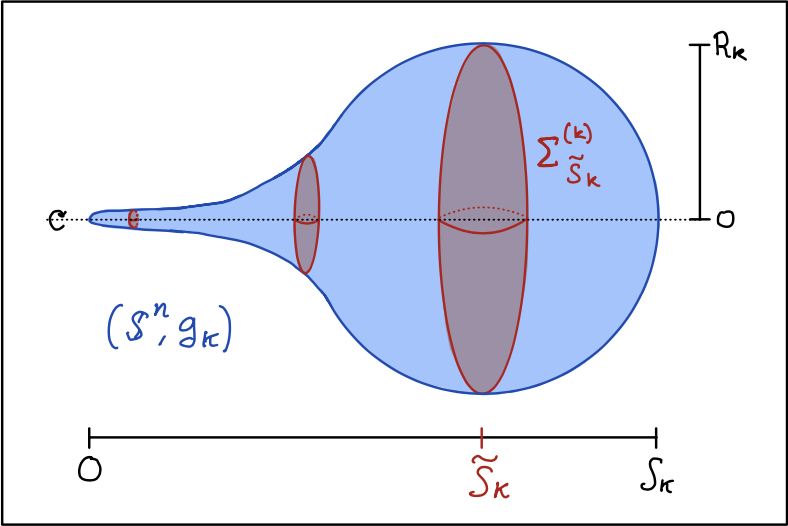}
     \caption{$(\mathbb S^n, g_k)$ with the minimal sphere $\Sigma^{(k)}_{\tilde S_k}$.}
     \label{fig:rotsymsphere}
 \end{figure}
 
 We also recall that this means $r$ is related to $s$ via the formula $r=f_k(s)$, and that on $[0, R_k)$ \[V_{k,-}(r)=\left(f_k'\left(\left(f_k\vert_{\left[0,\tilde S_k\right)}\right)^{-1}(r)\right)\right)^2.\] The following simple lemma bounds $R_k$ and $\tilde S_k$ (recall that by definition, $R_k=f_k(\tilde S_k)$):

\begin{lem}\label{Rk}
For every $k\geqslant 1$, we have that \[1-k^{-1}\leqslant R_k\leqslant \frac{1}{1-k^{-1}} \quad\text{and}\quad (1-k^{-1})\leqslant \tilde S_k\leqslant S_k.\]   
\end{lem}

\begin{proof}
    Recall that \[\mathrm{Scal}_{g_k}=\frac{n-1}{r^2}\left((n-2)(1-V(r))-rV'(r)\right)\geqslant n(n-1)(1-k^{-1})^2\] which after suitable rearrangement becomes \[\left(r^{n-2}(1-V(r))\right)'\geqslant nr^{n-1}(1-k^{-1})^2.\] If we integrate both sides from $0$ to $R_k$ using $V_k(R_k)=0$, we obtain the upper bound on $R_k$. The lower bound on $R_k$ simply follows from the $\mathrm{MinA}_{g_k}$ lower bound. By pairing the $\mathrm{MinA}_{g_k}$ lower bound with Lemma \ref{ScalarLipschitz}, we also obtain the lower bound on $\tilde S_k$. 
\end{proof}

For every $k\geqslant 1$ we extend $V_{k,-}(r)$ from $[0, R_k]$ to $[0,1]$ constantly by $0$, and we extend $f_k(s)$ from $[0, \tilde S_k]$ to $[0, D]$ constantly by $R_k$. These extensions are smooth everywhere on their now fixed domains of definition, except at the points $R_k$ and $\tilde S_k$, respectively, where they are continuous. By Lemma \ref{Rk}, we can assume that up to a subsequence, $\tilde S_k\nearrow \tilde S\in [1,D]$. Using the scalar curvature and MinA bounds, we may now prove our two most fundamental estimates on our metric tensors, which give pointwise convergence to the round sphere. The first estimate will eventually allow us to prove volume convergence (\Cref{volumeconv}) and together with the second estimate we will be able to establish all of the other estimates needed (\Cref{C0CheegerGromov}, \Cref{DU}, \Cref{distortion}) to apply Lakzian--Sormani's \Cref{LakzianSormani} to obtain the desired contradiction. 

\begin{lem}[Fundamental Metric Estimates (I)]\label{vestimate} 
Fix $0<\eta<1$. For every $k\geqslant 1$ large enough, we have the following uniform estimate for $r\in [\eta,1]$: \[\left|V_{k,-}(r)-V_{rd}(r)\right|\leqslant \Psi(k^{-1}:\eta)\] where we recall that $V_{rd}(r)=1-r^2$. Therefore if $0<\rho<<1$, we have that \[\left|\frac{V_{k,-}(r)}{V_{rd}(r)}-1\right|\leqslant \Psi(k^{-1}:\eta, \rho)\quad\text{on}\quad [\eta, 1-\rho].\]
\end{lem}

\begin{proof}
Recalling Section \ref{overview}, via the scalar curvature lower bounds we have the following ordinary differential inequality for $V_{k,-}$ on $[0,R_k]$: \[n(n-1)(1-k^{-1})^2\leqslant \mathrm{Scal}_{g_k}=\frac{n-1}{r^2}\left((n-2)(1-V_{k,-}(r))-rV_{k,-}'(r)\right),\] or in other words \[V_{k,-}'(r)+\frac{n-2}{r}V_{k,-}(r)\leqslant \frac{n-2}{r}- n(1-k^{-1})^2r.\] After multiplying this last line by the integrating factor $r^{n-2}$ and integrating from $0$ to $r\in (0,R_k]$, we obtain the upper bound \[V_{k,-}(r)\leqslant 1-(1-k^{-1})^2r^2\text{\quad on\quad} [0,R_k].\] If we instead integrate from $r\in [0,R_k)$ to $R_k$ and recall that $V_{k,-}(R_k)=0$ by minimality, we obtain the lower bound \[(1-(1-k^{-1})^2r^2)+\frac{(1-k^{-1})^2R_k^n-R_k^{n-2}}{r^{n-2}}\leqslant V_{k,-}(r)\text{\quad on\quad} (0,R_k].\] Notice that by Lemma \ref{Rk} the second term on the left hand side could degenerate to $-\infty$ as $r\searrow 0$, but that on $[\eta, R_k]$ it is always bounded and decays to $0$ uniformly as $k\to\infty$. Therefore, recalling the definition of $V_{rd}(r)=1-r^2$ on $[0,1]$, we can easily wrap these estimates into the form \[\left|V_{k,-}(r)-V_{rd}(r)\right|\leqslant \Psi(k^{-1}:\eta)\text{\quad on\quad} [\eta, 1].\] The last estimate in the lemma follows from the first, if we pair it with the bounds $1\geqslant V_{rd}(r)\geqslant 1-(1-\rho)^2$ on $[\eta, 1-\rho]$. 
\end{proof}

\begin{lem}[Fundamental Metric Estimates (II)]\label{festimate} 
For every $k\geqslant 1$ large enough, we have the following uniform estimate for $s\in \left[\tilde S-1,\tilde S\right]$ (recall that $\tilde S=\lim_{k\to\infty}\tilde S_k$):\[\left\vert\frac{f_k(s)}{f_{rd}\left(s+\left(\frac{\pi}{2}-\tilde S\right)\right)}-1\right\vert\leqslant\Psi(k^{-1})\] where we recall that $f_{rd}(s)=\sin(s)$. Therefore, for all large $k$, $f_k(\tilde S_k-1)\leqslant \frac{3}{4}$ so that the two coordinate systems \[(r,\theta)\in U_{k,\eta, -}:=\left(\eta, R_k-\frac{1}{100}\right)\times\mathbb S^{n-1}\quad\text{and}\quad (s,\theta)\in E_{k,-}:=\left(\tilde S_k-\frac{99}{100}, \tilde S_k\right]\times\mathbb S^{n-1}\] cover all of $(\mathbb S^n_-, g_k)$ except the ``small'' region $[0,\eta]\times\mathbb S^{n-1}$ in the $(r,\theta)$ coordinates, and where both of these coordinate charts enjoy the estimates of this and the previous lemma. 
\end{lem}

\begin{proof}
We first observe that by our upper diameter bound $D$ and the a-priori Lipschitz bound of Lemma \ref{ScalarLipschitz}, the Arzel\'a-Ascoli Theorem guarantees that a subsequence of the $f_k$ converges uniformly on $[0,\tilde S]$ to a non-negative $1$-Lipschitz function $f_\infty$ which clearly satisfies $f_\infty(0)=0$ and $f_\infty(\tilde S)=1$ (by Lemma \ref{Rk}). In fact, we claim that $f_\infty$ satisfies the following partial boundary value problem, from which the result follows easily: \[\begin{cases}
    f_\infty'(s)^2+f_\infty(s)^2=1 & \text{on}\quad (\tilde S-1,\tilde S)\\
    f_\infty(\tilde S)=1. &
\end{cases}\]To establish this, recall that $V_{k,-}(f_k(s))=(f_k'(s))^2$. Fix an arbitrary $0<\eta<1$. For any $s\in (\tilde S-\eta, \tilde S)$, for all large enough $k$ we also have $s\in (\tilde S_k-\eta, \tilde S_k)$ where $f_k$ is smooth. By Lemma \ref{ScalarLipschitz}, we can estimate that \[f_k(s)=R_k-\int_{s}^{\tilde S_k} f'_k(\xi)d\xi\geqslant (1-k^{-1})-(\tilde S_k-s)\geqslant 1-\eta-k^{-1}\geqslant\frac{1-\eta}{2}>0\] for all large enough $k$ (depending on $\eta$). By the uniform convergence of the $V_{k,-}$ on $[(1-\eta)/2, 1]$ just proven in Lemma \ref{vestimate}, we therefore see that we also have uniform convergence of the $f_k'(s)=\left(V_{k,-}(f_k(s))\right)^{1/2}$ on $[\tilde S-\eta, \tilde S-\rho]$ for any $0<\rho<<1$. To wit,
\[
|f_k'(s)^2-V_{k,-}(f_k(s))|\leqslant\Psi(k^{-1}: \eta, \rho)\quad\text{on}\quad [\tilde S-\eta, \tilde S-\rho]
\]
and so by Lemma \ref{vestimate} and the fact that $f_k\to f_\infty$ uniformly on $[0,\tilde S]$, 
\[
|f_k'(s)^2+f_\infty(s)^2-1|\leqslant\Psi(k^{-1}: \eta, \rho)\quad\text{on}\quad [\tilde S-\eta, \tilde S-\rho].
\] 
Thus, by sending $k\to\infty$ we obtain that $f_\infty$ is differentiable and 
\[ f_\infty'(s)^2+f_\infty(s)^2=1\quad\text{on}\quad [\tilde S-\eta, \tilde S-\rho],
\]
from which smoothness follows. Sending $\eta\nearrow 1$ and $\rho\searrow 0$ establishes the claim, from which we see that 
\[
f_\infty(s)=\sin\left(s+\left(\frac{\pi}{2}-\tilde S\right)\right)\quad\text{on}\quad [\tilde S-1,\tilde S].
\] 
The conclusion of the lemma now follows readily. 
\end{proof}

\begin{rem}
Our arguments require us to use both of these coordinate systems in order to cover enough of the manifold to garner global convergence. Notice that Lemma \ref{festimate} addresses a region of definite size around the largest sphere in the canonical sweepout of each $(\mathbb S^n, g_k)$, and tells us that inside this region we asymptotically see the geometry of the round sphere. However, the possibility of spine formation away from the minimal sphere causes the estimates in these coordinate charts to break down. Nevertheless, Lemma \ref{vestimate} and the uniform diameter bound give us enough control on the rest of the manifold to make up for this. See Figure \ref{fig:coords}.
\end{rem}

\begin{figure}
    \centering
\includegraphics[width=0.85\linewidth]{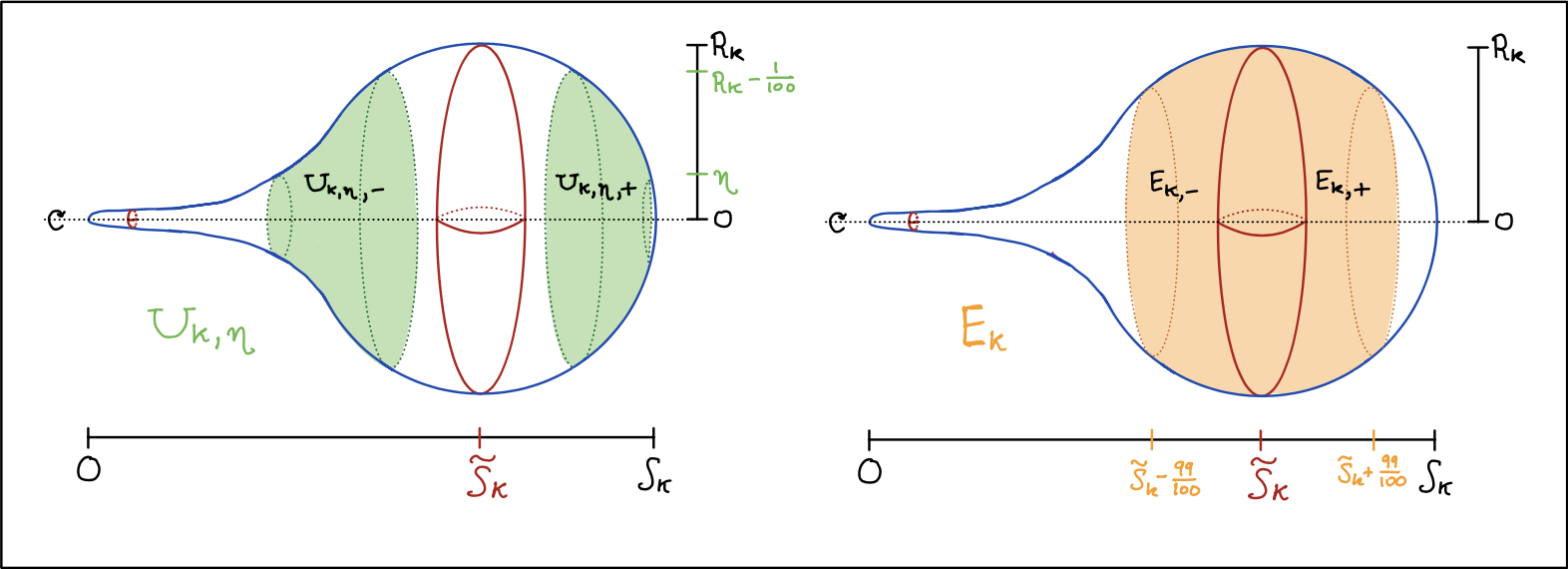}
    \caption{The charts $U_{k,\eta}$ and ${E_k}$.}
    \label{fig:coords}
\end{figure}

In the following sequence of lemmas, we estimate the various quantities appearing in the Lakzian--Sormani estimates of Theorem \ref{LakzianSormani} using Lemmas \ref{vestimate} and \ref{festimate}. We also return to global setting on all of $\mathbb S^n$, having similarly carried out the analogous estimates on the hemispheres $(\mathbb S_+^n, g_k)$, which we cast in the following useful coordinate parametrizations:  \begin{align*} \left(\mathbb S^n_+, g_k\right)&=_{isom}\left(\left(\tilde S_k, S_k\right]\times\mathbb S^{n-1}, \quad g_k=ds^2+f_k(s)^2g_{rd}\right)\\ &=_{isom}\left(\left(2-R_k,2\right]\times\mathbb S^{n-1}, \quad g_k=\frac{1}{V_{k,+}(2-r)}dr^2+(2-r)^2g_{rd}\right)\end{align*} where $2-r=f_k(S_k-s)$. Using the notation established in Lemma \ref{festimate}, we thus work in the following charts for $(\mathbb S^n, g_k)$ (see again Figure \ref{fig:coords}):
\begin{align*}
    &(r,\theta)\in U_{k,\eta}:=U_{k,\eta,-}\sqcup U_{k,\eta, +}=\left(\left(\eta, R_k-\frac{1}{100}\right)\cup \left(2-R_k+\frac{1}{100}, 2-\eta\right)\right)\times\mathbb S^{n-1}\\
    &(s,\theta)\in E_k:=E_{k,-}\cup  E_{k,+}=\left(\tilde S_k-\frac{99}{100}, \tilde S_k+\frac{99}{100}\right)\times\mathbb S^{n-1},
\end{align*} 
where the restriction of $g_k$ (defined in $(r,\theta)$ coordinates for $r\in [0, 2R_k]\setminus\{R_k\}$) has the following coordinate expressions: \begin{align*}
    g_k\vert_{p}=\begin{cases}
         \frac{1}{V_{k,-}(r)}dr^2+r^2g_{rd} & \text{if } p=(r,\theta)\in U_{k, \eta, -}\subset U_{k,\eta}\\
        ds^2+f_k(s)^2g_{rd} & \text{if } p=(s,\theta)\in E_k \\
        \frac{1}{V_{k,+}(2-r)}dr^2+(2-r)^2g_{rd} & \text{if } p=(r,\theta)\in U_{k, \eta, +}\subset U_{k,\eta}.\\
    \end{cases}
\end{align*} We also define the fixed charts  
\begin{align*}
    &(r,\theta)\in U_\eta:=U_{\eta,-}\sqcup U_{\eta, +}=\left(\left(\eta, \frac{9}{10}\right)\cup \left(\frac{11}{10}, 2-\eta\right)\right)\times\mathbb S^{n-1}\\
    &(s,\theta)\in E:=E_-\cup  E_+=\left(\tilde S-\frac{9}{10}, \tilde S+\frac{9}{10}\right)\times\mathbb S^{n-1},
\end{align*}  where the restriction of $g_{rd}$ (defined in $(r,\theta)$ coordinates for $r\in [0,2]\setminus\{1\}$) has the following coordinate expressions: \begin{align*}
    g_{rd}\vert_{p}=\begin{cases}
         \frac{1}{1-r^2}dr^2+r^2g_{rd} & \text{if } p=(r,\theta)\in U_{\eta, -}\subset U_{\eta}\\
        ds^2+\sin\left(s+\left(\frac{\pi}{2}-\tilde S\right)\right)^2g_{rd} & \text{if } p=(s,\theta)\in E \\
        \frac{1}{1-(2-r)^2}dr^2+(2-r)^2g_{rd} & \text{if } p=(r,\theta)\in U_{\eta, +}\subset U_{\eta}.\\
    \end{cases}
\end{align*} Finally, we let \[\Omega_\eta\coloneqq (\eta, 2-\eta)\times\mathbb S^{n-1}\] denote a spherical band. For all $k\geqslant 1$ so large that $|R_k-1|, |\tilde S_k-\tilde S|<\min\{\frac{1}{1000},\frac{\eta}{2}\}$, Lemma \ref{festimate} guarantees that the charts $(U_{k,\eta}, g_k)$ and $(E_k, g_k)$ together cover $\Omega_\eta$ with the pullback of $g_k$ by the natural inclusion induced by the $(r,\theta)$ coordinates, which we write as \[\Omega_{k,\eta}\coloneqq(\Omega_\eta, g_k) \hookrightarrow (\mathbb S^n, g_k).\] Corresponding to this is, of course, the following subregion of the round sphere: \[\Omega_{rd, \eta}:=(\Omega_\eta, g_{rd})\hookrightarrow (\mathbb S^n, g_{rd}).\] Lemmas \ref{vestimate} and \ref{festimate} then allow us to uniformly compare the components of $g_k$ and $g_{rd}$ on $\Omega_\eta$ to easily obtain the first estimate needed for Lakzian--Sormani's Theorem \ref{LakzianSormani}:

\begin{lem}[$C^0$ Cheeger--Gromov Convergence]\label{C0CheegerGromov}
    For all $k\geqslant 1$ large enough, \[1-\Psi(k^{-1}:\eta)\leqslant \frac{g_k(v,v)}{g_{rd}(v,v)}\leqslant 1+\Psi(k^{-1}:\eta)\]  for every $v\in T\Omega_\eta$ (where we have omitted the pullback maps to $\Omega_\eta$ from the notation for readability). \end{lem}

\begin{proof}
    Fix $v_p\in T_p\Omega_\eta$. If $p=(r,\theta)\in U_\eta$, then for all large enough $k\geqslant 1$ and Lemma \ref{vestimate} \begin{align*}
        \frac{g_k(v,v)}{g_{rd}(v,v)}&=\frac{\frac{1}{V_k(r)}dr^2(v,v)+r^2g_{rd}(v,v)}{\frac{1}{\sqrt{1-r^2}}dr^2(v,v)+r^2g_{rd}(v,v)}\\
        &\leqslant\frac{(1+\Psi(k^{-1}:\eta))\left(\frac{1}{\sqrt{1-r^2}}dr^2(v,v)+r^2g_{rd}(v,v)\right)}{\frac{1}{\sqrt{1-r^2}}dr^2(v,v)+r^2g_{rd}(v,v)}\\
        &=1+\Psi(k^{-1}:\eta)
    \end{align*} and similarly for the other inequality. Likewise, if it happens that $p=(s,\theta)\in E$, then for every $k\geqslant 1$ large enough and Lemma \ref{festimate}
    \begin{align*}
        \frac{g_k(v,v)}{g_{rd}(v,v)}&=\frac{ds^2(v,v)+f_k(s)^2g_{rd}(v,v)}{ds^2(v,v)+\sin\left(s+\left(\frac{\pi}{2}-\tilde S\right)\right)^2g_{rd}(v,v)}\\
        &\leqslant\frac{(1+\Psi(k^{-1}:\eta))\left(ds^2(v,v)+\sin\left(s+\left(\frac{\pi}{2}-\tilde S\right)\right)^2g_{rd}(v,v)\right)}{ds^2(v,v)+\sin\left(s+\left(\frac{\pi}{2}-\tilde S\right)\right)^2g_{rd}(v,v)}\\
        &=1+\Psi(k^{-1}:\eta)
    \end{align*} and similarly for the other inequality, as desired.
\end{proof}

For the next estimate, given $\Omega\subset (M,g)$ we recall the quantity \[D_{\Omega}=\sup\{\mathrm{diam}_{M}(W): \text{W is a connected component of $\Omega$}\}.\] Seeing as though $S_k=\mathrm{diam}_{g_k}(\mathbb S^n)\leqslant D$, we immediately obtain the following lemma: 

\begin{lem}[Estimating $D_{\Omega_{k, \eta}}$, $D_{\Omega_{rd,\eta}}$, and $a$]\label{DU}  
For all $k\geqslant 1$ large enough, \[D_{\Omega_{k, \eta}}\leqslant D \qquad\text{and}\qquad D_{\Omega_{rd,\eta}}\leqslant\pi.\] Therefore, the parameter $a$ in the statement of Theorem \ref{LakzianSormani} may be taken such that $  a\leqslant\Psi(k^{-1}).$
\end{lem} 
It may be worth noting that one can easily obtain sharper estimates for $D_{\Omega_{k,\eta}}$ and thus $a$ in the above by explicitly constructing curves in $(\mathbb S^n,g_k)$ between pairs of points $x,y\in\Omega_{k,\eta}$ and bounding their lengths using Lemmas \ref{vestimate} and \ref{festimate}, instead of cheaply using the uniform diameter bound. In this case, we would be able to ensure a choice of $a$ such that that $a\leqslant\Psi(k^{-1}, \eta)$ (recall from Subsection \ref{notation} that this means $\Psi\to0$ as $k\to\infty$ \emph{and} $\eta\searrow 0$). We carry such an argument out in the following: 
 
\begin{lem}[Estimating $\lambda$, $h$, and $\overline h$]\label{distortion} For all large enough $k\geqslant 1$, 
   \[\lambda_k:=\sup_{x,y\in \Omega_\eta} \left|d_{g_k}(x,y)-d_{g_{rd}}(x,y)\right|\leqslant \Psi(k^{-1}, \eta).\]
   Therefore, we also have $0\leqslant h\leqslant \Psi(k^{-1}, \eta)$ and $0\leqslant \overline h\leqslant\Psi(k^{-1}, \eta)$.
\end{lem}

\begin{proof}
Fix any $x,y\in\Omega_\eta$. We first prove that \[d_{g_k}(x,y)-d_{g_{rd}}(x,y)\leqslant \Psi(k^{-1}, \eta).\] To do so, let $\gamma$ be a minimizing $g_{rd}$ geodesic in $\mathbb S^n$ connecting $x$ to $y$, which may certainly leave $\Omega_{rd,\eta}$. Let $\tilde\gamma$ be the piecewise smooth curve from $x$ to $y$ contained in the closure of $\Omega_{rd,\eta}$ obtained by replacing the single connected portion of $\gamma$ outside $\Omega_{rd,\eta}$ with an intrinsically minimizing great circle arc in $\partial\Omega_{rd,\eta}$. This yields a piecewise smooth curve in $\Omega_\eta$ and thus in $\Omega_{k,\eta}$ which we continue to denote as $\tilde\gamma$. By the $C^0$ Cheeger--Gromov convergence of Lemma \ref{C0CheegerGromov} (applied on, say, $\Omega_{\eta/2}$), we have that \[d_{g_k}(x,y)\leqslant L_{g_k}(\tilde\gamma)\leqslant L_{g_{rd}}(\tilde\gamma)+\Psi(k^{-1})\leqslant L_{g_{rd}}(\gamma)+\Psi(\eta)+\Psi(k^{-1})=d_{g_{rd}}(x,y)+\Psi(k^{-1}, \eta).\] Next we prove the opposite inequality \[d_{g_{rd}}(x,y)-d_{g_k}(x,y)\leqslant \Psi(k^{-1}, \eta)\] by fixing a $g_k$ geodesic $\gamma: [0,1]\to \mathbb S^n$ from $x$ to $y$. This curve $\gamma$ may just as well leave $\Omega_{k, \eta}$, but since it must begin and end in $\Omega_{k,\eta}$ there is a maximal set of times of the particular form $\mathcal I:=[0,t_1)\cup(t_2, 1]\subset [0,1]$ so that $\gamma':=\gamma\vert_\mathcal{I}\subset\Omega_{k,\eta}$ \footnote{In fact, by using a cut and paste procedure as above aided by the generalization of Clairaut's Relation to general warped products (see eg. \cite{WarpedClairaut}), it is possible to show that a minimizing $g_k$ geodesic can have at most a single connected arc in each of the two connected components of $\mathbb S^n\setminus\Omega_{k,\eta}$.}. We simply replace the entire portion of $\gamma$ between $\gamma(t_1)$ and $\gamma(t_2)$ with an intrinsically minimizing great circle arc in $\partial\Omega_{rd,\eta}$ to similarly obtain a new curve $\tilde\gamma$ lying in the closure of $\Omega_{k,\eta}$. We therefore estimate as above that for all large $k\geqslant 1$, \begin{align*} d_{g_k}(x,y)=L_{g_k}(\gamma)\geqslant L_{g_k}(\gamma')\geqslant L_{g_{rd}}(\gamma')&-\Psi(k^{-1})\\ &\geqslant L_{g_{rd}}(\tilde\gamma)-\Psi(\eta)-\Psi(k^{-1})\geqslant d_{g_{rd}}(x,y)-\Psi(k^{-1}, \eta),\end{align*} where we have used the fact that the added portion in $\partial\Omega_\eta$ has round length less than $\pi\eta$. \end{proof}

Moving on, we establish convergence of the various volume quantities appearing in the estimates of Theorem \ref{LakzianSormani}: 

\begin{lem}[Volume Convergence]\label{volumeconv}
    For all $k\geqslant 1$ large enough, we have that 
    \begin{enumerate}
        \item $\lvert\mathrm{Vol}_{g_k}^n(\Omega_{k,\eta})-\mathrm{Vol}_{g_{rd}}^n(\Omega_{rd,\eta})\rvert\leqslant \Psi(k^{-1})$
        \item $\mathrm{Vol}^n_{g_k}(\mathbb S^n\setminus\Omega_{k,\eta})\leqslant \Psi(\eta).$ 
        \item $\mathrm{Vol}^{n-1}_{g_k}(\partial\Omega_{k,\eta})\leqslant\Psi(\eta)$.
    \end{enumerate}
        In particular, $\lvert \mathrm{Vol}^n_{g_k}(\mathbb S^n)-\mathrm{Vol}^n_{g_{rd}}(\mathbb S^n)\rvert\leqslant \Psi(k^{-1})$. 
\end{lem}

\begin{proof}Clearly the full volume convergence follows from (1) and (2) by taking $\eta>0$ arbitrarily small. (1) and (3) are implied directly by the $C^0$ Cheeger--Gromov convergence of Lemma \ref{C0CheegerGromov}, so it just remains to establish (2). To do so, we recall formula \ref{volformula} for the volume tensor \[d\mathrm{Vol}^n_{g_k}=\frac{r^{n-1}}{V_{k,-}(r)^{1/2}}d\mathcal{L}^1(r)\otimes d\mathrm{Vol}^{n-1}_{g_{rd}}\] valid on the open hemisphere $\mathbb S^n_-$, and analogously on $\mathbb S^n_+$. We may thus estimate \begin{align*} 
    \mathrm{Vol}^n_{g_k}\left([0,\eta]\times\mathbb S^{n-1}\right)&=n\omega_n\int_0^\eta\frac{r^{n-1}}{V_{k,-}(r)^{1/2}}dr\\ 
    &\leqslant n\omega_n\eta^{n-1}\int_0^\eta\frac{1}{V_{k,-}(r)^{1/2}}dr\\
    &\leqslant n\omega_n\eta^{n-1}D=\Psi(\eta).
\end{align*} Indeed, the integral in the penultimate line is the arc length of a segment of a meridian of $(\mathbb S^n, g_k)$ starting from a pole. Seeing as though every meridian gives a minimizing path from one pole to the other (while also realizing the diameter of $(\mathbb S^n, g_k)$--see the proof of Lemma 2.1 in \cite{PTW}), we arrive at the final bound. The other component of $\mathbb S^n\setminus\Omega_{k,\eta}$ enjoys an analogous estimate, so we may conclude. 
\end{proof} 

At last, putting together Lemmas \ref{C0CheegerGromov}, \ref{DU}, \ref{distortion}, \ref{volumeconv} and Theorem \ref{LakzianSormani} yields: \[\mathrm{d}_{\mathcal{VIF}}\left((\mathbb S^n, g_k),(\mathbb S^n, g_{rd})\right)\leqslant \Psi(k^{-1}, \eta).\] Taking $\eta\searrow 0$ arbitrarily small and sending $k\to\infty$ thereby contradicts the assumption that the manifolds $(\mathbb S^n, g_k)$ remain bounded away from $(\mathbb S^n, g_{rd})$ in the $\mathcal{VIF}$ distance, establishing Theorem \ref{main}. 
\end{proof}

\begin{proof}[Proof of Theorem \ref{GHSurgery}]
    As in the proof of Theorem \ref{main}, for the sake of contradiction we would begin with a fixed $\delta_0>0$ and a sequence of would-be counterexamples $(\mathbb S^n, g_k)$ with the added property that for no smooth $Z_k\subset\mathbb S^n$ with at most two connected components satisfying \[\mathrm{Vol}^n_{g_k}(Z_k)+\mathrm{Vol}^{n-1}_{g_k}(\partial Z_k)\leqslant \delta_0\] is it true that $\mathrm{d}_{GH}((\mathbb S^n\setminus Z_k, g_k), (\mathbb S^n, g_{rd}))<\delta_0$. Set $Z_{k,\eta}=\mathbb S^n\setminus\overline{\Omega_{k,\eta}}$ (See Figure \ref{fig:badset}). Since \[\mathrm{d}_\mathcal{H}\left((\mathbb S^n\setminus\overline{\Omega_{rd,\eta}}, g_{rd}),(\mathbb S^n, g_{rd})\right)\leqslant\Psi(\eta),\] Lakzian--Sormani's Theorem \ref{LakzianSormani} together with Lemmas \ref{C0CheegerGromov}, \ref{DU}, \ref{distortion}, \ref{volumeconv} yield \begin{align*}
        &\mathrm{d}_{GH}\left((\mathbb S^n\setminus Z_{k,\eta}, g_k),(\mathbb S^n, g_{rd})\right)\leqslant \Psi(k^{-1}, \eta)\\
        &\mathrm{Vol}^n_{g_k}(Z_{k,\eta})+\mathrm{Vol}_{g_k}^{n-1}(\partial Z_{k,\eta})\leqslant \Psi(\eta: k^{-1}).
    \end{align*} Taking $\eta\searrow 0$ arbitrarily small and sending $k\to\infty$ thereby contradicts the assumption that the manifolds $(\mathbb S^n\setminus Z_{k,\eta}, g_k)$ must remain bounded away from $(\mathbb S^n, g_{rd})$ in the Gromov--Hausdorff distance, establishing Theorem \ref{GHSurgery}. 
\end{proof}

\begin{figure}
    \centering
    \includegraphics[width=0.4\linewidth]{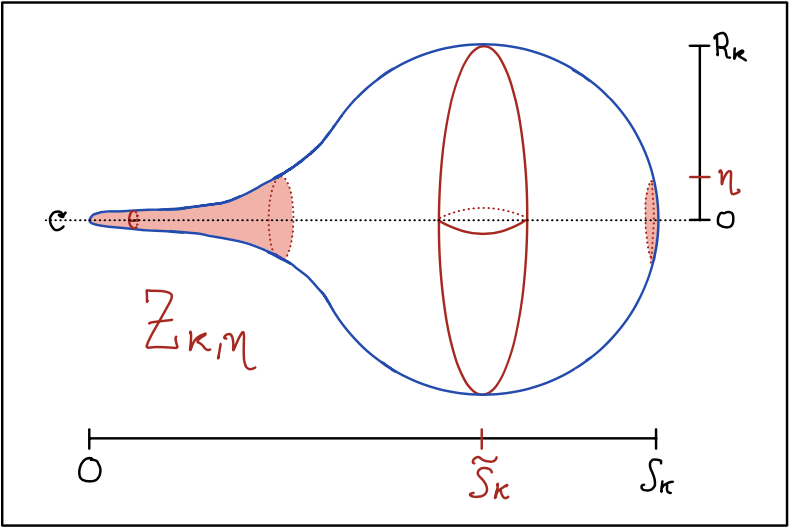}
    \caption{The ``bad set'' $Z_{k,\eta}$ which is surgically removed to obtain convergence.}
    \label{fig:badset}
\end{figure}

Finally, we establish the last result of our paper: 

\begin{proof}[Proof of Theorem \ref{GHRicci}]

We argue just as above under the new assumption of $\mathrm{Ric}\geqslant0$, where the key difference is the concavity of the warping functions $f_k(s)$ on $[0,D_k]$. This allows us to use Proposition \ref{diameter} to give an a priori diameter upper bound for the metrics $g_k$, removing the diameter assumption of Theorem \ref{GHSurgery}. This also allows us the rephrase the strong $\mathrm{MinA}$ condition as a condition on $W_g$.

To establish the result using what have so far (as in the proofs of Theorems \ref{main} and \ref{GHSurgery}), all that remains to be established is the following lemma: 

\begin{lem}[Hausdorff Convergence of the Subregions]\label{hausdorffsubregion} 
For all large enough $k\geqslant 1$, we have \[\mathrm{d}_\mathcal{H}((\Omega_{k,\eta},g_k), (\mathbb S^n, g_k))\leqslant\Psi(k^{-1},\eta) \qquad\text{and}\qquad \mathrm{d}_\mathcal{H}((\Omega_{rd,\eta},g_{rd}), (\mathbb S^n, g_{rd}))\leqslant \Psi(\eta). \]
\end{lem}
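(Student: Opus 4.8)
The plan is to observe that $\mathbb S^n\setminus\Omega_{k,\eta}$ is a union of two polar caps whose intrinsic size is forced to be $\Psi(\eta)$ precisely because $\mathrm{Ric}_{g_k}\geqslant 0$, while for the round sphere the corresponding statement is an elementary computation.

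First I would reduce the $g_k$-statement to a one-dimensional estimate. Since $\Omega_{k,\eta}$ sits inside $(\mathbb S^n,g_k)$ as a subset, we have $\mathrm{d}_\mathcal{H}((\Omega_{k,\eta},g_k),(\mathbb S^n,g_k))=\sup_{z\in\mathbb S^n}d_{g_k}(z,\overline{\Omega_{k,\eta}})$, and this supremum is attained on the complement. Writing $(\mathbb S^n,g_k)$ in $(s,\theta)$ coordinates on $[0,S_k]\times\mathbb S^{n-1}$, where $f_k$ increases from $0$ to $R_k$ on $[0,\tilde S_k]$ and decreases back to $0$ on $[\tilde S_k,S_k]$ (this monotonicity is exactly what the proof of Lemma \ref{oneminsurf} in the $\mathrm{Ric}\geqslant0$ case establishes, $\tilde S_k$ being the unique maximum of $f_k$), this complement is the disjoint union of the two closed caps $[0,\sigma_k^-]\times\mathbb S^{n-1}$ and $[\sigma_k^+,S_k]\times\mathbb S^{n-1}$, where $f_k(\sigma_k^\pm)=\eta$ and, for $k$ large, $0<\sigma_k^-<\tilde S_k<\sigma_k^+<S_k$ (using $\max f_k=R_k\geqslant 1-k^{-1}>\eta$ by Lemma \ref{Rk}). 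Since $ds^2\leqslant g_k$, the outward meridian arc from a point $(s,\theta)$ in the south cap to the boundary slice $\{\sigma_k^-\}\times\mathbb S^{n-1}\subseteq\overline{\Omega_{k,\eta}}$ has $g_k$-length $\sigma_k^--s\leqslant\sigma_k^-$, and similarly on the north cap; so it suffices to prove $\sigma_k^-\leqslant\Psi(\eta)$ and $S_k-\sigma_k^+\leqslant\Psi(\eta)$.

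The main step is this last bound, and it is where $\mathrm{Ric}_{g_k}\geqslant 0$ is used. The Ricci formula of \Cref{sec: back} forces $f_k''\leqslant 0$, so $f_k$ is concave; being concave and passing through $(0,0)$ and $(\tilde S_k,R_k)$, it lies above the chord, i.e. $f_k(s)\geqslant (R_k/\tilde S_k)\,s$ on $[0,\tilde S_k]$. Evaluating at $s=\sigma_k^-$ gives $\sigma_k^-\leqslant(\tilde S_k/R_k)\,\eta$. The $\mathrm{Ric}\geqslant 0$ hypothesis also lets us invoke Proposition \ref{diameter} to obtain an a priori bound $S_k=\mathrm{diam}_{g_k}(\mathbb S^n)\leqslant D$, and $R_k\geqslant 1-k^{-1}\geqslant\tfrac12$ for $k\geqslant 2$, so $\sigma_k^-\leqslant 2D\eta=\Psi(\eta)$. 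The identical chord argument on $(\mathbb S^n_+,g_k)$, where $f_k$ is concave and decreases from $R_k$ to $0$ on $[\tilde S_k,S_k]$ (so $f_k(s)\geqslant R_k(S_k-s)/(S_k-\tilde S_k)$), gives $S_k-\sigma_k^+\leqslant 2D\eta=\Psi(\eta)$. Combining, $\mathrm{d}_\mathcal{H}((\Omega_{k,\eta},g_k),(\mathbb S^n,g_k))\leqslant 2D\eta=\Psi(\eta)$, which in particular is $\Psi(k^{-1},\eta)$.

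For the round sphere the bound is immediate: in the fixed coordinates introduced before the lemma, $\mathbb S^n\setminus\Omega_{rd,\eta}$ is the pair of caps $\{r\leqslant\eta\}$ near the two poles, and the substitution $r=\sin t$ turns $\tfrac{dr^2}{1-r^2}+r^2g_{rd}$ into $dt^2+\sin^2 t\,g_{rd}$, so each cap is $\{0\leqslant t\leqslant\arcsin\eta\}$ and lies within $g_{rd}$-distance $\arcsin\eta$ of $\partial\Omega_{rd,\eta}$; hence $\mathrm{d}_\mathcal{H}((\Omega_{rd,\eta},g_{rd}),(\mathbb S^n,g_{rd}))\leqslant\arcsin\eta=\Psi(\eta)$. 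The only genuine content is the chord bound in the previous paragraph: it is the sole place the Ricci hypothesis enters, and without concavity a cap could be a long thin spine of intrinsic diameter comparable to $D$ rather than $\Psi(\eta)$ — which is precisely why Theorem \ref{GHSurgery} has to excise the caps by hand.
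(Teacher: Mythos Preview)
Your proof is correct and follows the same overall plan as the paper: reduce to bounding the $s$-width of each polar cap, then use concavity of $f_k$ (from $\mathrm{Ric}_{g_k}\geqslant 0$) to force this width small. The difference lies in how the concavity is exploited. The paper uses that $f_k'$ is nonincreasing together with the fundamental estimate of Lemma~\ref{vestimate} to pin down $f_k'(s_k)=V_{k,-}(\eta)^{1/2}\to\sqrt{1-\eta^2}$, and then integrates $\eta=\int_0^{s_k}f_k'\geqslant s_k f_k'(s_k)$ to obtain $s_k\leqslant\Psi(\eta:k^{-1})$. You instead use the chord inequality $f_k(s)\geqslant(R_k/\tilde S_k)s$, which needs only Lemma~\ref{Rk} and the a~priori diameter bound from Proposition~\ref{diameter}, yielding the uniform-in-$k$ bound $\sigma_k^-\leqslant 2D\eta$. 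Your route is slightly more self-contained (it does not call on Lemma~\ref{vestimate}) and gives a bound independent of $k$; the paper's route gives a sharper asymptotic constant but at the cost of invoking the metric estimates. Either argument suffices, and your closing remark correctly identifies why concavity is essential here.
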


\begin{proof}
    The second estimate is obvious, so it only remains to consider the first. Seeing as though $(\Omega_{k,\eta},g_k)\subset(\mathbb S^n, g_k)$, it suffices to show that a $\Psi(k^{-1},\eta)$ open neighborhood of $(\Omega_{k,\eta},g_k)$ in $(\mathbb S^n, g_k)$ contains all of $(\mathbb S^n, g_k)$. 

    To show this, we consider $(s,\theta)$ coordinates for the complement of $\Omega_{k,\eta}$, where we recall that $s$ is the $g_k$ distance from the pole at $s=0$ to the point with coordinates $(s,\theta)$. Without loss of generality, let us show the estimate for the connected component of $\mathbb S^n\setminus\Omega_{k,\eta}$ written in coordinates as \[\left([0,s_k]\times\mathbb S^{n-1}, g_k=ds^2+f_k(s)^2g_{rd}\right),\] where $s_k$ is the unique parameter less than $\tilde S_k$ where $\eta=f_k(s_k)$. Seeing as though $f_k(0)=0$, $f_k(s_k)=\eta$, $f_k''(s)\leqslant 0$ on $[0, s_k]$, and $f_k'(s_k)=V_k(\eta)^{1/2}\to V_{rd}(\eta)^{1/2}=\sqrt{1-\eta^2}>0$, we see directly from integration that \[\eta=f_k(s_k)=\int_0^{s_k} f_k'(\xi)d\xi\geqslant s_kf_k'(s_k)\geqslant s_k\left(\sqrt{1-\eta^2}-\Psi(k^{-1})\right)\] for all large enough $k\geqslant 1$. Therefore, $0<s_k\leqslant \Psi(\eta: k^{-1})$, telling us that the diameter of each connected component of the ``missed'' region $\mathbb S^n\setminus\Omega_{k,\eta}$ can be made arbitrarily small by taking $k\geqslant 1$ large enough and sending $\eta\searrow 0$. The Hausdorff distance estimate thus follows.     
\end{proof} At long last, adding Lemma \ref{hausdorffsubregion} to the prior sm\"org\r{a}sbord of Lemmas \ref{C0CheegerGromov}, \ref{DU}, \ref{distortion}, \ref{volumeconv}, applying Lakzian--Sormani's Theorem \ref{LakzianSormani}, and taking $\eta\searrow 0$ yields \[\mathrm{d}_{GH}\left((\mathbb S^n, g_k),(\mathbb S^n, g_{rd})\right)\leqslant \Psi(k^{-1}).\] Sending $k\to\infty$ thereby contradicts the assumption that the manifolds $(\mathbb S^n, g_k)$ remain bounded away from $(\mathbb S^n, g_{rd})$ in the Gromov--Hausdorff distance, establishing Theorem \ref{GHRicci}.
\end{proof}

\printbibliography
\end{document}